\newcommand{\Span}{{\rm span}}
\newcommand{\ad}{{\rm ad}}
\newcommand{\rank}{{\rm rank}}
\newcommand{\Null}{{\rm nullity}}
\newcommand{\0}{\mathbf{0}}
\newcommand{\x}{\mathbf{x}}
\newcommand{\y}{\mathbf{y}}
\newcommand{\z}{\mathbf{z}}
\newcommand{\ie}{{\em i.e.,} }
\newcommand{\eg}{{\em e.g.,} }
\DeclareMathOperator{\lcm}{lcm}
\newtheorem{thm}{Theorem}[section]
\newtheorem{cor}[thm]{Corollary}
\newtheorem{lem}[thm]{Lemma}
\newtheorem{prop}[thm]{Proposition}
\theoremstyle{definition}
\newtheorem{defn}[thm]{Definition}
\newtheorem{exm}[thm]{Example}
\newtheorem{rem}[thm]{Remark}
\numberwithin{equation}{section}
\def\Blem {\begin{lem}}
\def\Elem {\end{lem}}
\def\be {\begin{equation}}
\def\ee {\end{equation}}
\def\ba {\begin{eqnarray}}
\def\ea {\end{eqnarray}}
\def\bes {\begin{equation*}}
\def\ees {\end{equation*}}
\def\bas {\begin{eqnarray*}}
\def\eas {\end{eqnarray*}}
\def\bpr {\begin{proof}}
\def\epr {\end{proof}}
    \newcommand*{\Scale}[2][4]{\scalebox{#1}{$#2$}}%
    \newcommand{\Sum}[2]{\Scale[0.79]{\displaystyle\,\sum_{#1}^{#2}}}
\date{}
\begin{document}

\baselineskip=18pt
\renewcommand {\thefootnote}{ }

\begin{center}
\leftline{}
\vspace{-0.500 in}
{\Large \bf Further elements on hypernormal forms of non-resonant double Hopf singularities } \\ [0.3in]

{\large Majid Gazor\(^{a, b},\) Boumediene Hamzi\(^c\), and Ahmad Shoghi\(^{a, \dag}\)\footnote{$^\dag\,$Corresponding author. Phone: (98-31) 33913634; Fax: (98-31) 33912602;
Email: ahmad.shoghi@alumni.iut.ac.ir.  }}

\vspace{0.105in} {\small {\em \({}^{a}\)Department of Mathematical Sciences, Isfahan University of Technology,
\\[-0.5ex]
Isfahan 84156-83111, Iran.  Email: mgazor@iut.ac.ir and ahmad.shoghi@alumni.iut.ac.ir.\\[0.5ex] \(^b\)School of Mathematics, Institute for Research in Fundamental Sciences (IPM), \\[-0.5ex]
P.O. Box: 19395-5746, Tehran, Iran\\[0.5ex]
\({}^{c}\)Department of Computational and  Mathematical Sciences, Caltech, Pasadena/CA, USA. Email: boumediene.hamzi@gmail.com. }}

\vspace{0.05in}

\noindent
\end{center}

\vspace{-0.10in}

\baselineskip=16pt

\:\:\:\:\ \ \rule{5.88in}{0.012in}

\begin{abstract}
In this paper, we deal with hypernormal forms of non-resonant double Hopf singularities. We investigate the infinite level normal form classification of such singularities with nonzero radial cubic part. We provide a normal form decomposition of normal form vector fields in terms of planar-rotating and planar-radial vector fields. These facilitate the pattern recognition and analysis of the corresponding generalized homological maps. This paper is the first instance of  the normal form classification for generic non-resonant double Hopf singularities without structural symmetry.
\end{abstract}

\vspace{0.10in}

\noindent {\it Keywords:} \ Double Hopf singularity; Normal forms; Planar-radial vector fields.

\vspace{0.10in} \noindent {\it 2010 Mathematics Subject Classification}:\, Primary: 34C20, 34A34, 16W5; Secondary: 68U99.

\noindent \rule{5.88in}{0.012in}

\vspace{0.2in}

\section{Introduction}

Normal form theory and center manifold theory provide us with two essential reduction techniques for the stability analysis and bifurcation control of nonlinear systems in the vicinity of a non-hyperbolic equilibrium. The center manifold is an invariant manifold of the differential (difference) system where the reduction of the differential on that gives rise to a reduction of the state space dimension. The computation of the center manifold leads to a quasilinear partial differential (nonlinear functional) equation which is not typically easy to solve. A low-degree approximation of the reduced system on the center manifold usually suffices for stability and bifurcation control analysis of the original system. Normal form theory facilitate an effective tool of simplifying the reduced systems on the center manifolds; \eg see \cite{AlgabaSIADS,GazorSadri,GazorSadriBog,GazorShoghiCIMP,GazorShoghiHarmonic,GazorShoghiMusic} and the references therein.

For  nonlinear systems with control bifurcations (see \cite{Krenercb}), a similar approach was used for the analysis and stabilization of systems with one or two uncontrollable modes in continuous and discrete-time \cite{Ka98,HBPhd,Krenercb,bh1,bh2,bh3,bh4,bh5,bh6,bh7,bh8,bh9,bh10,bh11,bh12,bh13,bh14,bh15,bh16,bh17,bh18,bh19,bh20,bh21,bh22,bh23,bh24,bh26}. This approach was generalized to systems with arbitrary number of uncontrollable modes by introducing the {\it Controlled Center Dynamics} in continuous time and discrete-time \cite{bh6,bh2}, as well as the {\it Controlled Center Systems} \cite{bh1,bh5}. The Controlled Center Dynamics and the Controlled Center Systems are reduced order control systems whose stabilizability properties determine the stabilizability properties of the full order system. The approach based on the Controlled Center Dynamics and the Controlled Center Systems can also be viewed as reduction techniques for some classes of controlled differential (difference) equations. A stabilizing controller can be designed based on the reduced order control system in normal form.

The classical way of finding normal forms of double Hopf singularities is to apply changes of coordinates to simplify the Taylor expansion of the vector field up to a given order via tuning the image of the homological map. Then, the remaining terms in the simplified system are \emph{resonant terms}. However, among the limitations of this approach when  applied to  problems in real life is that  the relations between the original parameters of the system and unfolding parameters are not easy to find. Finding simpler normal forms allows us to analyze dynamical systems more effectively. This is particularly important for the bifurcation analysis of degenerate cases and systems with structural symmetries. Note that the classical approach does not necessarily remove all possible terms from the Taylor series  expansion. In other words, the classical normal forms are not necessarily the simplest normal forms. The  simplest normal forms are unique representatives of the equivalent classes of differential systems under the group action of near-identity changes of state variables.  This gives rise to the normal form classification of the differential systems.

 The simplest normal forms of singular differential systems have an extensive literature of more than three decades \cite{Murd04,Ushiki,MurdBook,GaetaFurther} with alternative names such as the infinite level normal forms or unique normal forms; \eg see \cite{GazorShoghiCIMP,GazorMokhtariInt,GazorMokhtari,YuLeung03,YuBog,wlhj,GazorYuSpec,baidersanders}. Most of the results belong to two and three dimensional singularities; \eg see \cite{AlgabaHopfZ,YuTriple,YuHopfZero,YuBog,wlhj,Wang3DIJBC2017a,LiZhangWang14,KokubuWang,baidersanders,ChenNF,chendora,WangChenHopfZ,Algaba30,GaetaFurther,
GaetaPoincare,GaetaMathPhys}. For nonlinear control systems, a first step was taken in \cite{HamziCharNormalF} where inner-product normal forms were generalized to nonlinear control systems. There are very few published results on four dimensional singularities where they are not the simplest normal forms; \eg see \cite{2HopfNFLeung,Knobloch86,Yu2Hopf}. This paper provides the first results on the simplest normal forms of generic non-resonant double Hopf singularity without any structural symmetry. As for the bifurcation control is concerned, efficient changes of time rescaling and also the consideration of parametric differential systems are necessary; \eg see
\cite{GazorMoazeni,GazorMokhtari,GazorMokhtariInt,GazorSadri,GazorSadriBog,GazorYuSpec,GazorYuFormal,YuLeung03}.

\pagestyle{myheadings} \markright{{\footnotesize {\it  \hspace{2.3in} {\it Non-resonant double Hopf  singularity }}}}

{Let \(\mathcal{G}\) be a Lie subalgebra of four dimensional polynomial vector fields. Here, we define the Lie bracket by \([v, w]= vw-wv,\) where the vector fields are also considered differential operators. We intend to normalize a given vector field \(V\in \mathcal{G}\) of non-resonant double Hopf singularity. In this paper, a vector field \(V\) is associated with the ODE \(\frac{d\mathbf{y}}{dt}= V(\mathbf{y})\) and use vector fields and their associated differential systems interchangeably. Lie algebra structure here is relevant through the space of transformation generators. Each nonlinear vector field, say \(X(\y)\) with \(X(\0)=\0\) and no linear term in its Taylor expansion, is considered a transformation generator where it generates a nonlinear near-identity change of state variables. Near-identity here means that its linearization at the origin is the identity map. In fact, \(X(\y)\) generates the time-one map flow  \(\y=\y(1, \x)\) of the initial value problem
\be\label{TransGen}\dot{\y}(t, \x)=X(\y(t, \x)),\quad \y(0, \x)=\x;
\ee \eg see \cite{GazorYuSpec,KokubuWang,Wang3DIJBC2017a,MurdBook}. Therefore, the transformation \(\y=\y(1, \x)\) sends the vector field \(V(\y)\) into
\begin{eqnarray}\label{Exp}
 &\exp \ad_X V=\sum_{j=0}^{\infty}\frac{1}{j!}{\ad_X}^jV= V+ \ad_XV+ \frac{1}{2} \ad_X(\ad_XV)+ \cdots&
\end{eqnarray} in terms of the new coordinates \(\x\) where \(\ad_X V:= [X, V].\) This gives rise to a new  updated differential system as
\begin{eqnarray*}
 &\frac{d\mathbf{x}}{dt}= \exp \ad_X V|_{\mathbf{y}=\mathbf{x}}=\sum_{j=0}^{\infty}\frac{1}{j!}{\ad_X}^jV= V+ \ad_XV+ \frac{1}{2} \ad_X(\ad_XV)+ \cdots|_{\mathbf{y}=\mathbf{x}}.&
\end{eqnarray*} The idea is to apply a sequence of transformation generators like \(X\) to simplify \(V\) as much as possible. In practical computations, we usually deal with polynomial vector fields (as a finite Taylor expansion of a smooth system), and the vector field  \(\exp \ad_X V\) is truncated up to a certain desired grade.

Let \(\delta\) be a grading function, \ie the space \(\mathcal{G}\) is a \(\mathbb{Z}_{\geq 0}\)-graded Lie algebra. This means that \(\mathcal{G}= \sum_{k=0}^{\infty} \mathcal{G}_k\) (direct sum of \(k\)-grade homogeneous spaces) and \([\mathcal{G}_m, \mathcal{G}_k]\subseteq \mathcal{G}_{m+k}\) for all \(m, k\). Hence, we can write \(V=\sum_{k=0}^{\infty} V^{}_k\) where \(V_k\in \mathcal{G}_k\) for every \(k\in \mathbb{N}\cup\{0\}.\) Now we inductively define the generalized homological operators. Let
\bas &d^{n, 1}: \mathcal{G}_n \rightarrow \mathcal{G}_n\;\hbox{ be defined by } \; d^{n, 1}(X_{n}):= [ X_{n}, V^{}_0] \; \hbox{ for any grade } n.& \eas  These are the classical homological maps; \eg see \cite{MurdBook}. However, we call \(d^{n, 1}\) by a first level (generalized) homological map. The domain of this map corresponds with transformation generators while their image is called the first level removable spaces \(\mathcal{R}^{n,1}\). In other words, the transformation generators from the domain of the generalized homological maps can be used to simplify (projected) parts of \(V\) that belong to the removable spaces \(\mathcal{R}^{n,1}\). Therefore, the only remaining terms in the normal form belong to complement spaces \(\mathcal{C}^{n,1}\) that are complement to the removable spaces.

The idea of hyper-normalization is to use transformation generators in \(\ker d^{n-1, 1}:= \{X_{n-1}\in \mathcal{G}_{n-1}| [X_{n-1}, V_0]=0\}\) to enlarge the removable space \({\rm im}\, d^{n, 1}.\) Given this idea, we inductively define \(s\)-th level generalized homological map \(d^{n,s}: \ker(d^{n-1,s-1})\times \mathcal{G}_n \rightarrow \mathcal{G}_n\) for any \(2\leq s\leq n\) by
\bas
&d^{n,s}(X_{n-s+1}, \ldots, X_{n-1}, X_{n}):=[X_n, V_0]+[X_{n-1}, V_1]+\cdots+ [X_{n-s+1}, V_{s-1}],&
\eas where
\((X_{n-s+1}, X_{n-s+2}, \cdots, X_{n-1})\in \ker(d^{n-1,s-1})\); also see \cite[page 1015]{GazorYuSpec} and \cite{MurdBook,Murd04,SandersSpect}. Denote the image of the \(s\)-th level generalized homological map by \(\mathcal{R}^{n,s}\), \ie \(\mathcal{R}^{n,s}:= {\rm im}\, d^{n, s}\) is a \(s\)-level removable space. Transformation generators from \(\ker d^{n, 1}\) do not change terms of grade less than \(n\) as their Lie product with \(V_0\) is zero while they enlarge the removable spaces, \({\rm im} \,d^{n, 1}\subseteq {\rm im} \,d^{n, 2}.\) This is also true for \(\ker(d^{n-1,s-1})\) and generally, we have \({\rm im} \,d^{n, r}\subseteq {\rm im} \,d^{n, s}\)  for \(r\leq s.\) A complement image of \(d^{n,s}\) is denoted by \(\mathcal{C}^{n, s},\) \ie \(\mathcal{R}^{n,s}\oplus \mathcal{C}^{n,s}= \mathcal{G}_n.\) The spaces \(\mathcal{C}^{n, s}\) and \(\mathcal{R}^{n,s}\) are called the \(s\)-level complement and removable spaces, accordingly. The symbol \(\oplus\) stands for the direct sum of the spaces, \ie \(\mathcal{R}^{n,s}\cap \mathcal{C}^{n,s}=\{\0\}\) and for every \(V_n\in\mathcal{G}_n,\) there are
\bes w_n\in \mathcal{R}^{n,s}\;\hbox{ and } \; u_n\in \mathcal{C}^{n,s}\; \hbox{ so that } \; V_n=w_n+u_n.\ees


\begin{defn}
We call a vector field \(V=\sum_{k=0}^{\infty} V_k\) as a \(s\)-level normal form when all grade-homogenous vector fields \(v_k\) belong to the corresponding \(s\)-level complement spaces, \ie \(V_k\in \mathcal{C}^{k, s}\) for all \(k.\) When \(V_k\in \mathcal{C}^{k, k}\) for all \(k\geq 1,\) the vector field \(V=\sum_{k=0}^{\infty} V_k\) is called an infinite level normal form.
\end{defn}
\(N\)-Jet of a vector field \(v\) is defined as Taylor expansion of  \(v\) at the origin up to degree \(N\) and we denote it by \(J^N v\).
By \cite[Lemma 4.3]{GazorYuSpec}, there exist permissible changes of state variables to send the vector field \(V\) into its \(N\)-jet \(s\)-level normal form \(V^{(s)}=\sum_{n=0}^{N} V^{(s)}_n\) where \(V^{(s)}_n\in \mathcal{C}^{n, s}\).
Here, \(V^{(s)}_i= V^{(i)}_i\) for all \(i< s.\) We remark that the computation of \(s\)-level normal forms is usually facilitated by updating the generalized homological maps in each level of normalization, \ie \(d^{n,s}(X_{n-s+1}, \ldots, X_{n-1}, X_{n}):= \sum^{s-1}_{i=0}[ X_{n-i}, V^{(s-1)}_i]\); \eg see \cite{wlhj,SandersSpect,GazorYuSpec}.}

\paragraph{Statement of the problem} In this paper, we are concerned about the simplest (infinite level) normal form computation of the differential system
\begin{align}\label{pre-classical}
\textstyle\frac{d\mathbf{z}}{dt}=A\mathbf{z}+\mathcal{H}(\mathbf{z}), \quad \mathcal{H}(\0)=\0,
\end{align} so that \(\mathbf{z}:=(z_1, \overline{z}_1, z_2, \overline{z}_2)^\intercal\) and the diagonal matrix \(A\) is defined as
\begin{eqnarray*}
\textstyle A:=\mathrm{diag}\left(I\omega_1, -I\omega_1, I\omega_2, -I\omega_2\right),
\end{eqnarray*}
where \(\omega_1\) and \(\omega_2\) are non-resonant angular frequencies, \ie $\frac{\omega_1}{\omega_2}\notin\mathbb{Q}.$ For our convenience, we interchangeably use the notations
\bes (f_1, f_2, f_3, f_4)^\intercal\hbox{ and } f_1\frac{\partial}{\partial z_1}+f_2\frac{\partial}{\partial \overline{z}_1}+f_3\frac{\partial}{\partial z_2}+f_4\frac{\partial}{\partial \overline{z}_2}.\ees This gives rise a one-to-one correspondance between differential systems and vector fields, where we use vector fields and differential systems interchangeably. The vector field \(\mathcal{H}(\mathbf{z})\) represents nonlinear vector field in \(\mathbf{z}\) whose components are polynomials in \(\mathbb{C}[\mathbf{z}]\). Here, \(\mathbb{C}[\mathbf{z}]\) denotes the ring of polynomials over \(\mathbb{C}.\) Therefore, differential system \eqref{pre-classical} is a {\it non-resonant double Hopf singularity}. Despite literature of more than several decades old on the normal form classification of singularities, to the best of our knowledge, there have been no results on non-resonant double Hopf singularity with no structural symmetry beyond Poincare normal forms; \eg see \cite{Yu2Hopf,2HopfNFLeung,Knobloch86} for the classical results. There are near identity polynomial changes of state variables (see Theorem \ref{Thm1}) to transform \eqref{pre-classical} into a \(N\)-degree first level normal form given by
\begin{eqnarray}\label{FirstLevel}
\frac{d\mathbf{z}}{dt}=\sum_{m+n=0}^N  |z_1|^{2m} |z_2|^{2n} A_{m, n}\mathbf{z},
\end{eqnarray} where
\be\label{aijbij} A_{m, n}:=\mathrm{diag}\left( a^1_{m, n}+Ib^1_{m, n}, a^1_{m, n}-Ib^1_{m, n}, a^2_{m, n}+Ib^2_{m, n}, a^2_{m, n}-Ib^2_{m, n} \right)\hbox{ and } A_{0, 0}=A.\ee Here, the number \(N\) stands for an arbitrary large natural number. This is to skip formulations from formal power series into the polynomials of arbitrary large degrees. Define
\begin{equation}\label{E&Theta}
 \textstyle \mathscr{P}^k_{0,0}:= z_k\frac{\partial}{\partial z_k}+w_k\frac{\partial}{\partial w_k}, \quad \mathscr{R}_{0, 0}^k:= Iz_k\frac{\partial}{\partial z_k}-Iw_k\frac{\partial}{\partial w_k},\quad 
\end{equation} and \(w_k:= \overline{z}_k\) for  \(k= 1, 2.\) The vector fields \(\mathscr{P}^k_{0,0}\) and \(\mathscr{R}_{0, 0}^k\) are respectively called planar-radial (planar-Eulerian) and rotating vector fields.
These generate a Lie algebra defined by
\begin{equation}\label{L}
\mathcal{G}:=\Span \left\{{\rho_1}^{2m}{\rho_2}^{2n}\mathscr{P}^k_{0, 0}, {\rho_1}^{2m}{\rho_2}^{2n}\mathscr{R}_{0, 0}^k\,|\, m,n\in \mathbb{N}\cup\{0\}, \, k\in \{1, 2\}\right\}
\end{equation} for \(\rho_k:=|z_k|,\) \(k=1, 2\). Hence, vector fields associated with normal  form systems \eqref{FirstLevel} for every non-resonant double Hopf singularity fall into the Lie algebra \(\mathcal{G}.\) In fact, the vector field in equation \eqref{FirstLevel} is given as
\ba\label{Eq1.7}
&\frac{d\mathbf{z}}{dt}=\sum_{k=1}^{2}\sum_{m+n=0}^N  {\rho_1}^{2m}{\rho_2}^{2n}(a^k_{m, n}\mathscr{P}^k_{0, 0}+ b^k_{m,n}\mathscr{R}_{0, 0}^k), \quad a^{1}_{0,0}=a^{2}_{0,0}=0, b^1_{0,0}= \omega_1, b^2_{0,0}= \omega_2.&
\ea The use of planar-radial and planar-rotating vector fields substantially facilitates the pattern recognition of the generalized homological maps.

\begin{exm}[Further normalization of a Poincare normal form] In this example, we simplify a Poincare normal form system; also see \cite{GaetaPoincare,GaetaMathPhys,GaetaFurther}. Since only resonant terms of odd degrees may remain in Poincare normal forms, we assume that a Poincare normal form of a differential system in complex coordinates up to degree 6 is given by
  \begin{eqnarray}  \label{Exm0}
&\frac{d z_1}{dt}=Iz_1+z_1|z_1|^2+z_1|z_1|^4-2z_1|z_1|^2|z_2|^2,  \quad \frac{d z_2}{dt}=I\sqrt{3}z_2+z_2|z_1|^4, \;\frac{d w_2}{dt}= \frac{d \overline{z_2}}{dt},\; \frac{d w_1}{dt}= \frac{d \overline{z_1}}{dt},&
\end{eqnarray}
where \(w_k:=\overline{z}_k\) for \(k=1, 2,\) \(\omega_1:=1,\) and \(\omega_2:=\sqrt{3}.\) Since levels of normal forms correspond with the grading structure, we define grade of each homogeneous polynomial vector field as half of the standard degree minus one. Let \(V_0:=\left(Iz_1, -Iw_1, I\sqrt{3}z_2, -I\sqrt{3}w_2\right)^\intercal,\) \(V_1:= \left(z_1|z_1|^2, w_1|z_1|^2, 0, 0\right)^\intercal,\) \(V_2:=\left(z_1|z_1|^4-2z_1|z_1|^2|z_2|^2, w_1|z_1|^4-2w_1|z_1|^2|z_2|^2, z_2|z_1|^4,  w_2|z_1|^4\right)^\intercal.\)  Hence, \(V=V_0+V_1+V_2.\) Our goal is to simplify 5-degree polynomials \(V_2\) as much as possible through cubic part \(V_1\) of the normal form system. Let
\begin{equation*}
 X(\mathbf{z}):=-\left({z_1|z_2|^2}, {w_1|z_2|^2}, \frac{z_2|z_1|^2}{2}, \frac{w_2|z_1|^2}{2}\right)^\intercal.
\end{equation*} Then, we have
\be\label{Linear} \ad_{X}\, V_0=0, \quad\hbox{ \ie} X\in \ker d^{3,1}
\ee and
\begin{eqnarray}\label{Cubic}
&\ad_{X}\,V_1=-\left(-2z_1|z_1|^2|z_2|^2, -2w_1|z_1|^2|z_2|^2, z_2|z_1|^4, w_2|z_1|^4\right)^\intercal;&
\end{eqnarray}
see Section \ref{sec2} for how to choose \(X(\mathbf{z}).\) Equation \eqref{Linear} guarantees that transformation generator \(X\) does not create terms that had already been simplified using the linear part \(V_0\) of the system in Poincare normalization step. This, together with equation \eqref{Cubic}, certifies the simplification of terms appearing on the right hand side of equation \eqref{Cubic}. Let \(\mathbf{z}(t, \mathbf{u})\) be solution of the initial value problem \(\frac{d \mathbf{z}(t, \mathbf{u})}{dt}=X(\mathbf{z}(t, \mathbf{u})),\) \(\mathbf{z}(0, \mathbf{u})=\mathbf{u}.\) The near identity changes of state variables \(\mathbf{z}=\mathbf{z}(1, \mathbf{u})\) (time-one map of the flow \(\mathbf{z}(t, \mathbf{u})\)) sends the differential equation \eqref{Exm0} up to degree 6 to the following equation
\begin{eqnarray}&\nonumber\frac{d \mathbf{u}}{dt}=\exp\ad_{X(\mathbf{u})}\left(V_0(\mathbf{u})+V_1(\mathbf{u})+V_2(\mathbf{u})\right)=V_0(\mathbf{u})+V_1(\mathbf{u})+V_2(\mathbf{u})+\ad_{X(\mathbf{u})}V_1(\mathbf{u})+ {\sc h.o.t.}&\\\label{Exm0_1}
&=V_0(\mathbf{u})+V_1(\mathbf{u})+\left(u_1|u_1|^4, v_1|u_1|^4, 0,  0\right)^\intercal+ {\sc h.o.t.}.&
\end{eqnarray}
Here, \(\mathbf{u}:=(u_1, v_1, u_2, v_2)\) stands for the new state variables. We ignore higher order terms. Then, Equation \eqref{Exm0_1} illustrates the second level form of Equation \eqref{Exm0}, \ie \(s=2\). Since there is no four degree terms in the system, there is no removable terms within its five-degree terms via any changes of state variables. Hence, Equation \eqref{Exm0_1} is the simplest five degree polynomial normal form.
\end{exm}

Throughout this paper, we assume that there is a non-zero cubic assumption in the first level normal forms \eqref{Eq1.7} given by
\begin{equation}\label{GenCon}
\left|a^1_{0, 1}\right|+\left|a^2_{0, 1}\right|+\left|a^1_{1, 0}\right|+\left|a^2_{1, 0}\right|\neq 0.
\end{equation} 
These four numbers represent cubic coefficients of the Poincare normal form. If all these coefficients were to be zero, the quartic normal form system only included rotational vector fields with zero radial components. It is often assumed that these highly degenerate cases rarely occur in real life applications, unless there is a structural symmetry within the system that enforces these specific degenerate cases. Treatment of these degenerate cases is beyond the scope of this paper.

\section{Normal forms and Lie structure constants }\label{sec2}

This section is devoted to the first level normal forms and an algorithmic approach for deriving the cubic normal form coefficients; also see \cite{Knobloch86,2HopfNFLeung,Yu2Hopf}.

\begin{thm}\label{Thm1}
For any differential system \eqref{pre-classical}, there are near-identity changes of state variables such that they transform system \eqref{pre-classical} into a \(2N+1\)-degree truncated first level normal form given in complex coordinates by
\ba\label{classicalNF}
\hspace{-1 cm}&\dot{z}_1=\sum^N_{m+n= 0}(a^1_{m,n}+Ib^1_{m,n}) z_1^{m+1}w_1^m z_2^n w_2^n,\quad \dot{z}_2=\sum^N_{m+n= 0}(a^2_{m,n}+Ib^2_{m,n}) z_1^{m}w_1^m z_2^{n+1} w_2^n,&
\ea
 where \({w}_k=\overline{{z}_k}\) , \(b^k_{m,n}, a^k_{m,n}\in \mathbb{R},  b^k_{0,0}= \omega_k\) and \(a^k_{0, 0}=0\) for \(k=1, 2.\)
\end{thm}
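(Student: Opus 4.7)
The plan is to apply the standard semisimple Poincare normal form procedure grade by grade, exploiting the diagonal structure of $V_0$ to make the resonance analysis explicit. Fix the grading so that a polynomial vector field with components of homogeneous degree $2n+1$ has grade $n$, so that $V_0 \in \mathcal{G}_0$ and any $X_n \in \mathcal{G}_n$ with $n\geq 1$ perturbs $V$ only at grades $\geq n$ when applied through the time-one flow in \eqref{TransGen}--\eqref{Exp}. A direct computation shows that $\ad_{V_0}$ is diagonal on the monomial basis, with
\[
[V_0,\, z_1^{\alpha_1} w_1^{\beta_1} z_2^{\alpha_2} w_2^{\beta_2}\partial_{z_1}]
= I\bigl((\alpha_1-\beta_1-1)\omega_1 + (\alpha_2-\beta_2)\omega_2\bigr)\, z_1^{\alpha_1} w_1^{\beta_1} z_2^{\alpha_2} w_2^{\beta_2}\partial_{z_1},
\]
and analogous formulas with appropriate $\pm 1$ shifts in the three other partial-derivative directions. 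The non-resonance hypothesis $\omega_1/\omega_2\notin \mathbb{Q}$ forces the eigenvalue to vanish only when the integer coefficients of $\omega_1$ and $\omega_2$ each vanish, which in the $\partial_{z_1}$ case gives $\alpha_1=\beta_1+1$ and $\alpha_2=\beta_2$, i.e., resonant monomials of the form $z_1^{m+1} w_1^m z_2^n w_2^n\partial_{z_1}$, and similarly for the three other directions.

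Next, choose $\mathcal{C}^{n,1}:=\ker \ad_{V_0}|_{\mathcal{G}_n}$ (spanned by the four resonant families just described) and $\mathcal{R}^{n,1}:=\mathrm{im}\,\ad_{V_0}|_{\mathcal{G}_n}$, which form a direct sum by semisimplicity of $V_0$. Inductively in $n$, assuming $V_0,\ldots,V_{n-1}$ already lie in their complement spaces, split $V_n = u_n + w_n$ with $u_n\in \mathcal{C}^{n,1}$, $w_n\in \mathcal{R}^{n,1}$, and use invertibility of $\ad_{V_0}$ on the non-resonant eigenspaces to find $X_n\in \mathcal{G}_n$ satisfying $d^{n,1}(X_n)=-w_n$. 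The time-one map of the flow of $X_n$ then replaces $V_n$ by $V_n + d^{n,1}(X_n) = u_n$ at grade $n$, leaves grades $<n$ untouched, and only perturbs grades $>n$, which will be absorbed in subsequent steps. After iterating from $n=1$ through $n=N$, the truncated vector field has degree $2N+1$ and each grade-$n$ component lies in $\mathcal{C}^{n,1}$, giving exactly the polynomial structure of \eqref{classicalNF}.

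The final item is the reality constraint: since \eqref{pre-classical} corresponds to a real vector field written in the complex pair $(z_k,\overline{z}_k)$, the coefficient of a monomial in $\partial_{z_k}$ equals the complex conjugate of the coefficient of its conjugate monomial in $\partial_{w_k}$. Because $\ad_{V_0}$ intertwines complex conjugation, each preimage $X_n$ can be chosen to respect the same conjugacy, so reality is preserved through the entire induction, and the resonant coefficients automatically pair as $a^k_{m,n} \pm I b^k_{m,n}$ with $a^k_{m,n},b^k_{m,n}\in \mathbb{R}$. The condition $A_{0,0}=A$ then fixes $a^k_{0,0}=0$ and $b^k_{0,0}=\omega_k$. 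The substantive work is the explicit resonance identification; the only real subtlety is the bookkeeping of the reality structure through the induction, which follows cleanly from the intertwining of $\ad_{V_0}$ with complex conjugation combined with the semisimplicity already in hand.
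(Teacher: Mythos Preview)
Your argument is correct and is exactly the standard semisimple Poincar\'e normalization that the paper defers to the literature; the paper's own proof consists solely of the sentence ``The proof is well-known'' together with citations to \cite{Knobloch86,Yu2Hopf,LiZhangWang14,MurdBook}, so you have supplied more detail than the paper itself.

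One small point worth tightening: your grading assignment ``homogeneous degree $2n+1$ has grade $n$'' covers only odd-degree components, whereas $\mathcal{H}$ in \eqref{pre-classical} is an arbitrary nonlinear polynomial vector field and may contain even-degree terms. The fix is cosmetic: use the standard grading $\delta=\text{degree}-1$ for this first-level step. Your eigenvalue formula already shows that the resonance constraints (e.g.\ $\alpha_1-\beta_1=1$, $\alpha_2=\beta_2$ for the $\partial_{z_1}$ direction) force every resonant monomial to have odd total degree, so at even degrees $\ad_{V_0}$ is invertible on the whole homogeneous space and those terms are eliminated outright. With that adjustment the induction runs over all grades and the proof is complete.
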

\begin{proof} The proof is well-known. For example see \cite{Knobloch86,Yu2Hopf,LiZhangWang14,MurdBook}.
\end{proof}

From now on, we treat the vector fields of type \eqref{classicalNF}. The family of vector fields given by \eqref{classicalNF} constitutes a Lie subalgebra for the family of all double Hopf singularities. Since the linear part of the vector field belongs to the centralizer of the Lie subalgebra, it will have no further influence on the normalization  beyond the first level step. Thus, the remaining results in this paper readily hold for the resonant and non-resonant double Hopf singularities given by  \eqref{classicalNF}.
For each \(m,n\in\mathbb{Z}_{\geq 0}\!:=\! \mathbb{N}\cup\{0\},\) define
\ba\label{def-E-and-Theta}
\mathscr{P}^{k}_{m,n}:= |z_1|^{2m}|z_2|^{2n}  \mathscr{P}_{0,0}^k,  \hbox{ and } \quad
\mathscr{R}_{m,n}^k:= |z_1|^{2m}|z_2|^{2n} \mathscr{R}_{0,0}^k, \hbox{ for } k=1, 2,
\ea and let
\begin{equation}\label{L}
\mathcal{G}:=\Span\left\{\Theta+\sum (a^{k}_{i, j} \mathscr{P}^{k}_{i, j}+b^{k}_{i, j} \mathscr{R}_{i, j}^k) \,\big|\, a^{k}_{i, j}, b^{k}_{i, j}\in \mathbb{R}, k=1, 2,\, i,j\in\mathbb{Z}_{\geq 0}, i+j\geq 1\right\}.
\end{equation}
\begin{prop}
The vector space \(\mathcal{G}\) defined in \eqref{L} over the real numbers \(\mathbb{R}\) constitutes an infinite dimensional real Lie algebra whose structure constants follow (for all \(k, l, m, n\in \mathbb{Z}_{\geq 0}\))
\begin{eqnarray}
&{\left[\mathscr{P}^{1}_{m,n} ,\mathscr{P}^{1}_{i,j}\right]\! =\!2(i\!-\!m)\mathscr{P}^1_{m+i,n+j},\qquad
\left[\mathscr{P}^{2}_{m,n} ,\mathscr{P}^{2}_{i, j}\right]\! =\!2(j\!-\!n)\mathscr{P}^{2}_{m+i,n+j},}\nonumber&\\
&\left[\mathscr{P}^{1}_{i, j}, \mathscr{P}^{2}_{m,n}\right]\! =\!2m\mathscr{P}^{2}_{m+i,n+j}\!-\!2j\mathscr{P}^{1}_{m+i,n+j},\qquad \left[\mathscr{P}^{1}_{i, j}, \mathscr{R}_{m,n}^{k}\right]\! =\!2m \mathscr{R}_{m+i,n+j}^{k},& \label{StructureConstants}\\\nonumber
&\left[\mathscr{P}^{2}_{i, j}, \mathscr{R}_{m,n}^{k}\right]\!=\!2n \mathscr{R}_{m+i,n+j}^k,\qquad \left[\mathscr{R}_{m,n}^{k},\mathscr{R}_{i, j}^{l}\right]\! =\!0,\quad \hbox{  for }\; k,l=1, 2.&
\end{eqnarray}
\end{prop}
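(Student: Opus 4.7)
The plan is to verify the six structure-constant identities by direct Leibniz-rule computation on the four base operators $\mathscr{P}^1_{0,0}, \mathscr{P}^2_{0,0}, \mathscr{R}^1_{0,0}, \mathscr{R}^2_{0,0}$, and then to conclude closure of $\mathcal{G}$ under the bracket. The Jacobi identity is automatic, since all generators are ordinary first-order differential operators on $\mathbb{C}^4$, so their commutators live in the Lie algebra of vector fields where Jacobi is standard.

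First I would write each generator in the factored form $\mathscr{P}^k_{m,n} = |z_1|^{2m}|z_2|^{2n}\,\mathscr{P}^k_{0,0}$ and $\mathscr{R}^k_{m,n} = |z_1|^{2m}|z_2|^{2n}\,\mathscr{R}^k_{0,0}$, and invoke the Leibniz identity $[fX, gY] = fg[X,Y] + fX(g)Y - gY(f)X$ for vector fields $X, Y$ and smooth scalars $f, g$. This reduces every bracket in the proposition to (i) a base bracket among the four operators above and (ii) two derivative evaluations on monomials of the form $|z_1|^{2p}|z_2|^{2q}$.

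Next I would prove two supporting facts. The first is that the four base operators pairwise commute: for $k\neq l$ they involve independent coordinate pairs, and for $k=l$ both $\mathscr{P}^k_{0,0}$ and $\mathscr{R}^k_{0,0}$ are simultaneously diagonalized by the monomial basis $\{z_k^a\overline{z}_k^b\}$ with eigenvalues $a+b$ and $I(a-b)$, so all four base brackets vanish. The second fact is the evaluation of derivatives: $\mathscr{P}^1_{0,0}(|z_1|^{2p}|z_2|^{2q}) = 2p\,|z_1|^{2p}|z_2|^{2q}$, $\mathscr{P}^2_{0,0}(|z_1|^{2p}|z_2|^{2q}) = 2q\,|z_1|^{2p}|z_2|^{2q}$, and $\mathscr{R}^k_{0,0}(|z_1|^{2p}|z_2|^{2q}) = 0$, the last because $|z_k|^{2p} = z_k^p\overline{z}_k^p$ has matched degrees in $z_k$ and $\overline{z}_k$ so the two contributions $Ip$ and $-Ip$ cancel.

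With base brackets vanishing, Leibniz collapses every generator bracket to $fX(g)Y - gY(f)X$, and the two facts above supply all coefficients. The $\mathscr{P}^1$--$\mathscr{P}^1$ bracket becomes $(2i-2m)\mathscr{P}^1_{m+i,n+j}$; the $\mathscr{P}^1$--$\mathscr{P}^2$ relation is asymmetric because $\mathscr{P}^1_{0,0}$ reads only the $|z_1|^{2m}$-degree of $g$ while $\mathscr{P}^2_{0,0}$ reads only the $|z_2|^{2j}$-degree of $f$, yielding $2m\,\mathscr{P}^2_{m+i,n+j} - 2j\,\mathscr{P}^1_{m+i,n+j}$. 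The $\mathscr{P}^k$--$\mathscr{R}^l$ brackets lose the $\mathscr{R}$-on-monomial term by the annihilation fact, leaving only $2m$ or $2n$, and the $\mathscr{R}^k$--$\mathscr{R}^l$ brackets vanish outright. Closure of $\mathcal{G}$ then follows since every right-hand side lies in the generating set, and bilinearity extends the identities to the whole space. The main \emph{obstacle} is purely bookkeeping, namely keeping track of the two indices $k=1,2$ and of the factor $I$ carried through the $\mathscr{R}^k$ cases; no genuine mathematical difficulty arises.
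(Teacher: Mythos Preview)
Your proposal is correct and follows essentially the same approach as the paper: the paper also reduces everything to the Leibniz-type identity $[fX,gY]=\langle\nabla g,X\rangle f\,Y-\langle\nabla f,Y\rangle g\,X$ for the base operators $\mathscr{P}^k_{0,0},\mathscr{R}^k_{0,0}$ (implicitly using that these pairwise commute) and then reads off the structure constants from the definitions. Your write-up is in fact more explicit than the paper's, which simply states the three Leibniz identities and says ``the proof simply follows.''
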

\begin{proof}
Using the definition for the Lie bracket \([v, w]:= vw-wv,\) we have
\ba\nonumber
\left[f(\z)\mathscr{P}^k_{0, 0}, g(\z)\mathscr{P}^l_{0, 0} \right]&=&\langle \nabla g, \mathscr{P}^k_{0,0}\rangle f(\z) \,\mathscr{P}^l_{0,0}-\langle\nabla f, \mathscr{P}^l_{0,0}\rangle g(\z) \,\mathscr{P}^k_{0,0},
\\\nonumber
\left[f(\z)\mathscr{R}^k_{0, 0}, g(\z)\mathscr{R}^l_{0, 0} \right]&=&\langle\nabla g, \mathscr{R}^k_{0,0}\rangle f(\z) \,\mathscr{R}^l_{0,0}-\langle\nabla f, \mathscr{R}^l_{0,0}\rangle g(\z)\, \mathscr{R}^k_{0,0},
\\\label{StructGrad}
\left[g(\z)\mathscr{P}^k_{0,0}, f(\z)\mathscr{R}^l_{0, 0}\right]&=&\langle\nabla f, \mathscr{P}^k_{0,0}\rangle g(\z) \,\mathscr{R}^l_{0,0}-\langle\nabla g, \mathscr{R}^l_{0,0}\rangle f(\z) \,\mathscr{P}^k_{0,0},\ k,l =1, 2.
\ea Then, the proof simply follows notation \eqref{def-E-and-Theta}.
\end{proof}


In the second level normalization step, we intend to inductively simplify terms with grade \(n\) from \(v^{(1)}\). We first derive reduced representative squared matrices for the second level generalized homological maps, where the complement images of the generalized homological maps can be concluded from the representative squared matrices.
Recall that every  differential system \eqref{classicalNF} corresponds with a vector field \(v^{(1)}\) from the Lie algebra \(\mathcal{G}\) and is written as
\begin{eqnarray}\label{eq2}
&v^{(1)}:=\sum^2_{k=1}\sum^N_{m+n=0}b^k_{m,n}\mathscr{R}_{m,n}^k+\sum^2_{k=1}\sum^N_{m+n= 1}a^{k}_{m,n}\mathscr{P}^{k}_{m,n}, &
\end{eqnarray} for some coefficients \(a^{k}_{m,n}, b^k_{m,n}\in \mathbb{R}\) and sufficiently large natural number \(N.\) Now define the grading function $\delta$ by
\begin{align}\label{grade}
\delta\left(\mathscr{P}^k_{m,n} \right) =m+n, \ \delta\left(\mathscr{R}_{m,n}^k \right) =m+n,\ k=1,2.
\end{align}

\begin{lem} A reduced representation matrix for the second level generalized homological map \(d^{n, 2}\) with respect to the vector space bases in \eqref{def-E-and-Theta}
follows
\begin{equation}\label{An1}
\mathcal{A}_{n, 1}:=-\begin{bmatrix}
\mathcal{A}_{n,1}^{0, 0}& \0                       & \cdots   & \0 \\
\mathcal{A}_{n,1}^{0, 1}& \mathcal{A}_{n,1}^{1, 0} & \ddots   & \vdots\\
\0                      & \mathcal{A}_{n,1}^{1, 1} & \ddots   & \0 \\
\vdots                  & \ddots                       &\ddots& \mathcal{A}_{n,1}^{n-1, 0}\\
\0                      & \cdots                   & \0       & \mathcal{A}_{n,1}^{n-1, 1}\\
\end{bmatrix}.
\end{equation} Here, the block matrices \(\mathcal{A}_{n,1}^{j, 0}\) and  \(\mathcal{A}_{n,1}^{j, 1}\) for \(0\leq j\leq n-1\) are defined by
\begin{eqnarray}
&&\hspace{-0.5 cm}\label{BAn0}\hspace{-1 cm}\Scale[0.93]{\mathcal{A}_{n,1}^{j, 0} :=\! 2\!\begin{bmatrix}
j a_{0,1}^1\!+\!(n\!-\!j\!-\!1) a_{0,1}^2 & -a_{0,1}^1& 0 & 0 \\
0 &  j a_{0,1}^1\!+\! (n\!-\!j\!-\!2)a_{0,1}^2 &0 & 0 \\
0 & -b_{0,1}^1 &  j a_{0,1}^1\!+\! (n\!-\!j\!-\!1)a_{0,1}^2  & 0\\
0 & -b_{0,1}^2 & 0 & j a_{0,1}^1\!+\! (n\!-\!j\!-\!1)a_{0,1}^2
\end{bmatrix}}\!,\\
&&\hspace{-0.5 cm}\label{BAn1}\hspace{-1 cm}\Scale[0.9]{\mathcal{A}_{n,1}^{j, 1}\! :=\! 2\!\begin{bmatrix}
(j\!-\!1) a_{1,0}^1\!+\!(n\!-\!j\!-\!1)a_{1,0}^2      &0  & 0 & 0 \\
-a_{1,0}^2 &  j a_{1,0}^1\!+\! (n\!-\!j\!-\!1)a_{1,0}^2   &0  & 0 \\
-b_{1,0}^1 & 0 &  j a_{1,0}^1\!+\! (n\!-\!j\!-\!1)a_{1,0}^2  & 0\\
-b_{1,0}^2 & 0 & 0 &  j a_{1,0}^1\!+\! (n\!-\!j\!-\!1)a_{1,0}^2
\end{bmatrix}}.
\end{eqnarray}
\end{lem}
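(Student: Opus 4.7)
The plan is to reduce $d^{n,2}$ to a single-argument linear map on $\mathcal{G}_{n-1}$ and then read off its matrix by direct expansion using the structure constants \eqref{StructureConstants}.

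First, I would observe that $V_0=\omega_1\mathscr{R}_{0,0}^1+\omega_2\mathscr{R}_{0,0}^2$ is a grade-$0$ purely-rotational element, and the last two lines of \eqref{StructureConstants} show that every bracket $[\mathscr{P}^k_{m,n'},\mathscr{R}_{0,0}^l]$ and $[\mathscr{R}^k_{m,n'},\mathscr{R}_{0,0}^l]$ vanishes (the first because the coefficient $2m$ or $2n$ is evaluated at a zero index, the second by the stated commutativity of the $\mathscr{R}$'s). Consequently $d^{n-1,1}$ is identically zero on $\mathcal{G}_{n-1}$, so $\ker d^{n-1,1}=\mathcal{G}_{n-1}$ and the $X_n$-slot in $d^{n,2}(X_{n-1},X_n)=[X_n,V_0]+[X_{n-1},V_1]$ contributes nothing. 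Hence $d^{n,2}$ effectively reduces to the linear map $X_{n-1}\mapsto[X_{n-1},V_1]$ from $\mathcal{G}_{n-1}$ (of real dimension $4n$) into $\mathcal{G}_n$ (of real dimension $4(n+1)$), which accounts for the $4(n+1)\times 4n$ shape of the claimed matrix $\mathcal{A}_{n,1}$.

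Next, I would fix ordered bases by grouping basis elements with equal first multi-index. For $\mathcal{G}_{n-1}$, list the basis as $n$ quadruples indexed by $p=0,1,\dots,n-1$, namely $(\mathscr{P}^1_{p,n-1-p},\mathscr{P}^2_{p,n-1-p},\mathscr{R}^1_{p,n-1-p},\mathscr{R}^2_{p,n-1-p})$, and similarly $n+1$ quadruples indexed by $p'=0,\dots,n$ for $\mathcal{G}_n$. Since the cubic part $V_1$ contains only terms with multi-index $(1,0)$ or $(0,1)$, the structure constants show that bracketing a source vector at index $j$ against a $(1,0)$-term raises the first multi-index by one and lands at image index $j+1$, while bracketing against a $(0,1)$-term raises the second multi-index and lands at image index $j$. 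No other couplings occur, which immediately forces the bi-diagonal block pattern of $\mathcal{A}_{n,1}$: a diagonal block at position $(j,j)$, built from the $(0,1)$-part of $V_1$ and identified as $\mathcal{A}_{n,1}^{j,0}$, and a sub-diagonal block at position $(j+1,j)$, built from the $(1,0)$-part of $V_1$ and identified as $\mathcal{A}_{n,1}^{j,1}$.

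To extract the individual entries, I would set $X_{n-1}$ equal to each of the four source basis vectors at index $j$, substitute $q=n-1-j$, and apply \eqref{StructureConstants} one bracket at a time. Most entries arise from a single bracket and give coefficients of the shape $2(j\,a^1_{\ast}+q\,a^2_{\ast})$ or $2\,b^k_{\ast}$ for the appropriate indices. The one delicate point is the cross-bracket $[\mathscr{P}^1_{i,j'},\mathscr{P}^2_{m,n'}]=2m\mathscr{P}^2_{m+i,n'+j'}-2j'\mathscr{P}^1_{m+i,n'+j'}$: it carries both a $\mathscr{P}^2$ and a $\mathscr{P}^1$ summand, so both $\mathscr{P}^1_{j,q}$ and $\mathscr{P}^2_{j,q}$ feed the first two rows of each block. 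Absorbing the overall sign of $\mathcal{A}_{n,1}=-[\cdots]$ then reproduces \eqref{BAn0} and \eqref{BAn1} entry by entry. The main obstacle is purely bookkeeping: it is easy to drop the $-2j'\mathscr{P}^1$ piece of the cross-bracket, and that very piece is what supplies the off-diagonal entries $-a^1_{0,1}$ of $\mathcal{A}_{n,1}^{j,0}$ and $-a^2_{1,0}$ of $\mathcal{A}_{n,1}^{j,1}$; verifying those two entries is the only substantive consistency check in the whole calculation.
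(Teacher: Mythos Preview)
Your proposal is correct and follows essentially the same route as the paper: both arguments first observe that $V_0=\Theta$ commutes with all of $\mathcal{G}$ so that $d^{n,2}$ collapses to $X_{n-1}\mapsto[X_{n-1},V_1]$, and then compute the matrix of this map in the ordered bases \eqref{def-E-and-Theta} via the structure constants \eqref{StructureConstants}, the paper writing out the full bracket expansion \eqref{Lie-Bracket} while you organize the same computation block by block. Your explanation of why the bi-diagonal block pattern arises (the $(0,1)$-part of $V_1$ fixes the first index while the $(1,0)$-part increments it) is a nice conceptual gloss that the paper leaves implicit, but the underlying calculation is identical.
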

\bpr
The number of terms that are typically simplified in the second level normal forms are equal to \(\rank\,d^{n, 2}\), \ie
\({\rm im}\, d^{n, 1}= \{0\}.\) More precisely, we have
\begin{equation*}
\ker d^{n-1, 1}=\{x\in \mathcal{G}_{n-1}\,|\, [x, \Theta]=0\}=\Span\{\mathscr{P}^k_{j, n-1-j}, \mathscr{R}^k_{j, n-1-j}, k=1, 2, 0\leq j\leq n-1\}=\mathcal{G}_{n-1}.
\end{equation*}
Further, \(d^{n, 2}(X_{n-1}, X_{n})=[X_{n-1}, v_1^{(1)}]+[X_{n}, v_0^{(1)}]=[X_{n-1}, v_1^{(1)}]\) where \((X_{n-1}, X_{n})\in \mathcal{G}_{n-1}\times  \mathcal{G}_{n}.\)
Here we use the method of {\it indeterminate coefficients} to solve the generalized homological equations. Let
\begin{eqnarray}\label{Xn}
  &X_{n-1}=\sum_{j=0}^{n-1}(\alpha^1_j\mathscr{P}^1_{j, n-1-j}+\alpha^2_j\mathscr{P}^2_{j, n-1-j}+\beta^1_j\mathscr{R}^1_{j, n-1-j}+\beta^2_j\mathscr{R}^2_{j, n-1-j})&
\end{eqnarray}
be a \(n-1\)-grade transformation generator for normalizing the updated \(n\)-grade terms of \(v_{n}^{(1)}:=\sum_{j=0}^{n}(\hat{a}^{1}_j\mathscr{P}^1_{j, n-j}+\hat{a}^{2}_j\mathscr{P}^2_{j, n-j}+\hat{b}^{1}_j\mathscr{R}^1_{j, n-j}+\hat{b}^{2}_j\mathscr{R}^2_{j, n-j})\) in the second level normalization step. Here, the assumption is that lower grade terms \(v_{d}^{(1)}\)
(\(d<n\)) have already been simplified to \(v_{d}^{(2)}\)  for \(d<n\). In other words, the second level normal form has been partially performed upto grade \(n-1.\) For simplicity of notations throughout this paper, the notations for the updated coefficients \(\hat{a}^{1}_j,\) \(\hat{a}^{2}_j,\) \(\hat{b}^{1}_j,\) and \(\hat{b}^{2}_j\) are replaced with \(a^{1}_j,\) \(a^{2}_j,\) \(b^{1}_j,\) \(b^{2}_j\). Here, \(\alpha^1_j,\) \(\alpha^2_j,\) \(\beta^1_j\) and \(\beta^2_j\) for \(0\leq j\leq n-1\) are the real unknown constants to be determined. Thus, \(\left[X_{n-1}, \sum_{k=1}^{2} \sum_{j=0}^{1}\left(a^k_{j, 1-j} \mathscr{P}^k_{j, 1-j}+b^k_{j, 1-j}\mathscr{R}^k_{j, 1-j}\right)\right]\) is given by
\begin{eqnarray}
&\nonumber\hspace{-0.5 cm}\Sum{l, k=1}{2}\Big(\left[\alpha^l_0\mathscr{P}^l_{0, n-1}+\beta^l_0\mathscr{R}^l_{0, n-1},  a_{0, 1}^k\mathscr{P}^k_{0, 1}+b_{0, 1}^k\mathscr{R}^k_{0, 1}\right]+\left[\alpha^l_{n-1}\mathscr{P}^l_{n-1, 0} + \beta^l_{n-1}\mathscr{R}^l_{n-1, 0},  a_{1, 0}^k\mathscr{P}^k_{1, 0} + b_{1, 0}^k\mathscr{R}^k_{1, 0}\right]&\\
&\nonumber\hspace{-0.5 cm}+\left(\left[\alpha^l_0\mathscr{P}^l_{0, n-1} + \beta^l_0\mathscr{R}^l_{0, n-1},  a_{1, 0}^k\mathscr{P}^k_{1, 0} + b_{1, 0}^k\mathscr{R}^k_{1, 0}\right] + \left[\alpha^l_1\mathscr{P}^l_{1, n-2} + \beta^l_1\mathscr{R}^l_{1, n-2},  a_{0, 1}^k\mathscr{P}^k_{0, 1} + b_{0, 1}^k\mathscr{R}^k_{0, 1}\right]\right) &\\
&\nonumber\hspace{-0.5 cm}+\left( \left[\alpha^l_1\mathscr{P}^l_{1, n-2} + \beta^l_1\mathscr{R}^l_{1, n-2},  a_{1, 0}^k\mathscr{P}^k_{1, 0} + b_{1, 0}^k\mathscr{R}^k_{1, 0}\right] + \left[\alpha^l_2\mathscr{P}^l_{2, n-3} + \beta^l_2\mathscr{R}^l_{2, n-3},  a_{0, 1}^k\mathscr{P}^k_{0, 1} + b_{0, 1}^k\mathscr{R}^k_{0, 1}\right] \right) + \cdots&\\
&\hspace{-0.9 cm}\Scale[0.95]{+\left(\left[\alpha^l_{n-2}\mathscr{P}^l_{n-2, 1} \!+\! \beta^l_{n-2}\mathscr{R}^l_{n-2, 1},  a_{1, 0}^k\mathscr{P}^k_{1, 0}\! +\! b_{1, 0}^k\mathscr{R}^k_{1, 0}\right]\! +\! \left[\alpha^l_{n-1}\mathscr{P}^l_{n-1, 0}\!+\! \beta^l_{n-1}\mathscr{R}^l_{n-1, 0},  a_{0, 1}^k\mathscr{P}^k_{0, 1} \!+\! b_{0, 1}^k\mathscr{R}^k_{0, 1}\right]\right)}\Big).\label{Lie-Bracket}&
\end{eqnarray}
For every \(i, j, l\) and \(m\in \mathbb{Z}_{\geq 0}\), the structure constants \eqref{StructureConstants} leads to
\begin{eqnarray*}\label{LieBracket}
  &\frac{1}{2}\left[\sum_{k=1}^{2}a_{l, m}^k\mathscr{P}_{l, m}^k+b_{l, m}^k\mathscr{R}_{l, m}^k, \sum_{k=1}^{2} a_{i,j}^k\mathscr{P}_{i,j}^k+b_{i,j}^k\mathscr{R}_{i,j}^k\right]=&\\
  &\Scale[0.96]{\hspace{-0.20 cm}\sum_{k=1}^{2}\!\left(\left((ia_{l, m}^1\!+\!ja_{l, m}^2)a_{i, j}^k\!-\!(la_{i, j}^1\!+\!ma_{i, j}^2)a_{l, m}^k\right)\!\mathscr{P}^k_{i+l, j+m}\!+\!\left((ia_{l, m}^1\!+\!ja_{l, m}^2)b_{i, j}^k\!-\!(la_{i, j}^1\!+\!ma_{i, j}^2)b_{l, m}^k\right)\!\mathscr{R}^k_{i+l, j+m}\right)}.&
\end{eqnarray*}
Hence, the solution to the normalization of terms in \(v_{n}^{(1)}\) leads to linear system
\begin{equation}\label{SysEqu}
\Scale[0.97]{ \mathcal{A}_{n, 1}(\alpha_0^1, \alpha_0^2, \beta_0^1, \beta_0^2,\cdots, \alpha_{n-1}^1, \alpha_{n-1}^2, \beta_{n-1}^1, \beta_{n-1}^2)^\intercal\!+\!(a_{0, n}^1, a_{0, n}^2, b_{0, n}^1, b_{0, n}^2,\cdots, a_{n, 0}^1, a_{n, 0}^2, b_{n, 0}^1, b_{n, 0}^2)^\intercal=0},
\end{equation} for constants \(\alpha_j^k, \beta_j^k\) where \(k=1, 2, 0\leq j\leq n-1,\) and the block matrix \(\mathcal{A}_{n, 1}\) is defined by \eqref{An1}.
\epr

Since \(\mathcal{A}_{n,1}\) is a $ 4(n+1)\times 4n$ matrix, \(\rank\, \mathcal{A}_{n,1}\leq 4n.\) Recall that \(n\)-grade homogeneous terms in \(\mathcal{G}_n\) belong to the \(4(n+1)\)-dimensional linear space spanned by \({\left\{\mathscr{P}^1_{j, n-j}, \mathscr{P}^2_{j, n-j}, \mathscr{R}^1_{j, n-j}, \mathscr{R}^2_{j, n-j}| 0\leq j\leq n\right\}}.\)
Hence, for each \(n\geq 2,\) \(n\)-grade terms of \(v^{(1)}\) are not necessarily removable in the second level hyper-normalization step. From equations \eqref{An1}, \eqref{BAn0} and \eqref{BAn1}, rank of matrix \(\mathcal{A}_{n, 1}\) depends to the values for \(a_{1, 0}^1,\) \(a_{1, 0}^2,\) \(a_{0, 1}^1\) and \(a_{0, 1}^2.\) These enforce the consideration of three different cases:
\begin{enumerate}[label=\Roman*]
\item\!. \label{I} \(\frac{a_{0, 1}^1}{a_{0, 1}^2}, \frac{a_{1, 0}^1}{a_{1, 0}^2}\in \mathbb{Q},\) \(a_{0, 1}^1a_{0, 1}^2<0\) and \(a_{1, 0}^1a_{1, 0}^2<0\);
\item\!. \label{II} either \(\frac{a_{1, 0}^1}{a_{1, 0}^2}\in \mathbb{Q}^c,\) or \(\left(a_{0,1}^1a_{0, 1}^2>0, a_{1, 0}^1a_{1, 0}^2>0, \frac{a_{0, 1}^1}{a_{0, 1}^2}\in
  \mathbb{Q}^c\right)\);
\item\!. \label{III} \(a^1_{0, 1}a^2_{0, 1}a_{1, 0}^1a_{1, 0}^2=0.\)
\end{enumerate}
Here we select a complement (choice of style) for the image of the generalized homological operator \(d^{n, 2}\) based on the simplicity of computation in equation \eqref{SysEqu}.  In this regard, we may reduce the matrix \(\mathcal{A}_{n, 1}\) to an invertible \((\rank\, \mathcal{A}_{n,1}\times \rank\, \mathcal{A}_{n,1})\)-matrix by removing some of its linearly dependent rows and columns, where it is then denoted by \(\hat{\mathcal{A}}_{n, 1}\). We remark that we will not need to explicitly calculate inverse matrix for \(\hat{\mathcal{A}}_{n, 1}.\) All variables of the reduced equations of \eqref{SysEqu} are recursively computed. These will be discussed  for all three cases I, II and III.

\subsection{Case I: When \(\frac{a_{0, 1}^1}{a_{0, 1}^2}, \frac{a_{1, 0}^1}{a_{1, 0}^2}\in \mathbb{Q},\) \(a_{0, 1}^1a_{0, 1}^2<0\) and \(a_{1, 0}^1a_{1, 0}^2<0\).}\label{subseccaseI}
\begin{lem}\label{lem1}
Let \(r+s=p+q,\)
\begin{equation}\label{Conditions}
\textstyle\frac{a_{0,1}^1}{a_{0,1}^2}=-\frac{q}{p}, \frac{a_{1, 0}^1}{a_{1, 0}^2}=-\frac{s}{r}\in \mathbb{Q}, \text{\quad where\quad} p,q, r, s\in \mathbb{N}\, \text{and}\,\, \gcd(p, q)= \gcd(r, s)=1.
\end{equation}
Then, there are near-identity transformations so that the first level normal form \(v^{(1)}\) can be transformed into the \(2N+1\)-degree truncated second level normal form vector field given by
\begin{eqnarray}
&\hspace{-0.4 cm}\notag\Scale[0.92]{\Theta\!+\!\Sum{k\in\mathbb{N}_2}{}\Sum{i+j=1}{}(a^k_{i, j}\mathscr{P}^k_{i, j}\!+\!a^k_{i j}\mathscr{R}^k_{i, j})\!+\!\hspace{-0 cm}\Sum{m\in\mathbb{N}_2}{}\Sum{j\in H_1}{}\big(\delta_1 \mathscr{P}^1_{j, m(p+q)\!+\!1\!-\!j}\!+\!a^2_{j, m(p+q)\!+\!1\!-\!j}\mathscr{P}^2_{j, m(p+q)+1-j}\!+\!\Sum{k\in\mathbb{N}_2}{} b^k_{j, m(p+q)\!+\!1\!-\!j }\mathscr{R}^k_{j, m(p+q)\!+\!1\!-\!j}\big)}&\\ [-4pt]
&\nonumber\Scale[0.92]{\!+\!\Sum{k\in\mathbb{N}_2}{}(a^k_{2, 0} \mathscr{P}^k_{2, 0}\!+\!b^k_{2, 0} \mathscr{R}^k_{2, 0})\!+\!\Sum{m=2}{\infty}\,\Sum{j\in H_2}{} (\delta_2 \mathscr{P}^2_{mp, mq+2}\!+\!\Sum{k\in\mathbb{N}_2}{} (a^k_{j, m(p+q)+2-j}\mathscr{P}^k_{j, m(p+q)+2-j}\!+\! b^k_{j, m(p+q)+2-j}\mathscr{R}^k_{j, m(p+q)+2-j}))}&\\[-4pt]
&\hspace{-1 cm}\Scale[0.92]{ +a^2_{0, 2} \mathscr{P}^2_{0, 2}\!+\!\!\Sum{j\in H_3}{} (\delta_3 \mathscr{P}^2_{p, q+2}\!+\!\!\Sum{k\in\mathbb{N}_2}{} (a^k_{j, p+q+2-j}\mathscr{P}^k_{j, p+q+2-j}\!+\! b^k_{j, p+q+2-j}\mathscr{R}^k_{j, p+q+2-j}))\!+\!\!\!\Sum{m\in\mathbb{Z}_{\geq 0}}{}\Sum{k\in\mathbb{N}_2}{}\Sum{j\in H_4}{} \big(a^k_{j,0}\mathscr{P}^k_{j, 0}\!+\!b^k_{j, 0}\mathscr{R}^k_{j, 0}\big)}&\label{SecondLevel1}
\end{eqnarray}
where \(H_4:=\{n|m(p+q)+3\leq n\leq (m+1)(p+q)\}.\) Since we are dealing with truncated normal forms, all infinite sums have finite nonzero terms upto degree \(2N+1\). Further,
\bes H_1:=\{mp, mr+1\},\quad H_2:=\{m(p+q)+2\}, \quad (\delta_1, \delta_2):=(0, a^2_{mp,mq+2}) \quad\hbox{ for  } p\leq r,\ees
and \(H_1:=H_2:=\{mp\}, (\delta_1, \delta_2):=(a^1_{j, m(p+q)+1-j}, 0)\) when \(p>r\). For \(p\leq r+1,\) \(\delta_3:=a^2_{mp,mq+2}, H_3:=\{m(p+q)+2\}\) while
if \(p>r+1,\) we have \(\delta_3:=0\) and \(H_3:=\{mp\}.\)
\end{lem}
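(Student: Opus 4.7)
The plan is to analyze the block lower-bidiagonal matrix $\mathcal{A}_{n,1}$ from the previous lemma grade by grade, and identify, at each grade $n$, exactly which rows and columns become linearly dependent so that the corresponding coefficients of $v^{(1)}_n$ cannot be eliminated by any choice of the transformation generator unknowns $\alpha^k_j, \beta^k_j$ in \eqref{Xn}. The unremovable terms will be chosen as the generators of the complement $\mathcal{C}^{n,2}$, and one must then check that the collection so obtained matches the list in \eqref{SecondLevel1}.

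The first step is to exploit the bidiagonal shape of $\mathcal{A}_{n,1}$. The linear system \eqref{SysEqu} can be solved by block-forward substitution: row $j=0$ determines $(\alpha^k_0,\beta^k_0)$, row $j=1$ then determines $(\alpha^k_1,\beta^k_1)$, and so on up to row $j=n-1$. The final block row $j=n$ has no further unknowns and produces an obstruction, so in the generic non-resonant case the cokernel is $4$-dimensional and can be taken to be $\mathrm{span}\{\mathscr{P}^k_{n,0},\mathscr{R}^k_{n,0}\}_{k=1,2}$. This accounts for the $H_4$-family in \eqref{SecondLevel1}.

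Next, I would locate the resonances by asking when a diagonal pivot of $\mathcal{A}_{n,1}^{j,0}$ or $\mathcal{A}_{n,1}^{j,1}$ vanishes. Under \eqref{Conditions} one finds that the entry $ja^1_{0,1}+(n-j-1)a^2_{0,1}$ vanishes precisely when $n=m(p+q)+1$ and $j=mp$; the shifted entry $ja^1_{0,1}+(n-j-2)a^2_{0,1}$ vanishes at $n=m(p+q)+2$, $j=mp$; and, using $a^1_{1,0}/a^2_{1,0}=-s/r$ together with $r+s=p+q$, the blocks $\mathcal{A}_{n,1}^{j,1}$ become singular at $(n,j)=(m(p+q)+1,mr)$ and $(n,j)=(m(p+q)+2,mr+1)$. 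Each vanishing pivot removes one degree of freedom from the forward substitution, leaving one specific coefficient among $\{a^k_{j,n-j},b^k_{j,n-j}\}$ that cannot be killed. Identifying these coefficients gives the non-generic terms listed at resonant grades in \eqref{SecondLevel1}: the $H_1$-indexed terms at $n=m(p+q)+1$, the $H_2$- and $H_3$-indexed terms at $n=m(p+q)+2$, and the quadratic base cases $\mathscr{P}^k_{2,0}$, $\mathscr{P}^2_{0,2}$, $\mathscr{R}^k_{2,0}$, which are simply the $m=1$ specializations.

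The main obstacle — and the reason for the subcase split in the statement — is the interaction between the two resonance families when $mp$ and $mr$ (or $mr+1$) occur in the same grade-$n$ column stack. Depending on whether $p\leq r$ or $p>r$, the earlier of the two vanishing pivots (along the block-forward sweep) can still be compensated by the surviving off-diagonal contribution of the subsequent block, whereas the later one must be assigned to the complement; the analogous boundary between $p\leq r+1$ and $p>r+1$ governs the $n=m(p+q)+2$ family. Resolving this ordering carefully, and then reading off which of $\mathscr{P}^1_{\cdot,\cdot}$ versus $\mathscr{P}^2_{\cdot,\cdot}$ is the unremovable one in each subcase, is exactly what the indicators $\delta_1,\delta_2,\delta_3$ and the index sets $H_1,H_2,H_3$ encode. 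Once this bookkeeping is carried out, the listed representative in \eqref{SecondLevel1} follows.
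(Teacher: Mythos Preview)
Your strategy is the paper's: exploit the block lower-bidiagonal shape of $\mathcal{A}_{n,1}$, solve by forward/backward recursion, locate the grades and column indices at which the blocks degenerate under \eqref{Conditions}, and split on the relative order of the two resonance loci. Your identification of the resonant indices $(n,j)$ is correct.

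There is, however, one imprecision that would bite if you carried it out literally. You write that ``each vanishing pivot removes one degree of freedom''. In fact, at $n=m(p+q)+1$ and $j=mp$ the block $\mathcal{A}_{n,1}^{mp,0}$ loses three columns at once (the first, third and fourth become zero, since three of its four diagonal entries share the factor $ja_{0,1}^1+(n-j-1)a_{0,1}^2$), and likewise $\mathcal{A}_{n,1}^{mr,1}$ loses three columns. The naive count would then suggest a much larger rank drop than actually occurs. The paper's proof handles this by combining the ascending recursion \eqref{AscRec} on $0\le i\le mp-1$, the descending recursion \eqref{DecRec} on $mr+2\le i\le n$, and a bridging relation \eqref{eq5} across the window $mp\le i\le mr$; the crucial point is that the product $\prod_{i=mp}^{mr-1}(\mathcal{A}_{n,1}^{i,1})^{-1}\mathcal{A}_{n,1}^{i+1,0}$ has nonzero $(2,2)$-entry, which is proved by contradiction from $\operatorname{rank}\mathcal{A}_{n,1}=4n-2$. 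Your sketch alludes to ``the surviving off-diagonal contribution of the subsequent block'' but does not supply this step, and without it you cannot conclude that exactly the six terms $\mathscr{P}^2_{j,n-j},\mathscr{R}^1_{j,n-j},\mathscr{R}^2_{j,n-j}$ for $j\in\{mp,mr+1\}$ remain in case $p\le r$, nor that only four remain when $p>r$. Once that rank argument is in place, the rest of your outline goes through as written.
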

\begin{proof}
Let \(n=2.\) The second and ninth rows of \(\mathcal{A}_{2,1}\) are zero vectors. Thus, terms \(\mathscr{P}^2_{0, 2}\) and \(\mathscr{P}^1_{2, 0}\) are not necessarily simplified. Hence, these zero rows of matrix \(\mathcal{A}_{2,1}\) can be simply omitted. The first seven rows of the remaining matrix are linearly independent. Then, the last three rows can be neglected to obtain the invertible matrix \(\hat{\mathcal{A}}_{2, 1}\); indeed, \(\rank\,\mathcal{A}_{2,1}= \rank\,\hat{\mathcal{A}}_{2, 1}=7.\) The latter rows correspond with terms \( a^2_{2,0}\mathscr{P}^2_{2,0}, b^1_{2,0}\mathscr{R}^1_{2,0}\) and \( b^2_{2,0}\mathscr{R}^2_{2,0}.\) Thereby, the simplification of these terms are not attempted in the second level normalization step. Next, let \( \alpha^1_{0,1}:=0 \) and
\begin{eqnarray}
&\alpha^2_{0,1}= -\dfrac{a^1_{0, 2}}{2a^1_{0, 1}},\, \beta^1_{0, 1}= \dfrac{a^1_{0, 1}b^1_{0, 2}-a^1_{0, 2}b^1_{0, 1}}{2a^1_{0, 1}a^2_{0, 1}},\,
 \beta^2_{0, 1}= \dfrac{a^1_{0, 1}b^2_{0, 2}-a^1_{0, 2}b^2_{0, 1}}{2a^1_{0, 1}a^2_{0, 1}},\label{n=2Coefs1}&\\
&(\alpha^1_{1, 0}, \alpha^2_{1, 0}, \beta^1_{1, 0}, \beta^2_{1, 0})^\intercal = (\mathcal{A}_{2, 1}^{1, 0})^{-1}\left((a_{1, 1}^1, a_{1, 1}^2, b_{1, 1}^1, b_{1, 1}^2)^\intercal -\mathcal{A}_{2, 1}^{0, 1}(\alpha^1_{0,1}, \alpha^2_{0,1}, \beta^1_{0, 1},\beta^2_{0, 1})^\intercal \right).\label{n=2Coefs2}&
\end{eqnarray} Remark that the matrix \(\mathcal{A}_{2, 1}^{1, 0}\) here is invertible due to \(a_{0, 1}^1a_{0, 1}^2<0.\) By the hypothesis, we have
\begin{equation*}
a_{0, 1}^1p+a_{0, 1}^2q=a_{1, 0}^1r+a_{1, 0}^2s=0.
\end{equation*} Let \(n=m(p+q)+1=m(r+s)+1\) for a \(m\in \mathbb{N}\) and \(n>2.\) Hence, we have
\begin{equation}\label{eq3} mra_{1, 0}^1+(n-1-mr)a_{1, 0}^2=mpa_{0, 1}^1+(n-1-mp)a_{0, 1}^2=0.
\end{equation}
By the substitution of these values into matrix \(\mathcal{A}_{n, 1},\) the first, third and fourth columns of \(\mathcal{A}_{n, 1}^{mp, 0}\) and the second, third and fourth columns of \(\mathcal{A}^{mr, 1}_{n, 1}\) are zero vectors. Here, we may consider two cases:
\bes
\text{(i)}\,\, p\leq r\qquad \hbox{ and } \qquad \text{(ii)}\,\, p>r.
\ees
Case (i): \(\rank\, \mathcal{A}_{n,1}= 4n-2\) can be obtained from the column reduced echelon form for \(\mathcal{A}_{n, 1}.\) Therefore, \(4n-2\) number of terms of grade \(n\) from the first level normal form can be simplified. Here, terms \(\mathscr{P}^1_{mp, n-mp}, \mathscr{P}^1_{mr+1, n-mr-1}, \mathscr{P}^k_{j, n-j}\) and \(\mathscr{R}^k_{j, n-j}\) for \(k=1, 2, 0\leq j\leq n, j\neq mp, mr+1\) can be normalized while \(\mathscr{P}^2_{j, n-j}, \mathscr{R}^1_{j, n-j}, \mathscr{R}^2_{j, n-j}\) for \(j=mp, mr+1\) may remain in the second level normal form. The matrices \(\mathcal{A}_{n,1}^{i, 0}\) and \(\mathcal{A}_{n,1}^{j, 1}\) are invertible for \(i\neq mp\) and \(j\neq mr.\) Then for \(0\leq i\leq  mp-1,\) take an ascending recursive relation
\begin{eqnarray}\label{AscRec}
(\alpha_i^1, \alpha_i^2, \beta_i^1, \beta_i^2)^\intercal=(\mathcal{A}_{n, 1}^{i, 0})^{-1}\left((a^1_{i, n-i}, a^2_{i, n-i}, b^1_{i, n-i}, b^2_{i, n-i})^\intercal-\mathcal{A}_{n, 1}^{i-1, 1}(\alpha_{i-1}^1, \alpha_{i-1}^2, \beta_{i-1}^1, \beta_{i-1}^2)^\intercal\right),
\end{eqnarray} where \((\alpha_{-1}^1, \alpha_{-1}^2, \beta_{-1}^1, \beta_{-1}^2)=\0.\) For \(mr+2\leq i\leq n,\) the descending recursive relation
\begin{eqnarray}\label{DecRec}
(\alpha_{i-1}^1, \alpha_{i-1}^2, \beta_{i-1}^1, \beta_{i-1}^2)^\intercal=(\mathcal{A}_{n, 1}^{i-1, 1})^{-1}\left((a^1_{i, n-i}, a^2_{i, n-i}, b^1_{i, n-i}, b^2_{i, n-i})^\intercal-\mathcal{A}_{n, 0}^{i, 0}(\alpha_{i}^1, \alpha_{i}^2, \beta_{i}^1, \beta_{i}^2)^\intercal\right)
\end{eqnarray} is implemented and \((\alpha_n^1, \alpha_n^2, \beta_n^1, \beta_n^2)=0.\)
Now we take
\begin{eqnarray}\label{219}
&\alpha_{mp}^2:=-\frac{1}{2a_{0, 1}^1}\left(a_{mp, mq+1}^1-2\alpha_{mp-1}^1(a_{1, 0}^1(mp-2)+a_{1, 0}^2(mq+1))\right),&\\\label{220}
&\alpha_{mr}^1:=-\frac{1}{2a_{1, 0}^1}\left(a_{mr+1, ms}^1-2\alpha^1_{mr+1}((mr+1)a_{0, 1}^1+(ms-1)a^2_{0, 1}+\alpha^2_{mr+1}a_{0, 1}^1)\right).&
\end{eqnarray}
Furthermore, we claim that the relation
\begin{eqnarray}\label{eq5}
&\nonumber\Scale[0.92]{(\alpha_{mp}^1, \alpha_{mp}^2, \beta_{mp}^1, \beta_{mp}^2)^\intercal=(-1)^{m(r-p)}\left(\prod_{i=mp}^{mr-1}(\mathcal{A}_{n,1}^{i,1})^{-1}\mathcal{A}_{n,1}^{i+1,0}\right)(\alpha_{mr}^1, \alpha_{mr}^2,\beta_{mr}^1, \beta_{mr}^2)^\intercal}&\\
&\Scale[0.92]{+(\mathcal{A}_{n, 1}^{mp, 1})^{-1}\!\left(\sum_{j=mp+1}^{mr}(-1)^{j-mp}\!\left(\prod_{i=mp+1}^{j-1}\!\mathcal{A}_{n, 1}^{i, 0}(\mathcal{A}_{n, 1}^{i, 1})^{-1}\right)\!(a^1_{j, n-j}, a^2_{j, n-j}, b^1_{j, n-j}, b^2_{j, n-j})^\intercal\right)}&
\end{eqnarray} leads to unique solutions for \(\alpha_{mp}^1\) and \(\alpha_{mr}^2\). The argument is as follows. Given the values \(\alpha_{mp}^2\) and \(\alpha_{mr}^1\) in equations \eqref{219}-\eqref{220}, the coefficients \(\alpha_{mp}^1\) and \(\alpha_{mr}^2\) are derived from the first and second rows of equations \eqref{eq5} while the coefficients \(\beta_{mp}^1\) and \(\beta_{mp}^2\) results from the third and fourth rows of \eqref{eq5} by taking \(\beta_{mr}^1=\beta_{mr}^2:=0.\) Finally, the coefficients \((\alpha_{j}^1, \alpha_{j}^2,\beta_{j}^1, \beta_{j}^2)\) for \(mp+1\leq j\leq mr-1\) are computed by the recursive relation \eqref{AscRec}. Let
\begin{equation}\label{NullSpace1}
\mathcal{A}_{n,1}(x_0, x_1, \cdots, x_{4n-1})^\intercal=0.
\end{equation} Then, we have
\begin{eqnarray*}
x_0=x_1=\cdots=x_{4mp-1}=x_{4mp+1}=0,\quad x_{4n-1}=x_{4n-2}=\cdots=x_{4mr+4}=x_{4mr}=0,\\
\mathcal{A}_{n}^{i,1}(x_{4i}, x_{4i+1}, x_{4i+2}, x_{4i+3})^\intercal+\mathcal{A}_{n}^{i+1,0}(x_{4(i+1)}, x_{4(i+1)+1}, x_{4(i+1)+2}, x_{4(i+1)+3})^\intercal=0
\end{eqnarray*} for \(mp\leq i\leq mr-1.\) Hence,
\begin{eqnarray}\label{eq6}
&\hspace{-1 cm}\Scale[0.95]{(x_{4mp}, x_{4mp+1},x_{4mp+2}, x_{4mp+3})^\intercal=(-1)^{m(r-p)}\left(\prod_{i=mp}^{mr-1}(\mathcal{A}_{n,1}^{i,1})^{-1}\!\mathcal{A}_{n,1}^{i+1,0}\right)\!(x_{4mr}, x_{4mr+1},x_{4mr+2}, x_{4mr+3})^\intercal.}&
\end{eqnarray}
By equations \eqref{BAn1} and \eqref{BAn0}, each of the matrices \(\mathcal{A}_{n,1}^{i,1}\) and  \(\mathcal{A}_{n,1}^{i,0}\) for \(0\leq i\leq n-1, i\neq mp, mr\)  has a structure in the form $\Scale[0.75]{\begin{pmatrix}A & \0 \\ B & cI_2\end{pmatrix}}$ where \(c\neq 0\) is a constant and \(A\) is an invertible matrix. Hence, their product  \(\prod_{i=mp}^{mr-1}(\mathcal{A}_{n,1}^{i,1})^{-1}\mathcal{A}_{n,1}^{i+1,0}\) has the same structure, \ie the format $\Scale[0.75]{\begin{pmatrix}A & \0 \\ B & cI_2\end{pmatrix}}$ with \(c\) as a nonzero constant, \(A\) as a \(2\times 2\) invertible matrix. Now we claim that the element on \((2,2)\)-th entry of \(A\) is non-zero.
On the contrary suppose that the element in the \((2,2)\)-th entry of the product matrix \(\Scale[0.9]{\prod_{i=mp}^{mr-1}(\mathcal{A}_{n,1}^{i,1})^{-1}\mathcal{A}_{n,1}^{i+1,0}}\) is zero. The second row in equations \eqref{eq6} leads to \(x_{4mp+1}=x_{4mr}=0\) while the first row causes to \(x_{4mp}=\gamma_1x_{4mr+1}+\gamma_2\) for a nonzero \(\gamma_1.\) The third and forth equations in \eqref{eq6} gives rise to \((x_{4mp+2}, x_{4mp+3})=(\gamma_3x_{4mr+3}+\gamma_4,\gamma_3x_{4mr+3}+\gamma_5 )\) for a nonzero \(\gamma_3.\) Thereby, the null space \eqref{NullSpace1} depends to the variables \(x_{4mr+1},\) \(x_{4mr+2},\) and \(x_{4mr+3}.\) This implies that \(\Null(\mathcal{A}_{n,1})=3.\) Therefore, \(\rank\, \mathcal{A}_{n,1}= 4n-3\) which is a contradiction.

Now consider the matrix \(\mathcal{A}_{n+1, 1}\) for \(n=m(p+q)+1=m(r+s)+1.\) By \eqref{eq3}, elements in \((2, 2)\)-entry of matrix \(\mathcal{A}_{n+1, 1}^{mp, 0}\) and \((1, 1)\)-entry of \(\mathcal{A}_{n+1, 1}^{mr+1, 1}\) are zero. Since \(p\leq r,\) \(mp<mr+1\) for every \(m\geq 1.\) This gives rise to \(\rank\, \mathcal{A}_{n+1, 1}= 4n+3.\) Hence, \(4n+3\)-number of \(n+1\)-grade terms of \(v^{(1)}\) are simplified in the second level normalization step. For \(0\leq j\leq mp-1\) and \(mp+1\leq j\leq n,\) we use the relation \eqref{AscRec}. Further, let \(\alpha_{mp}^2:=0,\)
\begin{eqnarray}\label{eq7}
&\nonumber \alpha_{mp}^1=\frac{a_{mp, mq+2}^1-2\alpha_{mp-1}^1((mp-2)a_{1, 0}^1+(n-mp+1)a_{1, 0}^2)}{2a_{0, 1}^2},&\\
&(\beta_{mp}^1, \beta_{mp}^2)=\frac{(b_{mp, mq+2}^1, b_{mp, mq+2}^2)}{2a_{0, 1}^2}+\frac{((1-mp)a_{1, 0}^1+(mp-1-n)a_{1, 0}^2)(\beta_{mp-1}^1, \beta_{mp-1}^2)+\alpha^1_{mp-1}(b_{1, 0}^1, b_{1, 0}^2)}{a_{0, 1}^2}.   &
\end{eqnarray} We here denote a vector \(V\) divided by a real number, say \(a_{0, 1}^2,\) to denote \(\frac{1}{a_{0, 1}^2}V\) for briefness. This  will be frequently used in this paper. Hence, terms \(\mathscr{P}^k_{n+1, 0}, \mathscr{R}^k_{n+1, 0}\) for \(k=1, 2\) and \(\mathscr{P}^2_{mp, n+1-mp}\) are not necessarily simplified in this level.

Case (ii): We have \(p>r.\) Since \(\ker \mathcal{A}_{n, 1}=\{0\},\) \(4n\)-number of \(n\)-grade terms are simplified. The relation \eqref{AscRec} is then applied for \(0\leq i\leq mp-1\) while the relation \eqref{DecRec} is implemented for \(n\geq i\geq mp+2\). Thereby, terms \(\mathscr{P}^k_{j, n-j}\) and \(\mathscr{R}^k_{j, n-j}\) for \(k=1,2, 0\leq j\leq n, j\neq mp\) are simplified.

For the matrix \(\mathcal{A}_{n+1, 1},\) the second row of \(\mathcal{A}_{n+1, 1}^{mp, 0}\) and the first row of \(\mathcal{A}_{n+1, 1}^{mr+1, 1}\) are both zero. Now the inequality \(p>r\) implies that \(mp\geq mr+1.\) Hence, we consider two subcases: (1) \(mp=mr+1\) and (2) \(mp>mr+1.\)

The subcase (1) infers that \(m=1\) and \(p=r+1\). Here, \(\rank(\mathcal{A}_{n+1, 1})=4n+2\) and the coefficients are assigned similar to the case \(p\leq r\) in (ii). Therefore, the terms \(\mathscr{P}^2_{p, q+2}, \mathscr{P}^1_{p+1, q+1}\) and \(\mathscr{R}^k_{p+1, q+1}\) for \(k=1, 2\) are not necessarily simplified.

The subcase (2) holds when \(m\geq 2\) or \(p>r+1\). Due to the \(\Null(\mathcal{A}_{n+1, 1})=0,\) \(4n+4\)-number of terms are specifically eliminated from the first level normal form. Coefficients are here assigned similarly. Indeed for \(0\leq i\leq mp-1,\) we assign the relation \eqref{AscRec}, and  for \(mp+2\leq i\leq n+2,\) we use the relation \eqref{DecRec} where \((\alpha^1_{n+1}, \alpha^2_{n+1}, \beta^1_{n+1}, \beta^2_{n+1})=(0, 0, 0, 0).\) Therefore, terms \(\mathscr{P}^k_{mq+2}\) and \(\mathscr{R}^k_{mp, mq+2}\) for \(k=1, 2\) may remain in the second level normal form.

Now assume that \(p+q>2,\) \(n=m(p+q)+j\) for a \(j=3,\cdots, p+q\) and \(m\in \mathbb{Z}_{\geq 0}.\) Due to the invertibility of matrices \(\mathcal{A}_{n, 1}^{j, 0}\) for \(0\leq j\leq n-1,\) all columns of \(\mathcal{A}_{n, 1}\) are linearly independent. As a result  \(\rank\, \mathcal{A}_{n, 2}=4n,\) and terms \( \mathscr{P}^k_{j, n-j}\) and \(\mathscr{R}^k_{j, n-j}\) for \(k=1, 2, 0\leq j\leq n-1\) are simplified. Therefore, the recursive relation \eqref{AscRec} is assigned for the coefficients, and the second level normal form of \(v^{(1)}\) is given by \eqref{SecondLevel1}.
\end{proof}
\noindent Now we introduce a partition for the natural numbers given by
\begin{equation*}
\begin{split}
&P_1\!:=\!\{m \lcm(p\!+\!q, r\!+\!s)\!+\!1|\, m\in \mathbb{N}\},  \\
&\Scale[0.88]{ P_3\!:=\!\{m(p\!+\!q)\!+\!1|\,\exists m,m^{\prime}\in \mathbb{N}, m(p\!+\!q)\!-\!m^{\prime}(r\!+\!s)\!=\!1\}},
\end{split}\;
\begin{split}
& P_2\!:=\!\{m \lcm(p\!+\!q, r\!+\!s)+2|\,m\in\mathbb{N}\}, \\
&\Scale[0.88]{ P_4\!:=\!\{m^{\prime}(r\!+\!s)\!+\!1|\,\exists m,m^{\prime}\in \mathbb{N}, m^{\prime}(r\!+\!s)\!-\!m(p\!+\!q)\!=\!1\}},
\end{split}\end{equation*} \(P_5\!:=\!\{m(p\!+\!q)\!+\!1|m\in \mathbb{N}\}\setminus P_1\cup P_3,\) \( P_6\!:=\!\{m(p\!+\!q)\!+\!2|\,m\in \mathbb{N}\}\setminus P_2\cup P_4,\) and \(P_7:=\mathbb{N}\setminus\cup_{i=1}^6P_i.\) Also define \(\mathbb{N}_m:=\{1, 2, \cdots, m\}.\) In order to only deal with \(2N+1\)-degree truncated normal forms for some sufficiently large \(N\), from now on and for our convenience, we assume that all sums have a finite nonzero terms upto degree \(2N+1\).

\begin{lem}\label{Lem2.4} Let conditions \eqref{Conditions} hold, \(r+s\neq p+q\) and \(\gcd(r+s, p+q)=1\). Therefore,
the second level normal form of \(v^{(1)}\) is given by
\begin{eqnarray}
&\nonumber\Theta+\Sum{k\in\mathbb{N}_2}{}{(a^k_{0, 1}\mathscr{P}^k_{0, 1}+b^k_{0, 1}\mathscr{R}^k_{1, 0})+a^2_{0, 2}\mathscr{P}^2_{0, 2}}+\Sum{n\in P_7, i\in \mathbb{N}_2}{} \Scale[0.9]{\big(a^k_{n,0}\mathscr{P}^k_{n, 0}+b^k_{n, 0}\mathscr{R}^k_{n, 0}\big)},&\\
&\Sum{k\in \mathbb{N}_6, i\in\mathbb{N}_2}{}\Sum{n\in P_k,j\in Q_k}{} \Scale[0.9]{(a^k_{j, n-j}\mathscr{P}^k_{j,n-j}+b^k_{j,n-j}\mathscr{R}^k_{j,n-j})}&\label{SecondLevel2_2}
\end{eqnarray}
where \({Q_5:=\left\{\frac{(n-1)p}{p+q}\right\}}\) and \({Q_6:=\left\{\frac{(n-2)p}{p+q}\right\},}\)
\begin{itemize}
\item \({Q_1:=\left\{\frac{(n-1)p}{p+q}, \frac{(n-1)r}{r+s}+1\right\}}\) for \(\frac{p}{p+q}\leq \frac{r}{r+s},\) and  \({Q_1:=\left\{\frac{(n-1)p}{p+q}\right\}}\) when
\(\frac{p}{p+q}> \frac{r}{r+s}.\)
\item \({Q_2=\left\{m, n\right\},}\, a^1_{m, n-k}=b^k_{m, n-m}=0\) for \({k=1,2, m=\frac{(n-2)p}{p+q}}\)  when \({m\leq \frac{(n-2)r}{r+s}+1}.\) Otherwise,
\({Q_2:=\left\{k\right\}}.\)
\item \({Q_3:=\left\{0, m+1\right\}, a^2_{m, n-m}=b^k_{m, n-m}=0}\) for \({k=1, 2, m=\frac{(n-2)r}{r+s}+1}\) if \({\frac{(n-1)p}{p+q}\leq m}.\) For the remaining cases, let
\({Q_3:=\left\{\frac{(n-1)p}{p+q}\right\}}.\)
\item \(Q_4:=\left\{m, n\right\},\) \(a^1_{m, n-m}=b^k_{m, n-m}=0\) for \(k=1, 2,m=\frac{(n-2)p}{p+q}\) if \(m\leq \frac{(n-1)r}{r+s}.\)
Otherwise, we define \(Q_4:=\left\{ k\right\}.\)
\end{itemize}
\end{lem}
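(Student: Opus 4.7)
\textbf{Proof plan for Lemma \ref{Lem2.4}.} The strategy parallels the proof of Lemma \ref{lem1}, using the block structure of the matrix $\mathcal{A}_{n,1}$ in \eqref{An1}--\eqref{BAn1}, but the extra coprimality assumption $\gcd(p+q, r+s)=1$ partitions $\mathbb{N}$ into the seven disjoint classes $P_1,\ldots,P_7$ in which each grade $n$ exhibits exactly one type of degeneracy pattern in $\mathcal{A}_{n,1}$. First I would isolate the diagonal degeneracies: the $(1,1)$ and $(2,2)$ entries of $\mathcal{A}_{n,1}^{j,0}$ vanish exactly when $n-1\in(p+q)\mathbb{N}$ with $j=mp$, while those of $\mathcal{A}_{n,1}^{j,1}$ vanish exactly when $n-1\in(r+s)\mathbb{N}$ with $j=m'r+1$ (analogous statements with $n-2$ for the $a^k_{\cdot,\cdot+2}$ entries). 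Under $\gcd(p+q,r+s)=1$, both degeneracies occur simultaneously at $n\in P_1$ (through $\lcm(p+q,r+s)=(p+q)(r+s)$), while B\'ezout's identity guarantees that $P_3,P_4$ are precisely the off-by-one couplings $m(p+q)-m'(r+s)=\pm 1$, and $P_2$ handles the corresponding phenomenon for $n-2$. The class $P_7$ is the generic regime in which no block of $\mathcal{A}_{n,1}$ degenerates, giving $\rank\mathcal{A}_{n,1}=4n$ and leaving only the four terms indexed by $(n,0)$, which accounts for the $P_7$-sum of \eqref{SecondLevel2_2}.

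Second, for each class $P_i$ I would determine $\rank\mathcal{A}_{n,1}$ by a column-reduced echelon analysis as in the proof of Lemma \ref{lem1}. For $n\in P_5$ (resp.\ $P_6$), a single block $\mathcal{A}_{n,1}^{mp,0}$ (resp.\ the $n-2$ analogue) is rank deficient while the $r+s$-side stays full rank, giving $Q_5=\{(n-1)p/(p+q)\}$ (resp.\ $Q_6=\{(n-2)p/(p+q)\}$) as the sole surviving index. For $n\in P_3,P_4$ the situation is symmetric: one index of each type appears but they are not aligned since $\gcd(p+q,r+s)=1$ forces $mp\ne m'r+1$, and the resulting $Q_3,Q_4$ are explicit. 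The delicate regimes are $n\in P_1, P_2$ where both block types degenerate; here the two degenerate indices $mp$ and $m'r+1$ (resp.\ their $+2$ counterparts) may coincide, sit adjacent, or be widely separated, and their relative ordering is governed by the arithmetic inequality $p/(p+q)\lessgtr r/(r+s)$, equivalently $ps\lessgtr qr$. This inequality dictates whether the ascending recursion \eqref{AscRec} and descending recursion \eqref{DecRec} collide at the degenerate index, which is exactly the dichotomy recorded in the definitions of $Q_1,\ldots,Q_4$.

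Third, with the rank and surviving indices in hand, I would solve the homological equation \eqref{SysEqu} by the same recursive procedure as in Lemma \ref{lem1}: the ascending relation \eqref{AscRec} for $0\le i\le mp-1$, the descending relation \eqref{DecRec} for the relevant upper range, together with boundary assignments at the degenerate indices modeled on \eqref{219}, \eqref{220}, and \eqref{eq7}. In the doubly degenerate cases $P_1,P_2$, the compatibility of the two recursions at the two degenerate indices is verified via the nullspace calculation in \eqref{NullSpace1}--\eqref{eq6}, using the triangular $\begin{pmatrix}A & \mathbf{0}\\ B & cI_2\end{pmatrix}$ structure of the block matrices $(\mathcal{A}_{n,1}^{i,1})^{-1}\mathcal{A}_{n,1}^{i+1,0}$. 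The same contradiction argument as in Lemma \ref{lem1} shows that the $(2,2)$-entry of the relevant product matrix is nonzero, whence $\Null\mathcal{A}_{n,1}$ matches the asserted count. The ``conditional vanishing'' clauses $a^1_{m,n-m}=b^k_{m,n-m}=0$ appearing in the definitions of $Q_2,Q_3,Q_4$ arise precisely when the two degenerate indices become adjacent and the forced choice of one set of unknowns propagates through the recursion to eliminate two extra coefficients.

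The main obstacle will be the bookkeeping in the doubly degenerate classes $P_1,P_2$: one must carefully enumerate the surviving basis vectors in $\mathcal{C}^{n,2}$ as a function of whether $mp<m'r+1$, $mp=m'r+1$, or $mp>m'r+1$, and check that in each subcase the recursive scheme produces a unique solution of \eqref{SysEqu} whose complement is exactly the linear span described by the $Q_k$'s. Once this case analysis is completed and cross-checked against the rank count, assembling the surviving terms over all $n$ yields the normal form \eqref{SecondLevel2_2}.
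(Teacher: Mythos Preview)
Your plan follows essentially the same case-by-case analysis as the paper's own proof: partition $\mathbb{N}$ into $P_1,\ldots,P_7$, locate the block degeneracies of $\mathcal{A}_{n,1}$ in each regime via the vanishing of the diagonal entries in \eqref{BAn0}--\eqref{BAn1}, compute the rank, and then solve \eqref{SysEqu} by combining the recursions \eqref{AscRec}--\eqref{DecRec} with boundary patches at the singular indices, exactly as in Lemma~\ref{lem1}. One small correction: your assertion that $\gcd(p+q,r+s)=1$ forces $mp\ne m'r+1$ for $n\in P_3$ is false (take $p=2,\,q=3,\,r=1,\,s=2$, so $p+q=5$, $r+s=3$, and $m=2,\,m'=3$ gives $mp=m'r+1=4$); the paper does not need this and simply splits into $mp\le m'r+1$ versus $mp>m'r+1$, with the equality case absorbed into the former.
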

\begin{proof}
Since \(p+q\) and \(r+s\) are coprime, \(\lcm(p+q, r+s)=(p+q)(r+s).\) Let \(n\in P_1.\) Thus, \(n=m(r+s)(p+q)+1\) for a \(m\in \mathbb{N}.\) Further, the first, third and fourth columns of matrix \(\mathcal{A}_{n, 1}^{m(r+s)p, 0}\) and the second, third and fourth columns of the matrix \(\mathcal{A}_{n, 1}^{m(p+q)r, 1}\) are zero vectors. Thereby, we apply an approach similar to the proof of Lemma \ref{lem1}. Indeed, we consider two cases:
\bes
(I) \quad \frac{p}{p+q}\leq \frac{r}{r+s} \qquad\hbox{ and } \qquad (II)\quad \frac{p}{p+q}>\frac{r}{r+s}.
\ees
For the case (I), terms \(\mathscr{P}^2_{j, n-j}, \mathscr{R}^1_{j, n-j}, \mathscr{R}^2_{j, n-j}\) for \(j\in \{m(r+s)p, m(p+q)r+1\}\) may still remain in the second level normal form. For the case (II), the polynomial vector fields \(\mathscr{P}^k_{j, n-j}\) and \(\mathscr{R}^k_{j, n-j},\) for \(k=1, 2,\) \(0\leq j\leq n, j\neq m(r+s)p,\) can be eliminated from the first level normal form. The assignments of coefficients for \(0\leq i\leq m(r+s)p-1\) follows \eqref{AscRec} and for \(m(r+s)p+2\leq i\leq n+1,\) we apply \eqref{DecRec}.

Let \(n\in P_2.\) Thus, the element in \((2, 2)\)-entry of matrix \(\mathcal{A}_{n, 1}^{m(r+s)p, 0}\) and \((1, 1)\)-entry of \(\mathcal{A}_{n, 1}^{m(p+q)r+1, 1}\) are zero. When \(m(r+s)p\leq m(p+q)r+1,\) all \(n\)-grade homogenous terms \(\mathscr{P}^k_{j, n-j}\) and \( \mathscr{R}^k_{j, n-j}\) for \(k=1, 2\), \(0\leq j\leq n-1,\) (with only one exception) are simplified. The exception here is the polynomial vector field \(\mathscr{P}^2_{m(r+s)p, m(r+s)q+2}\), where it may still remain in the normalized system. If \(m(r+s)p> m(p+q)r+1,\) the Eulerian \(\mathscr{P}^k_{j, n-j}\) and rotating vector fields \(\mathscr{R}^k_{j, n-j}\) for \(k=1, 2, j=m(r+s)p,\) cannot be necessarily eliminated in the second level normal form step.

Now we first claim that \(P_3\neq \O.\) By the assumption, \(p+q\) and \(r+s\) are coprime. By {\it Bezout's lemma,} there exist nonzero integers \(n_1\) and \(n_2\) so that \(n_1(p+q)+n_2(r+s)=1.\) The Bezout coefficients are expressed as \((n_1-\ell(r+s), n_2+\ell(p+q))\) for \(\ell\in \mathbb{Z}\). Since \(p+q\) and \(r+s\) are positive, \(n_1n_2<0.\) When \(n_1>0\) and \(n_2<0\), \(n_1(p+q)+1\in P_3.\) Otherwise, for all \(k^{\prime}\leq \min\left\{\left\lfloor \frac{n_1}{r+s}\right\rfloor, \left\lfloor \frac{-n_2}{p+q}\right\rfloor\right\}\), \((n_1-\ell^{\prime}(r+s))(p+q)+1\in P_3.\) Therefore, for \(n\in P_3,\) there exist positive integers \(m, m^{\prime}\) so that \(n=m(p+q)+1=m^{\prime}(r+s)+2.\) Recall that the  first, third and fourth columns of \(\mathcal{A}_{n, 1}^{mp, 0}\) are zero vectors. The \((1,1)\)-entry of matrix \(\mathcal{A}_{n, 1}^{m^{\prime}r+1, 1}\) is also zero. When \(mp\leq m^{\prime}r+1,\) \(\Null(\mathcal{A}_{n, 1})=1.\) Hence, \(4n-1\)-number of \(n\)-grade terms can be simplified. The coefficients follow equation \eqref{DecRec} for \(m^{\prime}r+3\leq i\leq n\) and \(1\leq i\leq m^{\prime}r+1.\) Hence, we let \(\alpha_{m^{\prime}r+1}^1:=0,\) and obtain
\begin{eqnarray*}
&&\alpha_{m^{\prime}r+1}^2=\frac{a_{m^{\prime}r+2}^2-(a_{0, 1}^1(m^{\prime}r+2)+a^2_{0, 1}(m^{\prime}s-2))\alpha^2_{m^{\prime}r+2}}{a_{1, 0}^1(m^{\prime}r+1)+a_{1, 0}^2m^{\prime}s},\\
&&\beta_{m^{\prime}r+1}^1=\frac{b_{m^{\prime}r+2}^1-(a_{0, 1}^1(m^{\prime}r+2)+a^2_{0, 1}(m^{\prime}s-1))\beta^1_{m^{\prime}r+2}+b^1_{0, 1}\alpha_{m^{\prime}r+2}^2}{a_{1, 0}^1(m^{\prime}r+1)+a_{1, 0}^2m^{\prime}s},\\
&&\beta_{m^{\prime}r+1}^2=\frac{b_{m^{\prime}r+2}^2-(a_{0, 1}^1(m^{\prime}r+2)+a^2_{0, 1}(m^{\prime}s-1))\beta^2_{m^{\prime}r+2}+b^2_{0, 1}\alpha_{m^{\prime}r+2}^2}{a_{1, 0}^1(m^{\prime}r+1)+a_{1, 0}^2m^{\prime}s}.
\end{eqnarray*}
Therefore, terms \(\mathscr{P}^k_{0, n}, \mathscr{R}^k_{0, n}\) for \(k=1, 2\) and \(\mathscr{P}^1_{m^{\prime}r+2, m^{\prime}s-1}\) are not necessarily simplified. When \(mp> m^{\prime}r+1,\) according to \(\Null(\mathcal{A}_{n, 1})=0\) and invertibility of matrices \(\mathcal{A}_{n, 1}^{j, 0}\) and \(\mathcal{A}_{n ,1}^{i,1}\) for \(0\leq j\leq mp-1\) and \(mp\leq i\leq n-1,\)  coefficients are determined by the recursive relation \eqref{AscRec} when \(0 \leq i\leq mp-1\) and  the relation \eqref{DecRec} for \(mp+1\leq i\leq n\). Hence, terms \(\mathscr{P}^k_{j, n-j}, \mathscr{R}^k_{j, n-j}\) for \(k=1, 2\) and \(0\leq j\leq n, j\neq mp,\) are removed from the second level normal form system. Similar to the above, \(P_4\) is nonempty. Let \(n\in P_4.\) The second, third and fourth columns of \(\mathcal{A}_{n, 1}^{m^{\prime}r, 1}\) are zero vectors. Also, \((2, 2)\)-entry of \(\mathcal{A}_{n, 1}^{mp, 0}\) is zero. When \(mp\leq m^{\prime}r,\) row reduced echelon form of transpose \(\mathcal{A}_{n, 1}\) has a one dimensional null space. Hence, rank of \(\mathcal{A}_{n, 1}\) is \(4n-1\). Due to the invertibility of \(\mathcal{A}_{n, 1}^{i, 0}\) for \(0\leq i\leq n-1, i\neq mp,\) the coefficients are assigned by the relation \eqref{AscRec} where \(\alpha_{mp}^2:=0\) and the coefficients \(\alpha_{mp}^1, \beta_{mp}^1, \beta^2_{mp}\) are defined by equations \eqref{eq7}. Accordingly, terms \(\mathscr{P}^k_{n, 0}\) and \(\mathscr{R}^k_{n, 0}\) for \(k=1, 2\) and \(\mathscr{P}^2_{mp, n-mp}\) are not necessarily simplified in the second level step. If \(mp> m^{\prime}r,\) \(\rank(\mathcal{A}_{n, 1})=4n.\) The coefficient assignments is the same as the cases for \(n\in P_3.\) Thereby, for \(k=1, 2,\) \(0\leq j\leq n, j\neq mp,\) terms \(\mathscr{P}^k_{j, n-j}\) and \(\mathscr{R}^k_{j, n-j}\) can be eliminated.

If \(n\in P_5,\) the first, third and fourth columns of matrix \(\mathcal{A}_{n, 1}^{mp, 0}\) are zero vectors. Due to linear independence of the column space of \(\mathcal{A}_{n, 1},\) rank of this matrix is \(4n.\) For \(0\leq i\leq mp-1,\) coefficients are determined by the recursive relation \eqref{AscRec} while for \(mp+2\leq j\leq n\), the relation \eqref{DecRec} is applied. Therefore, all terms \(\mathscr{P}^k_{j, n-j}, \mathscr{R}^k_{j, n-j}\) for \(k=1, 2\) and \(0\leq j\leq n, j\neq mp,\) can be normalized.

Take \(n\in P_6.\) Then, \((2, 2)\)-entry of \(\mathcal{A}_{n, 1}^{mp, 0}\) is zero and rank of \(\mathcal{A}_{n, 1}\) is \(4n\). The coefficients are determined similar to \(P_3\)-cases. Hence, terms \(\mathscr{P}^k_{mp, n-mp}\) and \(\mathscr{R}^k_{mp, n-mp}\) for \(k=1, 2\) may remain in the second level step. For \(n\in P_7\), we have
\bes P_7=\{m(p+q)+j|j\in \mathbb{N}, 3\leq j\leq p+q\}.\ees Since all matrices \(\mathcal{A}_{n, 1}^{i, 0}\) and \(\mathcal{A}_{n, 1}^{i, 1}\) for \(0\leq i\leq n-1\) are invertible, \(\rank(\mathcal{A}_{n, 1})=4n.\) Hence all \(n\)-grade homogenous terms \(\mathscr{P}^k_{j, n-j}\) and \( \mathscr{R}^k_{j, n-j}\) for \(k=1, 2\), \(0\leq j\leq n-1,\) are removed from the second level normal form.
\end{proof}

\begin{lem} Assume that conditions \eqref{Conditions} are satisfied. If \(r+s\neq p+q\) and \(\gcd(r+s, p+q)\!>\!1,\) the second level normal form of \(v^{(1)}\) is given by
\begin{eqnarray}\nonumber
&\Theta+ \sum_{k\in\mathbb{N}_2}{(a^k_{0, 1}\mathscr{P}^k_{0, 1}+b^k_{0, 1}\mathscr{R}^k_{1, 0})+a^2_{0, 2}\mathscr{P}^2_{0, 2}}+\sum_{n\in P_7, i\in \mathbb{N}_2} {\big(a^k_{n,0}\mathscr{P}^k_{n, 0}+ b^k_{n, 0}\mathscr{R}^k_{n, 0}\big)}&\\
&+ \sum_{m\in \{1, 2, 5, 6\}, k\in\mathbb{N}_2}\sum_{n\in P_m,j\in Q_m} {(a^k_{j, n-j}\mathscr{P}^k_{j,n-j}+b^k_{j,n-j}\mathscr{R}^k_{j,n-j})},&\label{SecondLevel2_3}
\end{eqnarray}
where \(Q_i\) for \(i=1, 2, 5, 6, 7\) are defined in Lemma  \ref{Lem2.4} and \(Q_3=Q_4=P_3=P_4=\emptyset.\)
\end{lem}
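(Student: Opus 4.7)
The plan is to follow the strategy of Lemma \ref{Lem2.4} almost verbatim, with one structural simplification: the sets $P_3$ and $P_4$ collapse to the empty set under the hypothesis $\gcd(r+s,p+q)>1$, so the only work is to verify this and to redo the partition-based bookkeeping.

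First I would check that $P_3=P_4=\emptyset$. By definition, $n\in P_3$ requires positive integers $m,m'$ with $m(p+q)-m'(r+s)=1$, and $n\in P_4$ requires $m'(r+s)-m(p+q)=1$. If $d:=\gcd(p+q,r+s)>1$, then $d$ divides every integer linear combination $m(p+q)-m'(r+s)$, so neither equation is solvable. This replaces the Bezout argument used in Lemma \ref{Lem2.4} to show $P_3,P_4\neq\emptyset$, and it forces $Q_3=Q_4=\emptyset$ as stated.

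Next, for the remaining cases $n\in P_1,P_2,P_5,P_6,P_7$, I would observe that the analysis of Lemma \ref{Lem2.4} carries over without modification. For $n\in P_1$, writing $\ell:=\lcm(p+q,r+s)=(p+q)(r+s)/d$ we have $n=m\ell+1$, and since $\ell$ is a common multiple of $p+q$ and $r+s$, there exist positive integers $\mu,\mu'$ with $\mu(p+q)=\mu'(r+s)=n-1$; the columns of $\mathcal{A}_{n,1}^{\mu p,0}$ and $\mathcal{A}_{n,1}^{\mu' r,1}$ then vanish in precisely the same pattern as in Lemma \ref{Lem2.4}, and the dichotomy $\frac{p}{p+q}\leq\frac{r}{r+s}$ vs.\ $\frac{p}{p+q}>\frac{r}{r+s}$ produces the same $Q_1$. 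The assignment of generator coefficients follows the ascending relation \eqref{AscRec} on the initial block $0\leq i\leq \mu p-1$, the descending relation \eqref{DecRec} on $\mu' r+2\leq i\leq n$, and the middle-block formula \eqref{eq5} analogous to the one derived in Lemma \ref{lem1}. Case $n\in P_2$ is handled identically, with only the $(2,2)$-entry of $\mathcal{A}_{n,1}^{\mu p,0}$ and the $(1,1)$-entry of $\mathcal{A}_{n,1}^{\mu' r+1,1}$ vanishing. Cases $n\in P_5$ and $n\in P_6$ involve only one vanishing column or entry inside $\mathcal{A}_{n,1}^{mp,0}$, so the coefficient assignment is given directly by \eqref{AscRec} and \eqref{DecRec}, producing sets $Q_5$ and $Q_6$ as before. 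For $n\in P_7$ all blocks $\mathcal{A}_{n,1}^{i,0}$ and $\mathcal{A}_{n,1}^{i,1}$ are invertible, giving full rank $4n$ and eliminating every grade-$n$ term except those in the excluded directions $\mathscr{P}^k_{n,0},\mathscr{R}^k_{n,0}$.

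The only genuinely new content, and the main thing to verify carefully, is that the partition $\mathbb{N}=P_1\sqcup\cdots\sqcup P_7$ remains a partition after $P_3,P_4$ are emptied: specifically, every $n$ of the form $m(p+q)+1$ that previously would have landed in $P_3$ (because of a coincidence $m(p+q)+1=m'(r+s)+2$) cannot occur here, since such a coincidence would give $m(p+q)-m'(r+s)=1$, forbidden by $d>1$. Hence every $n=m(p+q)+1\notin P_1$ lies in $P_5$, and symmetrically every $n=m(p+q)+2\notin P_2$ lies in $P_6$. With this bookkeeping confirmed, assembling the surviving generators across all grades yields the claimed expression \eqref{SecondLevel2_3}. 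I expect no substantive obstacle beyond this bookkeeping check and transcribing the coefficient-assignment formulas from Lemma \ref{Lem2.4}.
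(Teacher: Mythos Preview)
Your proposal is correct and follows essentially the same approach as the paper's proof: the paper likewise asserts that $\gcd(p+q,r+s)>1$ forces $P_3=P_4=\emptyset$, refers back to Lemma~\ref{Lem2.4} for $P_5,P_6,P_7$, and redoes the $P_1,P_2$ analysis using the index $\frac{dmp}{p+q}$ with $d:=\lcm(p+q,r+s)$, which is exactly your $\mu p$. Your additional verification that the partition remains complete after emptying $P_3,P_4$ is a welcome bit of care that the paper leaves implicit.
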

\begin{proof}
Due to \(\gcd(p+q, r+s)\neq 1,\) \(P_3\) and \(P_4\) are empty. The argument for the cases of \(n\in P_5, n\in P_6\) and \(n\in P_7\) are the same as in Lemma \ref{Lem2.4}. Hence, it suffices to study the cases for \(n\in P_1\) and \(n\in P_2\). Let \(d:=\lcm(p+q, r+s)\) and \(n:=md+1\in P_1\) for a \(m\in \mathbb{N}\). Since \(\frac{dmp}{p+q}\in \mathbb{N},\) we have
\begin{eqnarray*}
&\frac{dmp}{p+q}a^1_{0, 1}+\left(n-1-\frac{dmp}{p+q}\right)a^2_{0, 1}=\frac{dmp}{p+q}a^1_{0, 1}+\frac{dmq}{p+q}a^2_{0, 1}=0.&
\end{eqnarray*} Hence, the first, third and fourth columns of the matrix \(\mathcal{A}_{n ,1}^{\frac{dmp}{p+q}}\) are zero vectors. Via a similar argument, the second, third and fourth columns of the matrix \(\mathcal{A}_{n ,1}^{\frac{dmr}{r+s}}\) are zero. For the case \(\frac{p}{p+q}\leq \frac{r}{r+s},\) as of the argument in \(P_1\)-case of Lemma \ref{Lem2.4}, we can eliminate \(\mathscr{P}^k_{j, n-j}, \mathscr{R}^k_{j, n-j}\) when \(k=1, 2, 0\leq j\leq n,\) for \(j\neq \frac{dmp}{p+q}\) and \(j\neq \frac{dmr}{r+s}+1.\) Furthermore, \(\mathscr{P}^1_{j, n-j}\) can be normalized when \(j=\frac{dmp}{p+q}\) and \(j=\frac{dmr}{r+s}+1.\) If \(\frac{p}{p+q}>\frac{r}{r+s},\) the vector fields \(\Scale[0.85]{\mathscr{P}^k_{\frac{dmp}{p+q}, n-\frac{dmp}{p+q}}, \mathscr{R}^k_{\frac{dmp}{p+q}, n-\frac{dmp}{p+q}}}\) for \(k=1,2\) may remain in the normal form system.

Now assume that \(n=md+2\in P_2\) for a \(m\in \mathbb{N}.\) When \(\frac{dmp}{p+q}\leq \frac{dmr}{r+s}+1\), terms \(\mathscr{P}^k_{n, 0}, \mathscr{R}^k_{n, 0}\) for \(k=1,2\) and \(\Scale[0.85]{\mathscr{P}^2_{\frac{dmp}{p+q}, n-\frac{dmp}{p+q}}}\) may stay in the normal form system. For \(\frac{dmp}{p+q}>\frac{dmr}{r+s}+1,\) all vector fields \(\mathscr{P}^k_{j, n-j},\) \(\mathscr{R}^k_{j,n-j}\) for \(k=1,2\) and \(0\leq j\leq n, j\neq \frac{dmp}{p+q},\) are simplified. Therefore, the second level normal form is expressed by the vector field \eqref{SecondLevel2_2}. Due to \(P_3=P_4=\emptyset,\) the summations \(\sum_{n\in P_3}{},\) \(\sum_{j\in Q_3}{}\) \(\sum_{n\in P_4}{},\) and \(\sum_{j\in Q_4}{}\) are all zero for both cases of \(\frac{dmp}{p+q}\leq \frac{dmr}{r+s}+1\) and \(\frac{dmp}{p+q}>\frac{dmr}{r+s}+1\). This gives rise to our claim.
\end{proof}
\begin{cor}\cite[Corollary 3.5]{GazorHamziShoghi}\label{lem2}
Let conditions \eqref{Conditions} hold and \(\frac{s}{r}=\frac{q}{p}\). Then, the second level normal form of \(v^{(1)}\) in equation \eqref{pre-classical} is expressed by
\begin{eqnarray*}
&\Theta\!+\!a^1_{0, 1}\mathscr{P}^1_{0, 1}\!+\!a^1_{1, 0}\mathscr{P}^1_{1, 0}\!+\!\Sum{m\in \mathbb{Z}_{\geq 0}, j\in\{0,1\}}{}\left(a^2_{mp+j, mq+1-j}\mathscr{P}^2_{mp+j, mq+1-j}\!+\!\Sum{k\in\mathbb{N}_2}{} b^k_{mp+j, mq+1-j }\mathscr{R}^k_{mp+j, mq+1-j}\right)&\\ [-4.5pt]
&+\Sum{m\in \mathbb{Z}_{\geq 0}}{}\left( a^2_{mp, mq+2}\mathscr{P}^2_{mp, mq+2}\!+\!\Sum{k\in \mathbb{N}_2}{} \big(a^k_{mp+2, mq}\mathscr{P}^k_{mp+2, mq}+ b^k_{mp+2, mq}\mathscr{R}^k_{mp+2, mq}\big)\right)&\\ [-4.5pt]
&+\Sum{m\in \mathbb{Z}_{\geq 0}}{} \Sum{k\in \mathbb{N}_2}{}\Sum{j=3}{p+q} \left(a^k_{m(p+q)+j, 0}\mathscr{P}^k_{m(p+q)+j, 0}\!+\! b^k_{m(p+q)+j, 0}\mathscr{R}^k_{m(p+q)+j, 0}\right).
\end{eqnarray*}
\end{cor}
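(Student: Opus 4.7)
My plan is to reduce the corollary to the general Lemma \ref{lem1}. First I would observe that the hypothesis $\frac{s}{r}=\frac{q}{p}$, rewritten as $sp=qr$, together with $\gcd(p,q)=1$, forces $p\mid r$; symmetrically $\gcd(r,s)=1$ forces $r\mid p$. Hence $p=r$, and then $q=s$. In particular the two ratios in \eqref{Conditions} coincide and $r+s=p+q$, so the weaker inequality $p\leq r$ holds with equality. This places us exactly in the first branch of Lemma \ref{lem1}.

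With $p=r$ and $q=s$ substituted into \eqref{SecondLevel1}, the auxiliary data collapse: $H_1=\{mp,mp+1\}$ becomes a pair of consecutive indices (the two values $mp+j$ with $j\in\{0,1\}$ that appear in the corollary), $H_2=H_3=\{m(p+q)+2\}$, $\delta_1=0$ (so $\mathscr{P}^1_{mp+j, mq+1-j}$ is removable at grade $m(p+q)+1$), and $\delta_2=\delta_3=a^2_{mp,mq+2}$ (so $\mathscr{P}^2_{mp,mq+2}$ persists at grade $m(p+q)+2$). The set $H_4=\{m(p+q)+3,\ldots,(m+1)(p+q)\}$ provides the ``gap grades'' on which only the pure-$z_1$ monomials $\mathscr{P}^k_{n,0}$ and $\mathscr{R}^k_{n,0}$ remain, matching the last sum in the statement.

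The main technical point I would need to verify, and the step I expect to be the real obstacle, is the appearance in the corollary of the extra survivors $\mathscr{P}^k_{mp+2,mq}$ and $\mathscr{R}^k_{mp+2,mq}$ for $k=1,2$ at grade $n=m(p+q)+2$. These are not visible as separate entries in the generic statement of Lemma \ref{lem1}. To justify them, I would revisit the block structure of $\mathcal{A}_{n,1}$ in this grade. Because $p=r$ and $q=s$, two rank deficiencies occur in adjacent column-blocks: the $(2,2)$-entry of $\mathcal{A}^{mp,0}_{n,1}$ equals $mp\,a^1_{0,1}+mq\,a^2_{0,1}=0$, and the $(1,1)$-entry of $\mathcal{A}^{mp+1,1}_{n,1}$ equals $mp\,a^1_{1,0}+mq\,a^2_{1,0}=0$. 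I would compute the reduced row echelon form of the $(mp+2)$-row-block carefully and show that the overlap of these two deficiencies lowers the rank of $\mathcal{A}_{n,1}$ by exactly three beyond what a generic $p<r$ case gives, producing the announced three-dimensional cokernel contribution.

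Once the surviving terms are pinned down grade by grade, the remainder of the proof is purely organizational. I would (i) collect the grade-$1$ survivors into the explicit prefix $\Theta+a^1_{0,1}\mathscr{P}^1_{0,1}+a^1_{1,0}\mathscr{P}^1_{1,0}$ plus the $m=0$ instances of the two infinite families, (ii) index the grade-$(m(p+q)+1)$ survivors by $j\in\{0,1\}$ via $j\mapsto mp+j$, (iii) index the grade-$(m(p+q)+2)$ survivors as $\mathscr{P}^2_{mp,mq+2}$ and $\mathscr{P}^k_{mp+2,mq},\mathscr{R}^k_{mp+2,mq}$, and (iv) write the gap-grade survivors as the triple sum over $m,k,j$ with $3\leq j\leq p+q$. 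Coefficient assignments use the ascending and descending recursions \eqref{AscRec}, \eqref{DecRec} exactly as in the proof of Lemma \ref{lem1}, adapted by equations \eqref{219}--\eqref{eq5} to handle the two rank-deficient columns at indices $mp$ and $mp+1$. The only novelty relative to Lemma \ref{lem1} is the bookkeeping of the extra surviving triple at grade $m(p+q)+2$.
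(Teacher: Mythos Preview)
Your reduction is correct and matches the paper's implicit argument: the hypothesis $\frac{s}{r}=\frac{q}{p}$ together with $\gcd(p,q)=\gcd(r,s)=1$ does force $p=r$, $q=s$, placing you in the $p\le r$ branch of Lemma~\ref{lem1} with equality. The paper offers no separate proof of the corollary (it is simply cited), so specializing Lemma~\ref{lem1} is exactly the intended route, and your handling of grades $1$, $m(p+q)+1$, and the gap grades is fine.

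The gap is in your treatment of grade $n=m(p+q)+2$. You claim the terms $\mathscr{P}^k_{mp+2,mq}$, $\mathscr{R}^k_{mp+2,mq}$ are \emph{additional} survivors beyond those of Lemma~\ref{lem1}, and propose to justify them by showing the rank of $\mathcal{A}_{n,1}$ drops by three more than in the generic $p<r$ case. This is false and the step would fail: the two deficiencies you identify (the $(2,2)$-entry of $\mathcal{A}^{mp,0}_{n,1}$ and the $(1,1)$-entry of $\mathcal{A}^{mp+1,1}_{n,1}$) are precisely the ones already accounted for in the $p\le r$ branch of Lemma~\ref{lem1}'s proof, where $\rank\mathcal{A}_{n,1}=4n-1$ is computed. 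There are exactly five survivors at this grade in both statements. The corollary does not have \emph{more} survivors; it makes a \emph{different complement choice}: the four terms $\mathscr{P}^k_{mp+2,mq}$, $\mathscr{R}^k_{mp+2,mq}$ replace the four terms $\mathscr{P}^k_{m(p+q)+2,0}$, $\mathscr{R}^k_{m(p+q)+2,0}$ of \eqref{SecondLevel1}. (Note that $(mp+2)+mq=m(p+q)+2$, so both index sets live in the same grade.)

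What you actually need is not a rank computation but a verification that the rows of $\mathcal{A}_{n,1}$ corresponding to indices $(mp+2,mq)$, together with the $\mathscr{P}^2_{mp,mq+2}$ row, span a complement to ${\rm im}\,\mathcal{A}_{n,1}$. This is the ``choice of style'' freedom the paper flags before equation~\eqref{SysEqu}. Concretely, instead of running the ascending recursion \eqref{AscRec} all the way to $j=n-1$ and leaving the $(n,0)$ block untouched, one runs \eqref{AscRec} for $0\le j\le mp-1$, uses the analogue of \eqref{eq7} at $j=mp$, and then runs the descending recursion \eqref{DecRec} from $j=n$ down to $j=mp+2$, which is possible because with $r=p$ the block $\mathcal{A}^{j,1}_{n,1}$ is invertible for every $j\ge mp+2$. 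That leaves exactly the $(mp+2,mq)$ block unsimplified and yields the corollary's formula.
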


\subsection{Case II: either \(a_{0,1}^1a_{0, 1}^2>0\), \(a_{1, 0}^1a_{1, 0}^2>0,\) \(\frac{a_{0, 1}^1}{a_{0, 1}^2}\in \mathbb{Q}^c\), or \(\frac{a_{1, 0}^1}{a_{1, 0}^2}\in \mathbb{Q}^c.\)}

\begin{lem}\cite[Lemma 3.6]{GazorHamziShoghi}\label{lem3} Assume \({a_{0,1}^1}{a_{0,1}^2}>0\), \(\frac{a_{0,1}^2}{a_{0,1}^1}\notin\mathbb{N}, \) and consider the vector field
\begin{eqnarray}
\textstyle v^{(2)}=\Theta+{}\Sum{k\in \mathbb{N}_2}{}\left( a^k_{0, 1}\mathscr{P}^k_{0, 1}+b^k_{0, 1}\mathscr{R}^k_{0, 1}\right)+a^2_{0,2}\mathscr{P}_{0,2}^2+a^2_{1, 1}\mathscr{P}^2_{1, 1}+ \Sum{j\in \mathbb{N}}{}\Sum{k\in \mathbb{N}_2}{}\left( a^k_{j,0}\mathscr{P}^k_{j,0} +b^k_{j,0}\mathscr{R}_{j,0}^k \right), \label{SecondLevel3}
\end{eqnarray}
The second level normal form of $v^{(1)}$ is given by \eqref{SecondLevel3} where \(a^2_{1, 1}=0\).
\end{lem}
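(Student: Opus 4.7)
\bpr
The plan is to analyze the matrix \(\mathcal{A}_{n,1}\) from \eqref{An1} grade-by-grade and to apply the ascending recursion \eqref{AscRec} to determine the transformation generators, mirroring the strategy of Lemma \ref{lem1} but tracking the special features of Case II.

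First I would establish invertibility of the diagonal blocks. Recalling \eqref{BAn0}, \(\mathcal{A}_{n,1}^{j,0}\) is lower triangular with diagonal entries \(2(jA+(n-j-1)B)\) in positions \((1,1),(3,3),(4,4)\) and \(2(jA+(n-j-2)B)\) in position \((2,2)\), where \(A:=a^1_{0,1}\) and \(B:=a^2_{0,1}\). Since \(AB>0\), the first type of entry is strictly nonzero for any \(0\leq j\leq n-1\) and \(n\geq 2\). The entry \(jA+(n-j-2)B\) vanishes only when either \(j=0\) and \(n=2\) (so that \((n-2)B=0\)) or \(j=n-1\) and \(B/A=n-1\in\mathbb{N}\); the hypothesis \(a^2_{0,1}/a^1_{0,1}\notin\mathbb{N}\) excludes the latter. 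Hence \(\mathcal{A}_{n,1}^{j,0}\) is invertible for all \(0\leq j\leq n-1\) whenever \(n\geq 3\), while for \(n=2\) only \(\mathcal{A}_{2,1}^{0,0}\) fails invertibility, with its second row identically zero so that \(\mathscr{P}^2_{0,2}\) is necessarily unremovable.

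For each \(n\geq 3\), I would then run the ascending recursion \eqref{AscRec} from the initializer \(\0\). At each step \(0\leq i\leq n-1\), the invertibility of \(\mathcal{A}_{n,1}^{i,0}\) yields a uniquely determined \((\alpha^1_i,\alpha^2_i,\beta^1_i,\beta^2_i)\) that eliminates the four coefficients of block-row \(j=i\) in the grade-\(n\) part of \(v^{(1)}\). This kills all \(4n\) terms at block-rows \(0,\dots,n-1\), leaving only \(\mathscr{P}^k_{n,0}\) and \(\mathscr{R}^k_{n,0}\) for \(k=1,2\) at block-row \(n\), matching the grade-\(n\) content of \eqref{SecondLevel3}.

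The case \(n=2\) requires separate attention and carries the key claim \(a^2_{1,1}=0\). Setting \(\alpha^1_0:=0\) and solving the three consistent equations from \(\mathcal{A}_{2,1}^{0,0}(\alpha^1_0,\alpha^2_0,\beta^1_0,\beta^2_0)^\intercal=\cdots\) reproduces the formulas in \eqref{n=2Coefs1}, leaving \(a^2_{0,2}\mathscr{P}^2_{0,2}\) untouched. Because \(\mathcal{A}_{2,1}^{1,0}\) is invertible (its \((2,2)\)-entry \(a^1_{0,1}-a^2_{0,1}\) is nonzero as \(a^2_{0,1}/a^1_{0,1}\neq 1\)), the step \(i=1\) of \eqref{AscRec} uniquely determines \((\alpha^1_1,\alpha^2_1,\beta^1_1,\beta^2_1)\) that eliminates the entire block-row \(j=1\) at grade two; the second coordinate of this kill equation is precisely \(a^2_{1,1}=0\). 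The block-row \(j=2\) is left untouched, so \(\mathscr{P}^k_{2,0}\) and \(\mathscr{R}^k_{2,0}\) for \(k=1,2\) survive, and grade-one terms are unaffected by the second-level step since grade-zero generators centralize the linear part. The main obstacle will be the careful bookkeeping of updated coefficients at \(n=2\), where the one-dimensional kernel of \(\mathcal{A}_{2,1}^{0,0}\) leaves an ambiguity in \((\alpha_0,\beta_0)\); however, since \(\mathcal{A}_{2,1}^{1,0}\) is invertible, any such freedom is absorbed at the next step, so the convenient choice \(\alpha^1_0:=0\) is consistent and the resulting second-level normal form coincides with \eqref{SecondLevel3}.
\epr
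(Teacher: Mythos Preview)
Your proposal is correct and follows precisely the approach the paper uses throughout Section~\ref{sec2}: verify invertibility of the diagonal blocks \(\mathcal{A}_{n,1}^{j,0}\) and then run the ascending recursion \eqref{AscRec}, with the \(n=2\) step handled by \eqref{n=2Coefs1}--\eqref{n=2Coefs2}. The paper itself does not give a proof of this lemma (it is cited from \cite{GazorHamziShoghi}), but your argument is exactly what the paper's machinery dictates and what is done explicitly in the adjacent Lemmas~\ref{lem1}, \ref{LemR} and \ref{lem3-1}.

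Two small slips to clean up: first, \(\mathcal{A}_{n,1}^{j,0}\) is not lower triangular (it has the off-diagonal entry \(-a^1_{0,1}\) in position \((1,2)\)); nevertheless its determinant is the product of the diagonal entries because of its block structure, so your invertibility criterion is valid. Second, the reason grade-one terms survive the second-level step is not that grade-zero generators centralize \(\Theta\) (they do, but that would actually make them usable in \(d^{1,2}\)); rather, linear generators are excluded because only near-identity transformations with vanishing linear part are admitted. Neither point affects the substance of the proof.
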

\begin{lem}\label{LemR} Let \(\frac{a_{0,1}^2}{a_{0,1}^1}\in\mathbb{N}\) and \({a_{0,1}^2}\neq{a_{0,1}^1}.\) The second level normal form of $v^{(1)}$ is given by \eqref{SecondLevel3} where \(a^2_{1, 1}=0\) and the corresponding term with \(a^2_{n-1, 1}\) is not necessarily simplified. When \({a_{0,1}^2}={a_{0,1}^1},\) the second level is expressed by \eqref{SecondLevel3} in which
\begin{enumerate}
  \item\label{c2} \(a^2_{2, 0}=0,\) if  \(a^1_{1, 0}\neq a^2_{1, 0}\).
  \item\label{c3} \(b^1_{2, 0}= 0,\) when  \(a^1_{1, 0}= a^2_{1, 0}\) and \(|b^1_{1, 0}|+|a^1_{1, 0}|\neq 0.\)
  \item\label{c4} \(b^2_{2, 0}= 0,\) for  \(a^1_{1, 0}= a^2_{1, 0},\) \(b^1_{1, 0}=a^1_{1, 0}= 0\) and \(b^2_{1, 0}\neq 0.\)
\end{enumerate}
\end{lem}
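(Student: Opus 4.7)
My plan is to adopt the rank-and-recursion framework of Lemma~\ref{lem3}: substitute the hypothesis $a^2_{0,1}=k\,a^1_{0,1}$ with $k\in\mathbb{N}$ into the block matrices \eqref{BAn0}--\eqref{BAn1}, identify the grades at which diagonal entries vanish, and then solve the generalized homological system \eqref{SysEqu} using the ascending recursion \eqref{AscRec}. The coefficients that survive in the second level normal form are precisely those indexed by rows that the column space of $\mathcal{A}_{n,1}$ cannot reach.

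For the assertion with $k\neq 1$, the only vanishing candidate among diagonal entries is the $(2,2)$-entry of $\mathcal{A}_{n,1}^{j,0}$, which equals $a^1_{0,1}\bigl[j+k(n-j-2)\bigr]$. A nonnegative integer solution of $j+k(n-j-2)=0$ requires $(k-1)\mid k(n-2)$, and since $\gcd(k,k-1)=1$ this reduces to $(k-1)\mid(n-2)$; the extremal pair $j=n-1$ occurs precisely at $n=k+1$. For every other grade, all blocks $\mathcal{A}_{n,1}^{j,0}$ are invertible and the argument of Lemma~\ref{lem3} transfers verbatim, yielding \eqref{SecondLevel3} with $a^2_{1,1}=0$. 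At $n=k+1$, inspection of \eqref{BAn0} shows that the entire second row of $\mathcal{A}_{n,1}^{n-1,0}$ is zero, so the equation governing $a^2_{n-1,1}$ becomes independent of the last column block $(\alpha^1_{n-1},\alpha^2_{n-1},\beta^1_{n-1},\beta^2_{n-1})$. A direct rank count then confirms that $\rank\,\mathcal{A}_{n,1}$ drops by exactly one and that the single surviving extra coefficient is $a^2_{n-1,1}$, as claimed.

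When $a^2_{0,1}=a^1_{0,1}$, the $(2,2)$-entry of $\mathcal{A}_{n,1}^{j,0}$ collapses to $(n-2)a^1_{0,1}$, which vanishes simultaneously for every $j$ at $n=2$ and is nonzero otherwise. For $n\geq 3$, Lemma~\ref{lem3}'s recursion again returns \eqref{SecondLevel3} with $a^2_{1,1}=0$. The delicate case is $n=2$, where I would write the $12\times 8$ matrix $\mathcal{A}_{2,1}$ explicitly. Rows $2$ and $9$ are identically zero and correspond to the unavoidable coefficients $a^2_{0,2}$ and $a^1_{2,0}$; row $6$ (the equation for $a^2_{1,1}$) turns out to be a scalar multiple of row $1$, which is the mechanism behind the identity $a^2_{1,1}=0$. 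The three candidate coefficients $a^2_{2,0},\,b^1_{2,0},\,b^2_{2,0}$ occupy rows $10,11,12$ and meet only the submatrix $\mathcal{A}_{2,1}^{1,1}$. Specialising \eqref{BAn1} at $n=2,\,j=1$ shows that this submatrix has a zero $(1,1)$-entry, diagonal entries $a^1_{1,0}$ in positions $(2,2),(3,3),(4,4)$, and first-column off-diagonals $-a^2_{1,0},-b^1_{1,0},-b^2_{1,0}$. The hypotheses (1)--(3) are precisely the nested conditions under which a pivot can be found to eliminate $a^2_{2,0},\,b^1_{2,0}$, or $b^2_{2,0}$ respectively, each successive case invoking a different generator because the pivots used by the earlier cases are excluded by its assumptions. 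In case (1) one exploits the nonvanishing of the $(1,1)$-entry $a^2_{1,0}-a^1_{1,0}$ of $\mathcal{A}_{2,1}^{0,1}$ to absorb the side-effect of the pivot in the upper row block; cases (2) and (3) are treated analogously with $b^1_{1,0}$ or $b^2_{1,0}$ playing the role of the nonzero quantity.

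The principal obstacle is the $n=2$ analysis in the second regime: several blocks simultaneously lose a row, and the three sub-cases are distinguished only by which of the grade-one coefficients is nonzero. For each sub-case one must exhibit an explicit pivot and verify that its choice does not undo the normalization already carried out in rows $5$--$8$ through $\mathcal{A}_{2,1}^{0,1}$; the computations are routine but tedious and must be tracked through the recursive assignment of $(\alpha^k_0,\beta^k_0)$ and $(\alpha^k_1,\beta^k_1)$.
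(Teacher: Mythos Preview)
Your overall framework---substituting $a^2_{0,1}=k\,a^1_{0,1}$ into the blocks \eqref{BAn0}--\eqref{BAn1}, locating the zeros on the diagonal of $\mathcal{A}_{n,1}^{j,0}$, and invoking the recursion \eqref{AscRec} as in Lemma~\ref{lem3}---matches the paper's approach exactly, and your treatment of the $k\neq 1$ part is correct.

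There is, however, a genuine error in your $k=1$ discussion. You write that the linear dependence of rows $1$ and $6$ of $\mathcal{A}_{2,1}$ is ``the mechanism behind the identity $a^2_{1,1}=0$.'' It is the opposite: that dependence is precisely why $a^2_{1,1}$ \emph{cannot} be eliminated and hence survives in the normal form~\eqref{SecondLevel3}. The statement of the lemma does not assert $a^2_{1,1}=0$ in the case $a^1_{0,1}=a^2_{0,1}$; it only asserts that \emph{one} of $a^2_{2,0},b^1_{2,0},b^2_{2,0}$ can additionally be set to zero. The row dependence forces $a^2_{1,1}$ to remain, and the content of claims \ref{c2}--\ref{c4} is to recover the ``lost'' degree of freedom by eliminating a coefficient from rows $10$--$12$ instead.

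Your sketch of how sub-cases \ref{c2}--\ref{c4} are handled is also imprecise. The paper does not merely observe that ``a pivot can be found''; it performs an explicit row interchange in $\mathcal{A}_{2,1}$ (e.g.\ swapping rows to bring a nonzero entry from $\mathcal{A}_{2,1}^{1,1}$ or $\mathcal{A}_{2,1}^{0,1}$ into the position needed for invertibility of the working sub-block $\widehat{\mathcal{A}_{2,1}^{1,0}}$), then applies the analogue of \eqref{n=2Coefs1}--\eqref{n=2Coefs2} with the permuted right-hand side. In claim~\ref{c2} the relevant nonzero quantity is indeed $a^2_{1,0}-a^1_{1,0}$, but it enters through the row governing $a^1_{1,1}$ (row~$5$), not through the ``upper row block'' as you state; in claim~\ref{c3} the further dependence of rows $5$ and $10$ (when $a^1_{1,0}=a^2_{1,0}$) forces one to bring in row~$11$, and in claim~\ref{c4} row~$12$. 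Tracking which rows become dependent at each successive specialization, and exhibiting the corresponding interchange, is the substance of the argument---your proposal identifies the right nonzero hypotheses but does not yet carry out this bookkeeping.
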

\begin{proof} Assume \(\frac{a_{0,1}^2}{a_{0,1}^1}\in\mathbb{N}\) and \({a_{0,1}^2}\neq{a_{0,1}^1}.\) Since the second row of the matrix \(\mathcal{A}_{n, 1}^{n-1, 0}\) is a zero vector, the term \(\mathscr{P}^2_{n-1, 1}\) is not necessarily simplified. For the cases when \(a^1_{1, 0}\neq 0,\) the term associated with \(a^2_{n, 0}\) can be instead normalized. The rest of the proof is obtained through an approach similar to the proof of Lemma \ref{lem3}. Now let \({a_{0,1}^2}={a_{0,1}^1}.\)
 For claim \ref{c2}, the first and sixth rows of the matrix \(\mathcal{A}_{2,1}\) are linearly dependent and the second row of \(\mathcal{A}_{2, 1}^{1, 0}\) is a zero vector. Hence, term \(\mathscr{P}^2_{1, 1}\) cannot necessarily be eliminated in the second level step. Due to \(a^1_{1, 0}\neq a^2_{1, 0},\) we
interchange the fifth row with the eighth row in \(\mathcal{A}_{2,1}\) to obtain a modified matrix \(\widehat{\mathcal{A}}_{2,1},\) where its sub-matrix \(\widehat{\mathcal{{A}}_{2,1}^{1, 0}}\) is invertible. Next, the coefficients \((\alpha^1_{0}, \alpha^2_{0}, \beta^1_{0}, \beta^2_{0})\) are derived by relation \eqref{n=2Coefs1} and coefficients \((\alpha^1_{1}, \alpha^2_{1}, \beta^1_{1}, \beta^2_{1})\) are computed from equation \eqref{n=2Coefs2} where matrices \(\mathcal{{A}}_{2,1}^{1, 0}\) and \(\mathcal{{A}}_{2,1}^{0, 1}\) are replaced with \(\widehat{\mathcal{{A}}_{2,1}^{1, 0}}\) and \(\widehat{\mathcal{{A}}_{2,1}^{0, 1}},\) respectively. We further need to replace \(a^2_{1, 1}\) with \(a^2_{2, 0}.\) Therefore, terms \(\mathscr{P}^1_{j, 2-j}, \mathscr{R}^1_{j, 2-j}, \mathscr{R}^2_{j, 2-j}\) and \(\mathscr{P}^2_{2, 0}\) for \(j=0, 1\) are simplified in our second level normal form step.

In claim \ref{c3}, we have \({a_{0,1}^1}= {a_{0,1}^2}\) and \(a^1_{1, 0}= a^2_{1, 0}.\) In this case, the eighth and fifth rows of \(\mathcal{A}_{2,1}\) are lineally dependent. Then, term \(\mathscr{P}^2_{2, 0}\) may not be eliminated in the second level step. Since \(|b^1_{1, 0}|+|a^1_{1, 0}|\neq 0,\) the eleventh row of \(\mathcal{A}_{2,1}\) is not a zero vector and if we interchange it with sixth row, the modified matrix \(\widehat{\mathcal{{A}}_{2,1}^{1, 0}}\) becomes invertible. We, of course, need to replace \(b^1_{2, 0}\) with \(a^2_{1, 1}.\) Terms \(\mathscr{P}^2_{0, 2},\mathscr{P}^2_{1, 1}, \mathscr{P}^1_{2, 0}, \mathscr{P}^2_{2, 0}\) and \(\mathscr{R}^2_{2, 0}\) may not be normalized from the first level normal form.

For claim \ref{c4}, we have \(b^1_{1, 0}=a^1_{1, 0}= 0\) and thus, the eleventh row of \(\mathcal{{A}}_{2,1}\) is a zero vector. Due to \(b^2_{1, 0}\neq 0,\) the twelfth row of \(\mathcal{{A}}_{2,1}\) is non-zero. The coefficients are assigned similar to claim \ref{c3} while the twelfth row is replaced with fifth row of \(\mathcal{{A}}_{2,1}\) and \(a^1_{1, 1}\) is interchanged with \(b^2_{2, 0}.\) Hence, in claims \ref{c2}, \ref{c3} and \ref{c4}, terms \(\mathscr{P}^1_{j, 2-j}, \mathscr{R}^1_{j, 2-j}, \mathscr{R}^2_{j, 2-j}\) and \(\mathscr{R}^2_{2, 0}\) for \(j=0, 1\) are normalized in the second level normal form step.
\end{proof}
\begin{lem}\label{lem3-1}  When \({a_{1,0}^1}{a_{1,0}^2}>0\) and \(\frac{a_{1,0}^1}{a_{1,0}^2}\notin \mathbb{N},\) there are near identity polynomial changes of  state variables such that they send $v^{(1)}$ into its second level normal form given by
\begin{eqnarray}
\textstyle v^{(2)}=\Theta+\Sum{k\in\mathbb{N}_2}{}\left( a^k_{1, 0}\mathscr{P}^k_{1, 0}+b^k_{1, 0}\mathscr{R}^k_{1, 0}\right)+a^1_{2, 0}\mathscr{P}_{2, 0}^1+a^1_{1, 1}\mathscr{P}^1_{1, 1}+ \Sum{j\in \mathbb{N}}{}\Sum{k\in\mathbb{N}_2}{}\left( a^k_{0, j}\mathscr{P}^k_{0, j} +b^k_{0, j}\mathscr{R}_{0, j}^k \right) \label{SecondLevel3-1}
\end{eqnarray}
where \(a^2_{1, 1}=0.\)
\end{lem}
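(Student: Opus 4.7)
The plan is to parallel the argument of Lemma~\ref{lem3} by exploiting the manifest involution of the Lie algebra \(\mathcal{G}\) that swaps the two complex coordinate pairs. Define \(\sigma:\mathcal{G}\to\mathcal{G}\) on the basis by \(\sigma(\mathscr{P}^k_{m,n})=\mathscr{P}^{3-k}_{n,m}\) and \(\sigma(\mathscr{R}^k_{m,n})=\mathscr{R}^{3-k}_{n,m}\). A direct inspection of the structure constants in \eqref{StructureConstants} shows that \(\sigma\) is a Lie algebra automorphism which preserves the grading \(\delta\) defined in \eqref{grade}. Under \(\sigma\), coefficients transform as \(a^k_{m,n}\leftrightarrow a^{3-k}_{n,m}\) and \(b^k_{m,n}\leftrightarrow b^{3-k}_{n,m}\). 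In particular, the present hypotheses \(a^1_{1,0}a^2_{1,0}>0\) and \(a^1_{1,0}/a^2_{1,0}\notin\mathbb{N}\) are sent to \(a^1_{0,1}a^2_{0,1}>0\) and \(a^2_{0,1}/a^1_{0,1}\notin\mathbb{N}\), which are precisely the hypotheses of Lemma~\ref{lem3}.

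Next, I would verify \(\sigma\)-equivariance of the generalized homological maps: \(d^{n,2}(\sigma(X_{n-1}),\sigma(X_n))=\sigma\,d^{n,2}(X_{n-1},X_n)\), once \(v^{(1)}_0,v^{(1)}_1\) are replaced by their \(\sigma\)-images. Consequently, if \(X\) is a transformation generator effecting the second-level normalization of \(\sigma(v^{(1)})\) guaranteed by Lemma~\ref{lem3}, then \(\sigma(X)\) normalizes \(v^{(1)}\) itself. Applying \(\sigma\) termwise to the right-hand side of \eqref{SecondLevel3} produces exactly the list of complement generators appearing in \eqref{SecondLevel3-1}: the cubic terms \(a^k_{0,1}\mathscr{P}^k_{0,1},b^k_{0,1}\mathscr{R}^k_{0,1}\) become \(a^k_{1,0}\mathscr{P}^k_{1,0},b^k_{1,0}\mathscr{R}^k_{1,0}\); the residual quintic terms \(\mathscr{P}^2_{0,2}\) and \(\mathscr{P}^2_{1,1}\) become \(\mathscr{P}^1_{2,0}\) and \(\mathscr{P}^1_{1,1}\); and the infinite tower \(\{\mathscr{P}^k_{j,0},\mathscr{R}^k_{j,0}\}_{j\in\mathbb{N}}\) is carried to \(\{\mathscr{P}^k_{0,j},\mathscr{R}^k_{0,j}\}_{j\in\mathbb{N}}\), exactly matching the displayed form.

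For the vanishing assertion on the coefficient of the \(\mathscr{P}^{\,\cdot}_{1,1}\) term, I would fall back on the method of indeterminate coefficients at grade \(n=2\). Under the standing hypothesis, the relevant diagonal entries of the block \(\mathcal{A}_{2,1}^{0,1}\) arising from \eqref{BAn1} do not simultaneously vanish, so one can solve the \(n=2\) instance of \eqref{SysEqu} recursively, first fixing \((\alpha^1_0,\alpha^2_0,\beta^1_0,\beta^2_0)\) as in \eqref{n=2Coefs1} and then using \eqref{n=2Coefs2} (with the \(\sigma\)-image blocks) to eliminate the \(\mathscr{P}^{\,\cdot}_{1,1}\) coefficient. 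This yields the claimed equality.

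The main obstacle I anticipate is not conceptual but bookkeeping: the complement spaces \(\mathcal{C}^{n,2}\) are defined only up to a non-canonical \emph{choice of style}, and the specific recursive assignments in the proof of Lemma~\ref{lem3} depend on invertibility of certain sub-blocks. One must verify that the \(\sigma\)-conjugate choice remains admissible, i.e.\ that the analogously reduced matrix \(\widehat{\mathcal{A}}_{n,1}\) is invertible after the index swap, so that the ascending/descending recursions \eqref{AscRec}--\eqref{DecRec} carry over verbatim. Since \(\sigma\) preserves the grading and intertwines the block decomposition of \(\mathcal{A}_{n,1}\), this verification reduces to transposing the two types of blocks \(\mathcal{A}_{n,1}^{j,0}\) and \(\mathcal{A}_{n,1}^{j,1}\) along the index swap \(j\leftrightarrow n-1-j\), which is a direct calculation from \eqref{BAn0}--\eqref{BAn1}.
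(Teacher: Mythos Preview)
Your approach is correct and genuinely different from the paper's. The paper proves Lemma~\ref{lem3-1} by direct computation parallel to Lemma~\ref{lem3}: for $n=2$ it writes down explicit coefficient assignments (setting $\alpha_1^1=0$, $\alpha_1^2=\tfrac{a^2_{2,0}}{2a^1_{1,0}}$, etc.) and then solves via the block $\mathcal{A}_{2,1}^{0,1}$; for $n>2$ it observes that under the hypotheses all blocks $\mathcal{A}_{n,1}^{j,1}$ are invertible and applies the descending recursion \eqref{DecRec}, leaving only the $(0,n)$-indexed terms. By contrast, you exploit the coordinate-swap involution $\sigma$ to transport Lemma~\ref{lem3} wholesale, which is more conceptual and avoids redoing any linear algebra. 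The verification that $\sigma$ respects the structure constants \eqref{StructureConstants} and interchanges the block families $\mathcal{A}_{n,1}^{j,0}\leftrightarrow\mathcal{A}_{n,1}^{n-1-j,1}$ (up to the coefficient relabeling) is routine, as you note. What the paper's route buys is explicit formulas for the transformation generators, useful for symbolic implementation; what yours buys is that the result is seen as an immediate corollary of Lemma~\ref{lem3} with no new computation at all. One remark: under $\sigma$ the condition ``$a^2_{1,1}=0$'' from Lemma~\ref{lem3} is carried to ``$a^1_{1,1}=0$'' here, which is consistent with the paper's own proof (it states that for $n=2$ all $\mathscr{P}^k_{j,2-j}$ with $j=1,2$ are simplified except $\mathscr{P}^1_{2,0}$); the ``$a^2_{1,1}=0$'' in the lemma statement appears to be a typographical slip, and your symmetry argument actually delivers the intended conclusion automatically, so your separate fallback to indeterminate coefficients is unnecessary.
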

\begin{proof} When \(n=2\) and \({a_{1, 0}^1}\neq {a_{1, 0}^2},\) the coefficients are defined as \(\alpha_1^1=0,\) \(\alpha_1^2=\frac{a^2_{2, 0}}{2a_{1, 0}^1},\) \(\beta_1^1=\frac{b^1_{2, 0}}{2a_{1, 0}^1},\) \(\beta_1^2=\frac{b^2_{2, 0}}{2a_{1, 0}^1},\) and
\begin{eqnarray*}
&(\alpha^1_0, \alpha^2_0, \beta^1_0, \beta^2_0)^\intercal={(\mathcal{A}_{2 ,1}^{0,1})}^{-1}\left({(a^1_{1, 1}, a^2_{1, 1}, b^1_{1, 1}, b^2_{1, 1})}^\intercal-\mathcal{A}_{2 ,1}^{1, 0}{(\alpha_1^1, \alpha_1^2, \beta_1^1, \beta_1^2)}^\intercal\right).&
\end{eqnarray*} These simplify all terms \(\mathscr{P}^k_{j, 2-j}\) and \(\mathscr{R}^k_{j, 2-j}\) for \(j=1, 2, k=1, 2,\) with only one exception, that is, \(\mathscr{P}^1_{2, 0}\). Proofs for other claims when \(n=2\) are similar to Lemma \ref{lem3}, where we omit them for briefness.

For \(n>2\), matrices \(\mathcal{A}_{n,1}^{k, 1}\) for \(n\geq k\geq 1\) in \eqref{BAn1} are invertible. Then, the recursive relation \eqref{DecRec} for \(0\leq i\leq n-1\) is applied to normalize terms \(\mathscr{P}^1_{0, n}, \mathscr{P}^2_{0, n}, \mathscr{R}^1_{0, n} \) and \(\mathscr{R}^2_{0, n}.\)
\end{proof}
\begin{lem}\label{lem38}
  \begin{enumerate}
    \item Let \(\frac{a_{0, 1}^1}{a_{0, 1}^2} \in \mathbb{Q}^c.\) Then, the second level normal form of $v^{(1)}$ in equation \eqref{pre-classical} follows \eqref{SecondLevel3} where \(a^2_{1, 1}=0.\)
    \item When \(\frac{a_{1,0}^1}{a_{1,0}^2} \in \mathbb{Q}^c,\) the second  level normal form of $v^{(1)}$ is expressed by \eqref{SecondLevel3-1} where \(a^1_{1, 1}=0.\)
  \end{enumerate}
\end{lem}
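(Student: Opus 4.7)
The plan is to adapt the strategy of Lemmas~\ref{lem3} and~\ref{lem3-1}: for part~(1) I would use the ascending recursion \eqref{AscRec} driven by the diagonal blocks \(\mathcal{A}_{n,1}^{j,0}\) of \eqref{BAn0}, while part~(2) follows from the symmetric descending recursion \eqref{DecRec} driven by \(\mathcal{A}_{n,1}^{j,1}\) of \eqref{BAn1}. Since the two arguments are structurally identical under swapping the \(z_1\)- and \(z_2\)-related indices in \eqref{def-E-and-Theta}, I shall outline only part~(1) in detail and record the translation for part~(2) at the end.

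The first step is the key algebraic observation: irrationality of \(a^1_{0,1}/a^2_{0,1}\) keeps every diagonal entry of \(\mathcal{A}_{n,1}^{j,0}\) nonzero with exactly one exception. Indeed, \(j\,a^1_{0,1}+(n-j-\kappa)a^2_{0,1}=0\) for \(\kappa\in\{1,2\}\) would force \(a^1_{0,1}/a^2_{0,1}=-(n-j-\kappa)/j\in\mathbb{Q}\), contradicting the hypothesis unless numerator and denominator vanish simultaneously; the latter is possible only at \((n,j,\kappa)=(2,0,2)\), namely the \((2,2)\)-entry of \(\mathcal{A}_{2,1}^{0,0}\). Hence \(\mathcal{A}_{n,1}^{j,0}\) is invertible for every \(j\geq 1\), and also for \(j=0\) whenever \(n\geq 3\). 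For \(n\geq 3\) I would then apply \eqref{AscRec} sequentially for \(i=0,1,\ldots,n-1\); at each step the unique solution for \((\alpha^1_i,\alpha^2_i,\beta^1_i,\beta^2_i)\) annihilates the four updated grade-\(n\) coefficients at index \((i,n-i)\). Because block row \(n\) of \(\mathcal{A}_{n,1}\) is never paired with any diagonal block, the coefficients \(a^k_{n,0},b^k_{n,0}\) remain untouched and furnish the grade-\(n\) tail of \eqref{SecondLevel3}.

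The delicate case is \(n=2\). Here \(\mathcal{A}_{2,1}^{0,0}\) has rank~\(3\) with its zero row at the position of \(\mathscr{P}^2_{0,2}\), accounting for the surviving \(a^2_{0,2}\mathscr{P}^2_{0,2}\) in \eqref{SecondLevel3}. The next block \(\mathcal{A}_{2,1}^{1,0}\) has diagonal \(\bigl(a^1_{0,1},\,a^1_{0,1}-a^2_{0,1},\,a^1_{0,1},\,a^1_{0,1}\bigr)\), all nonzero by irrationality and hence invertible. My plan is to choose \(\alpha_0\) as any solution of the three consistent rows of \(\mathcal{A}_{2,1}^{0,0}\alpha_0+(a^1_{0,2},a^2_{0,2},b^1_{0,2},b^2_{0,2})^{\!\intercal}=\0\), and then to use \eqref{n=2Coefs2} with \(\mathcal{A}_{2,1}^{1,0}\) invertible to pick \(\alpha_1\) that annihilates the full \(4\)-vector of updated \((1,1)\)-coefficients, forcing \(a^2_{1,1}=0\). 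The four coefficients at index \((2,0)\) are then determined by \(\alpha_1\) through \(\mathcal{A}_{2,1}^{1,1}\alpha_1\), and a short direct calculation shows that the residue from the one-dimensional kernel freedom of \(\mathcal{A}_{2,1}^{0,0}\) contributes identically zero into row~1 of \(\mathcal{A}_{2,1}^{1,1}\alpha_1\), so all four generically persist and supply the remaining grade-\(2\) terms \(\sum_k(a^k_{2,0}\mathscr{P}^k_{2,0}+b^k_{2,0}\mathscr{R}^k_{2,0})\) of \eqref{SecondLevel3}.

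The main technical obstacle I anticipate is exactly this \(n=2\) bookkeeping: verifying that the rank deficiency of \(\mathcal{A}_{2,1}^{0,0}\) neither obstructs the cancellation at \((1,1)\) nor inadvertently trims extra coefficients at \((2,0)\). Invertibility of \(\mathcal{A}_{2,1}^{1,0}\) — which failed in Case~I but is restored here by irrationality — is precisely what absorbs the residue inherited from \(\alpha_0\). Part~(2) then follows by the mirror descending argument: irrationality of \(a^1_{1,0}/a^2_{1,0}\) makes \(\mathcal{A}_{n,1}^{j,1}\) invertible for every \(j\leq n-2\) and also for \(j=n-1\) when \(n\geq 3\); the un-removable grade-\(2\) exception now migrates to the \((1,1)\)-entry of \(\mathcal{A}_{2,1}^{1,1}\), keeping \(a^1_{2,0}\mathscr{P}^1_{2,0}\) in the normal form, while \(a^1_{1,1}=0\) follows from invertibility of \(\mathcal{A}_{2,1}^{0,1}\) under the hypothesis \(a^1_{1,0}/a^2_{1,0}\in\mathbb{Q}^{c}\).
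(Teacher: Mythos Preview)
Your proposal is correct and follows essentially the same strategy as the paper's own argument: the irrationality hypothesis forces every diagonal entry \(j\,a^1_{0,1}+(n-j-\kappa)a^2_{0,1}\) (respectively \((j-\kappa)a^1_{1,0}+(n-j-1)a^2_{1,0}\)) to be nonzero except at the single degenerate pair \((n,j)=(2,0)\) (respectively \((2,1)\)), so the ascending recursion \eqref{AscRec} for part~(1) and the descending recursion \eqref{DecRec} for part~(2) go through verbatim as in Lemmas~\ref{lem3} and~\ref{lem3-1}. The paper's proof is much terser---it simply observes that irrationality gives \(a^1_{0,1}\neq a^2_{0,1}\) and \(a^1_{1,0}\neq a^2_{1,0}\), invokes those lemmas for \(n=2\), and declares the case \(n>2\) ``similar''---whereas you spell out the invertibility mechanism explicitly; one small remark is that your bookkeeping for the one-dimensional kernel at \(n=2\) is more transparently explained by the Lie identity \([v_1,v_1]=0\), which places \(v_1\) itself in \(\ker d^{2,2}\) and immediately gives \(\rank\mathcal{A}_{2,1}=7\).
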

\begin{proof}
  Assume that \(n=2.\) Since \(\frac{a_{0, 1}^1}{a_{0, 1}^2}\neq 1\) and \(\frac{a_{1,0}^1}{a_{1,0}^2}\neq 1,\) according to  Lemmas \ref{lem3} and \ref{lem3-1}, \(a^1_{1, 1}=a^2_{1, 1}=0.\) For \(n>2,\) the argument is similar to the proof of Lemma  \ref{lem3-1} and is omitted for briefness.
\end{proof}
\begin{exm}
Consider the first level normal form
\begin{eqnarray}\label{Example2}
&\nonumber v^{(1)}:=\mathscr{R}_{0, 0}^1+ \sqrt{2}\mathscr{R}_{0, 0}^2+
{\alpha}\mathscr{P}_{0, 1}^1+\beta\mathscr{P}_{0, 1}^2+\mathscr{P}_{1, 0}^1-\mathscr{P}_{1, 0}^2+{}\mathscr{P}_{1, 1}^1+ \mathscr{P}_{1, 1}^2+{}\mathscr{P}_{ 2, 0}^2+\mathscr{R}_{2, 0}^1+\frac{1}{6}\mathscr{P}_{2, 1}^1&\\
&+\frac{2}{3}\mathscr{P}_{2, 1}^2 +\mathscr{P}_{3, 0}^2+\mathscr{P}_{3, 1}^1+\mathscr{P}_{3, 1}^2+\mathscr{P}_{4, 0}^2+\mathscr{R}_{4, 0}^2+\mathscr{P}_{4, 1}^1+\mathscr{P}_{4, 1}^2+\mathscr{P}_{5, 0}^1+\mathscr{P}_{5, 0}^2.&
\end{eqnarray} and assume that  \(\alpha=1, \beta=2.\) Hence, \(a_{0, 1}^1=1, a_{0, 1}^2=2\) and \(a_{0, 1}^1a_{0, 1}^2>0.\) These conditions are matched with Lemma \ref{LemR} where \(\frac{a_{0, 1}^1}{a_{0, 1}^2}=\frac{1}{2}\). Following proof of Lemma \ref{lem3}, we consider three cases:
\begin{itemize}
  \item[1.] \emph{Normalizing 2-grade terms}: For computation of coefficients we have \(\alpha^k_{1, 0}=\beta^k_{1, 0}=0\) for \(k=1, 2\) and
  \begin{eqnarray*}
(\alpha^1_{1, 0}, \alpha^2_{1, 0}, \beta^1_{1, 0}, \beta^2_{1, 0})^\intercal=\Scale[0.8]{\begin{bmatrix}
                                                                   2 & -2 & 0 & 0 \\
                                                                   0  &-2 & 0 & 0 \\
                                                                   0  & 0 & 2& 0 \\
                                                                   0  & 0 & 0 & 2
                                                                  \end{bmatrix}^{-1} \begin{bmatrix}
                                                                                  1 \\
                                                                                  1 \\
                                                                                  0 \\
                                                                                  0
                                                                                \end{bmatrix}=\begin{bmatrix}
                                                                                                0 \\
                                                                                                -\frac{1}{2} \\
                                                                                                0 \\
                                                                                                0
                                                                                              \end{bmatrix}.}
  \end{eqnarray*}
  Therefore, the transformation generator for simplifying the vector field in grade 2 is given by \(X_1:=-\frac{1}{2}\mathscr{P}^2_{1, 0}.\) Then, the normal form vector field that is normalized up to grade 3 is
  \begin{eqnarray}\nonumber
 &\Theta+2\mathscr{P}_{0, 1}^2\!+\!\mathscr{P}_{0, 1}^1\!+\!2\mathscr{P}_{1, 0}^1\!-\!\mathscr{P}_{1, 0}^2+
 3\mathscr{P}_{ 2, 0}^2+\mathscr{R}_{2, 0}^1
-\frac{1}{3}\mathscr{P}_{2, 1}^1+\frac{2}{3}\mathscr{P}_{2, 1}^2+\mathscr{R}_{3, 0}^1&\\\label{v2.2}
&+\frac{7}{6}\mathscr{P}_{3, 1}^1+\frac{1}{6}\mathscr{P}_{3, 1}^2+\mathscr{P}_{4, 0}^2+\mathscr{R}_{4, 0}^2.&
  \end{eqnarray}

\item [2.] \emph{Third-grade term}s: The second row of the matrix \(\mathcal{A}_{3, 1}^{2, 0}\) is a zero vector, and coefficients follow
\begin{eqnarray*}
      &(\alpha^1_i,\alpha^2_i, \beta^1_i, \beta^2_i)=(0, 0, 0, 0) \hbox{  when  } i=0, 1,&\\
      &(\alpha^1_2,\alpha^2_2, \beta^1_2, \beta^2_2)^\intercal=\left(\widehat{\mathcal{A}_{3, 1}^{2, 0}}\right)^{-1}\left(a^1_{2, 1}, a^2_{3, 0}, b^1_{2, 1}, b^2_{2, 1} \right)^\intercal=
    \Scale[0.8]{\begin{bmatrix}
        4 & -2 & 0 & 0 \\
        2 & 5 & 0 & 0 \\
        0 & 0 & 4 & 0 \\
        0 & 0 & 0 & 4
      \end{bmatrix}^{-1}\begin{bmatrix}
                          -\frac{1}{3} \\
                          1 \\
                          0 \\
                          0
                        \end{bmatrix}=\begin{bmatrix}
                          -\frac{1}{54} \\
                          \frac{7}{54} \\
                          0 \\
                          0
                        \end{bmatrix}  }. &
\end{eqnarray*}

Then, the transformation generator \(X_2:=-\frac{1}{54}\mathscr{P}_{2, 0}^1+\frac{7}{54}\mathscr{P}_{2, 0}^2\) is derived. Therefore, the vector field in \eqref{v2.2} is transformed to
\begin{eqnarray}\nonumber
 &\Theta+2\mathscr{P}_{0, 1}^2\!+\!\mathscr{P}_{0, 1}^1\!+\!2\mathscr{P}_{1, 0}^1\!-\!\mathscr{P}_{1, 0}^2+
 3\mathscr{P}_{ 2, 0}^2+\mathscr{R}_{2, 0}^1+
\frac{2}{3}\mathscr{P}_{2, 1}^2+\frac{2}{27}\mathscr{P}_{3, 0}^1&\\\label{v3.32}
&+\frac{7}{6}\mathscr{P}_{3, 1}^1+\frac{1}{6}\mathscr{P}_{3, 1}^2+\frac{7}{9}\mathscr{P}_{4, 0}^2-\frac{2}{27}\mathscr{R}_{4, 0}^1+\mathscr{R}_{4, 0}^2.&
\end{eqnarray}
  \item[3.]\emph{ Terms of grade \(n\geq 4\)}: Matrix \(\mathcal{A}_{n, 1}^{j, 0}\) is invertible for all integer values \(n\geq 4\) and \(0\leq j\leq n-1\). Hence, the coefficients of the transformation generator for simplifying 4-th grade terms are expressed by
      \begin{eqnarray*}
      &(\alpha^1_i,\alpha^2_i, \beta^1_i, \beta^2_i)=(0, 0, 0, 0) \hbox{  when  } i=0, 1, 2,&\\
      &(\alpha^1_2,\alpha^2_2, \beta^1_2, \beta^2_2)^\intercal=\left({\mathcal{A}_{4, 1}^{3, 0}}\right)^{-1}\left(a^1_{3, 1}, a^2_{3, 0}, b^1_{3, 1}, b^2_{3, 1} \right)^\intercal=
      \Scale[0.8]{\begin{bmatrix}
        6 & -2 & 0 & 0 \\
        0 & 2 & 0 & 0 \\
        0 & 0 & 6 & 0 \\
        0 & 0 & 0 & 6
      \end{bmatrix}^{-1}\begin{bmatrix}
                       \frac{7}{6} \\
                          \frac{1}{6} \\
                          0 \\
                          0
                        \end{bmatrix}=\begin{bmatrix}
                                         \frac{2}{9} \\
                                         \frac{1}{12} \\
                                           0 \\
                                           0
                                       \end{bmatrix}.}
                        &
  \end{eqnarray*}
This gives rise to \(X_3:= \frac{2}{9}\mathscr{P}_{3, 0}^1+\frac{1}{12}\mathscr{P}_{3, 0}^2.\) The latter transforms the vector field \eqref{v3.32} into the truncated (\ie modulo terms of grade five and higher) second level normal form
\begin{eqnarray*}
 &v^{(2)}:=\Theta+2\mathscr{P}_{0, 1}^2\!+\!\mathscr{P}_{0, 1}^1\!+\!2\mathscr{P}_{1, 0}^1\!-\!\mathscr{P}_{1, 0}^2+
 3\mathscr{P}_{ 2, 0}^2+\mathscr{R}_{2, 0}^1+
\frac{2}{3}\mathscr{P}_{2, 1}^2+\frac{2}{27}\mathscr{P}_{3, 0}^1&\\
&-\frac{16}{9}\mathscr{P}_{4, 0}^1-\frac{2}{3}\mathscr{P}_{4, 0}^2-\frac{2}{27}\mathscr{R}_{4, 0}^1+\mathscr{R}_{4, 0}^2.&
\end{eqnarray*}
\end{itemize}
\end{exm}

\subsection{Case III: \(a^1_{0, 1}a^2_{0, 1}a_{1, 0}^1a_{1, 0}^2=0.\)}

When \(a^1_{1, 0}a^2_{1, 0}=0\) and \(a_{0,1}^1a_{0, 1}^2>0,\) the hypotheses in Lemma \ref{lem3} still hold and thus, in those cases, the second level normal forms follows Lemma \ref{lem3}. This is also true for Lemma \ref{lem3-1} when \(a^1_{0, 1}a^2_{0, 1}=0\) and \(a_{1, 0}^1a_{1, 0}^2>0\). When either \(\frac{a_{0, 1}^1}{a_{0, 1}^2}\) or \(\frac{a_{1, 0}^1}{a_{1, 0}^2}\in \mathbb{Q}^c,\) the results follow Lemma \ref{lem38}. Therefore, these cases will not be discussed again in this section.
When \(pa_{0, 1}^1+qa_{0, 1}^2=0\) for two co-prime positive integers \(p\) and \(q\), a zero value for \(a^1_{1, 0}\) or \(a^2_{1, 0}\) reduces the matrix dimension of \(\mathcal{A}_{m(p+q)+i,1}\) for \(i=1,2\) and \(m\in \mathbb{N}\). This influences the second level step of the normal form procedure.
\begin{lem}\label{lem4}
Let \(\frac{a_{0, 1}^1}{a_{0, 1}^2}=\frac{-q}{p}\) for \(\gcd(p, q)=1\) and \(p, q\in \mathbb{N}.\) Consider \(v^{(1)}\) in \eqref{classicalNF}. Then,
\begin{enumerate}
  \item\label{I1} If \(a^2_{1, 0}=0\) and \(a^1_{1, 0}\neq 0,\) the second level normal form is given by
  \begin{eqnarray}\label{SecondLevel5}
& \hspace{-2.2 cm}\Theta+\Sum{k\in\mathbb{N}_2}{}\Sum{i+j= 1}{}\left( b^k_{i,j}\mathscr{R}^k_{i,j}\!+\!a^k_{i,j}\mathscr{P}^k_{i,j}\right)\!+\!a^1_{0, 2}\mathscr{P}_{0, 2}^1\!+\!a^1_{0, 2}\mathscr{P}_{0, 2}^1\!\!+\!a^2_{2,0}\mathscr{P}^2_{2,0}\!+\!b^1_{2,0}\mathscr{R}^1_{2,0}\!+\!b^2_{2,0}\mathscr{R}^2_{2,0}+&\\[-4pt]
&\hspace{-0.5cm} \Sum{m\in \mathbb{N}, k\in \mathbb{N}_2}{}\Scale[0.9]{\Big(\Sum{j=3}{p+q} (a^k_{m(p+q)+j,0}\mathscr{P}^k_{m(p+q)+j, 0}\!+\! b^k_{m(p+q)+j, 0}\mathscr{R}^k_{m(p+q)+j, 0})\!+\!\Sum{k\in\mathbb{N}_2}{}\left(a^k_{mp, mq+k}\mathscr{P}^k_{mp, mq+k}\!+\!b^k_{mp, mq+k}\mathscr{R}^k_{mp, mq+k}\right)\Big)}.&\nonumber
\end{eqnarray}
  \item \label{I2} If \(a^1_{1, 0}=0\) and \(a^2_{1, 0}\neq 0,\) the second level normal form of \(v^{(1)}\) is
  \begin{eqnarray}\label{SecondLevel6}
&\hspace{-2 cm} \Theta\!+\!\Sum{k\in\mathbb{N}_2}{}\left( b^k_{0, 1}\mathscr{R}^k_{0, 1}\!+\!a^k_{0, 1}\mathscr{P}^k_{0, 1}\right)\!+\!\Sum{m\in\mathbb{Z}_{\geq 0}}{}\Scale[0.95]{\Sum{k\in\mathbb{N}_2, j\in\mathbb{N}_{p+q}}{} (a^k_{m(p+q)+j,0}\mathscr{P}^k_{m(p+q)+j, 0}\!+\! b^k_{m(p+q)+j, 0}\mathscr{R}^k_{m(p+q)+j, 0})}&\\[-6pt]
&\nonumber+\Sum{m\in\mathbb{Z}_{\geq 0}}{}\Sum{j\in Q, k\in\mathbb{N}_2}{}(a^k_{j, m(p+q)+1-j}\mathscr{P}^k_{j, m(p+q)+1\!-\!j}\!+\!b^k_{j, m(p+q)+1\!-\!j}\mathscr{R}^k_{j, m(p+q)+1-j})\!+\!\Sum{m\in\mathbb{Z}_{\geq 0}}{}a^2_{mp, mq+2}\mathscr{P}^2_{mp, mq+2}, &
\end{eqnarray} where \(Q:=\{mp, m(p+q)+1\}\) and \(a^1_{mp, mq+1}=a^1_{m(p+q)+1, 0}=0.\)
  \end{enumerate}
\end{lem}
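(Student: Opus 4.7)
The approach follows the block-matrix rank analysis of Lemma~\ref{lem1}, adapted to the extra degeneracies in $\mathcal{A}_{n,1}^{j,1}$ that arise when one of $a^1_{1,0}$, $a^2_{1,0}$ vanishes. I describe Claim~\ref{I1} in detail; Claim~\ref{I2} is structurally symmetric.

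\textbf{Block structure under $a^2_{1,0}=0$ and $a^1_{1,0}\neq 0$.} Substituting into \eqref{BAn1} makes $\mathcal{A}_{n,1}^{j,1}$ lower-triangular with diagonal $(2(j-1)a^1_{1,0},2ja^1_{1,0},2ja^1_{1,0},2ja^1_{1,0})$, hence invertible for $j\geq 2$, of rank $3$ at $j=1$, and of rank $1$ at $j=0$. The blocks $\mathcal{A}_{n,1}^{j,0}$ are unchanged from \eqref{BAn0}; under $pa^1_{0,1}+qa^2_{0,1}=0$ they degenerate exactly at $j=mp$ when $n=m(p+q)+1$ (first column vanishes, so rank $1$) or when $n=m(p+q)+2$ (the $(2,2)$-entry vanishes). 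These are the only sources of rank deficit in $\mathcal{A}_{n,1}$, and the proof proceeds by handling each grade $n$ according to whether it is resonant (of the form $m(p+q)+1$ or $m(p+q)+2$) or non-resonant.

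\textbf{Grade-by-grade analysis for Claim \ref{I1}.} The grade $n=2$ case is resolved by direct inspection: after a row/column permutation of $\mathcal{A}_{2,1}$ that exposes an invertible sub-block $\hat{\mathcal{A}}_{2,1}$, the cokernel singles out exactly $\mathscr{P}^1_{0,2},\mathscr{P}^2_{0,2},\mathscr{P}^2_{2,0},\mathscr{R}^1_{2,0},\mathscr{R}^2_{2,0}$ as the non-removable grade-$2$ terms, and the generator coefficients are determined by formulas analogous to \eqref{n=2Coefs1}-\eqref{n=2Coefs2}. For a non-resonant grade $n=m(p+q)+j$ with $m\geq 1$ and $3\leq j\leq p+q$, every diagonal block $\mathcal{A}_{n,1}^{i,0}$ is invertible, so $\mathcal{A}_{n,1}$ has block lower-bidiagonal form with invertible diagonal blocks, giving full column rank $4n$; only the four pure-$\rho_1$ coefficients at $i=n$ survive, and the ascending recursion \eqref{AscRec} supplies the $\alpha_i,\beta_i$. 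At a resonant grade $n=m(p+q)+1$ with $m\geq 1$, the unique singular diagonal block is $\mathcal{A}_{n,1}^{mp,0}$ of rank $1$, while $\mathcal{A}_{n,1}^{j,1}$ is invertible for $j\geq 2$; I propose to combine the ascending recursion \eqref{AscRec} up to $j=mp-1$ with the descending recursion \eqref{DecRec} from $j=mp+1$ down to $j=n-1$, and to fix the critical-index parameter $\alpha_{mp}$ as in \eqref{219}-\eqref{eq5} using the one surviving column of $\mathcal{A}_{n,1}^{mp,0}$ (with the subcase $mp=1$ handled by a separate row/column swap). A rank count of the reduced matrix $\hat{\mathcal{A}}_{n,1}$ then verifies that exactly $\mathscr{P}^1_{mp,mq+1},\mathscr{R}^1_{mp,mq+1}$ remain unsimplified at this grade; the resonant grade $n=m(p+q)+2$ is treated analogously, yielding $\mathscr{P}^2_{mp,mq+2},\mathscr{R}^2_{mp,mq+2}$. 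Aggregating across all $m$ and $j$ produces \eqref{SecondLevel5}.

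\textbf{Claim \ref{I2} and main obstacle.} In Claim \ref{I2} the roles swap: with $a^1_{1,0}=0$ and $a^2_{1,0}\neq 0$, the block $\mathcal{A}_{n,1}^{j,1}$ is invertible for $j\leq n-2$ but collapses to rank $1$ at $j=n-1$ (only its first column survives). Consequently, the descending recursion breaks down at the terminal row $i=n$, where the generator $\alpha_{n-1}$ can eliminate only a single linear combination of the four pure-$\rho_1$ targets; this is why \eqref{SecondLevel6} retains pure-$\rho_1$ monomials at \emph{every} grade $m(p+q)+j$ with $1\leq j\leq p+q$ rather than only $j\geq 3$. At each resonant grade $n=m(p+q)+1$, the combined degeneracy forces the enlarged index set $Q=\{mp,m(p+q)+1\}$ together with the explicit constraints $a^1_{mp,mq+1}=a^1_{m(p+q)+1,0}=0$, which are precisely the coefficients absorbed by the one nonzero column of the relevant degenerate block. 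The main technical obstacle in both claims is the bookkeeping at the critical indices where a rank-$1$ diagonal block must be combined with the adjacent subdiagonal block: one must verify that the explicit compatibility relations forced by the degeneracies produce exactly the prescribed residuals and no others, and supply closed-form recursive formulas analogous to \eqref{219}-\eqref{eq7} for the remaining $\alpha_j,\beta_j$.
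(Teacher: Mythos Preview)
Your approach is essentially the paper's: identify which blocks $\mathcal{A}_{n,1}^{j,0}$ and $\mathcal{A}_{n,1}^{j,1}$ degenerate under each hypothesis, then solve for the generator coefficients via the ascending and descending recursions \eqref{AscRec}--\eqref{DecRec} together with special formulas at the critical indices. The paper is terser---for Item~\ref{I1} it simply observes that the resonant grades $m(p+q)+1$, $m(p+q)+2$ and the non-resonant grades reduce to the $P_5$, $P_6$, $P_7$ cases already treated in Lemma~\ref{Lem2.4}, and for Item~\ref{I2} it gives the explicit values $\alpha^1_{n-1}=-a^2_{n,0}/2a^2_{1,0}$, $\beta^k_{n-1}=0$ and invokes \eqref{eq5} with $mr$ replaced by $n-1$---but the underlying mechanism is the same.

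One concrete slip: at the resonant grade $n=m(p+q)+1$ in Claim~\ref{I1} (with $mp\geq 2$), the full column-rank argument you sketch actually gives $\rank\,\mathcal{A}_{n,1}=4n$, so the cokernel has dimension $4(n+1)-4n=4$, not $2$. The residuals are $\mathscr{P}^k_{mp,mq+1}$ and $\mathscr{R}^k_{mp,mq+1}$ for \emph{both} $k=1,2$, exactly as in the $P_5$ case of Lemma~\ref{Lem2.4}; likewise four terms survive at $n=m(p+q)+2$. This is the intended reading of \eqref{SecondLevel5} once the inner and outer sums over $k$ are disentangled. Also, your descending recursion should run from $i=n$ down to $i=mp+2$ (using $(\mathcal{A}_{n,1}^{i-1,1})^{-1}$ for $i-1\geq 2$), not ``from $j=mp+1$ down to $j=n-1$'' as written.
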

\begin{proof}
Item 1. Since \(a^2_{1, 0}=0\), matrices of \(\mathcal{A}_{n, 1}^{0, 1}\) and \(\mathcal{A}_{n, 1}^{1, 1}\) for every integer \(n\geq 2\) are singular while the matrices \(\mathcal{A}_{n, 1}^{i, 1}\) for \(2\leq i\leq n-1\) are invertible. Recall that \(n\) is the grade of the intended terms for normalization. If \(n=m(p+q)+k\) for \(m\in \mathbb{N}, k=1, 2,\) the matrices \(\mathcal{A}_{n, 1}^{mp, 0}\) and \(\mathcal{A}_{n+1, 1}^{mp, 0}\) are singular due to a similar argument in the proof of Lemma \ref{Lem2.4}.
Further, matrices \(\mathcal{A}_{n+k, 1}^{i, 0}\) for \(0\leq i\leq n+k-1, i\neq mp\) and \(k=1, 2\) are invertible. These are associated with
\(n\in P_5\) and \(n\in P_6\) for \(k=1,2\) in Lemma \ref{Lem2.4}. When \(3\leq k\leq p+q,\) matrices \(\mathcal{A}_{n, 1}^{i, 0}\) for \(0\leq i\leq n-1\) are invertible and this leads to \(n\in P_7.\) Therefore, the second level normal form follows equation \eqref{SecondLevel5}.

Item 2. Let \(n=m(p+q)+1\). Hence, the diagonal entries of \(\mathcal{A}_{n, 1}^{n-1, 1}\) for every integer \(n\geq 2\) are zero. Yet, the matrices \(\mathcal{A}_{n, 1}^{k, 0}\) for \(0\leq k\leq n-1, k\neq mp\) are invertible. Due to the degeneracy of matrices \(\mathcal{A}_{n, 1}^{mp, 0}\) and \(\mathcal{A}_{n, 1}^{n-1, 1},\) \(\Null(\mathcal{A}_{n, 1})=2.\)
Coefficients for \(0\leq i\leq mp-1\) and \( mp+1\leq i\leq n-2\) are derived from \eqref{AscRec},
\begin{eqnarray*}\label{eq8}
&\alpha_{n-1}^1=\frac{-a^2_{n, 0}}{2a^2_{1, 0}}, \quad \alpha_{n-1}^2=\frac{a^1_{n-1, 1}-\alpha_{n-1}^1(n-1)a^1_{0, 1}-\alpha^1_{n-2}a^2_{1, 0}}{2a^1_{0, 1}},\quad \beta^1_{n-1}=\beta^2_{n-1}=0,&
\end{eqnarray*}
and values of \((\alpha^1_{mp}, \alpha^2_{mp}, \beta^1_{mp}, \beta^2_{mp})\) are obtained from equations \eqref{eq5}. However, \(mr\) must be replaced with \(n-1.\)
Therefore, Eulerian and rotational terms  \(\mathscr{P}^2_{mp, mq+1}, \mathscr{P}^1_{n, 0}, \mathscr{R}^1_{j, n-j}, \mathscr{R}^2_{j, n-j}\) when \(j=mp, n\) cannot  necessarily be eliminated from \(v^{(1)}\) in the second level step. Assume that \(n=m(p+q)+2.\)  Due to the singularities of \(\mathcal{A}_{n, 1}^{mp, 0}\) and \(\mathcal{A}_{n, 1}^{n-1, 1},\) \(\rank (\mathcal{A}_{n, 1})= 4n-1.\) Since \(\mathcal{A}_{n, 1}^{i, 0}\) is invertible for \(0\leq i\leq n-1\) and \(i\neq mp\), the relations \eqref{AscRec} and \eqref{eq7} when \(0\leq i\leq n-1\) and \(i\neq mp\) lead to the elimination of all terms \(\mathscr{P}^k_{j, n-j}, \mathscr{R}^k_{j, n-j}\) for \(k=1, 2, 0\leq j\leq n-1\) with only one exception, that is, \(\mathscr{P}^2_{mp, mq+1}.\) Hence, the second level normal form is expressed as in equation \eqref{SecondLevel6}.
\end{proof}

\begin{lem}Let \(a^1_{0, 1}=0\) and \(a^2_{0, 1}\neq 0.\)
\begin{enumerate}
\item When \(a^1_{1, 0}=0\) and \(a^2_{1, 0}\neq 0,\) the second level normal form is expressed by
\begin{eqnarray}\label{SecondLevel7-1}
\Theta\!+\!\Sum{k\in\mathbb{N}_2}{}\Sum{i+j= 1}{}\left(a^k_{i,j}\mathscr{P}^k_{i,j}\!+\!b^k_{i,j}\mathscr{R}^k_{i,j}\right)+\!\Sum{k\in\mathbb{N}_2}{}\Sum{n=2}{\infty}\Sum{j=0}{2}\Scale[0.95]{ (a^k_{n-j,j}\mathscr{P}^k_{n-j,j}\!+\! b^k_{n-j,j}\mathscr{R}^k_{n-j,j})},
\end{eqnarray}
 where \(a_{n-2, 2}^1=b_{n-2, 2}^1=b_{n-2, 2}^2=a_{n-1, 1}^2=a_{n, 0}^2=0\) for every integer number \(n\geq 2.\)
\item Assume that \(a^2_{1, 0}=0\) and \(a^1_{1, 0}\neq 0.\) Therefore, the second level normal form follows
\begin{eqnarray}\label{SecondLevel7-2}
&\hspace{-1 cm}\Theta\!+\!\Sum{k\in\mathbb{N}_2}{}\Sum{i+j= 1}{}\left(a^k_{i,j}\mathscr{P}^k_{i,j}\!+\!b^k_{i,j}\mathscr{R}^k_{i,j}\right)\!+\!a^2_{0, 2}\mathscr{P}^2_{0, 2}\!+\!a^2_{2, 0}\mathscr{P}^1_{2, 0}\!+\!\Sum{k\in\mathbb{N}_2}{}(a^k_{1, 1}\mathscr{P}^k_{1, 1}\!+\!b^k_{1, 1}\mathscr{R}^k_{1, 1})\!+\!a^1_{2, 1}\mathscr{P}^1_{2, 1}&\\
&\nonumber +\!\Sum{k\in\mathbb{N}_2}{}\Sum{n=3}{\infty} (a^k_{n-2,2}\mathscr{P}^k_{n-2,2}\!+\! b^k_{n-2,2}\mathscr{R}^k_{n-2,2}),\quad\hbox{where}\quad a^1_{1, 2}=0.&
\end{eqnarray}
\end{enumerate}
\end{lem}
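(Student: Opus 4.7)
The plan is to follow the representation-matrix strategy of Lemmas \ref{lem1}--\ref{lem3-1}: specialize the blocks $\mathcal{A}_{n,1}^{j,0}$ and $\mathcal{A}_{n,1}^{j,1}$ of \eqref{BAn0}--\eqref{BAn1} under each hypothesis, locate the vanishing rows and columns, form an invertible reduction $\widehat{\mathcal{A}}_{n,1}$ by deleting the rows labelled by the surviving coefficients and the accompanying dependent columns, and then run the recursions \eqref{AscRec}--\eqref{DecRec} to produce a transformation generator $X_{n-1}$ that removes every term outside the claimed complement.

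For item~(1), with $a^1_{0,1}=a^1_{1,0}=0$ and $a^2_{0,1},a^2_{1,0}\neq 0$, inspection of \eqref{BAn0} shows that $\mathcal{A}_{n,1}^{j,0}$ collapses to rank~$1$ at $j=n-1$ (every entry $(n-j-1)a^2_{0,1}$ vanishes) and to rank~$3$ at $j=n-2$ (only the row-$2$ entry $(n-j-2)a^2_{0,1}$ vanishes), while from \eqref{BAn1} the block $\mathcal{A}_{n,1}^{j,1}$ loses its diagonal only at $j=n-1$ and is invertible otherwise. A forward sweep along the bidiagonal in \eqref{An1} forces $x^{(j)}=\0$ for $0\le j\le n-3$, then produces a one-parameter kernel of $\mathcal{A}_{n,1}^{n-2,0}$, couples that parameter through the invertible $\mathcal{A}_{n,1}^{n-2,1}$ to the column-$2$ coordinate of $x^{(n-1)}$, and uses the surviving first column of $\mathcal{A}_{n,1}^{n-1,1}$ to pin the column-$1$ coordinate of $x^{(n-1)}$ while leaving columns $3,4$ free. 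This yields $\Null\,\mathcal{A}_{n,1}=3$ and $\rank\,\mathcal{A}_{n,1}=4n-3$, so that the seven-dimensional complement is spanned precisely by the coefficients $a^1_{n,0},b^1_{n,0},b^2_{n,0},a^1_{n-1,1},b^1_{n-1,1},b^2_{n-1,1}$ and $a^2_{n-2,2}$ in \eqref{SecondLevel7-1}; the generator $X_{n-1}$ is then assembled from \eqref{AscRec} for $0\le j\le n-3$ together with reduced inversions of $\widehat{\mathcal{A}_{n,1}^{j,0}},\widehat{\mathcal{A}_{n,1}^{j,1}}$ at $j=n-2,n-1$.

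For item~(2) the degeneracies shift to low $j$: under $a^1_{0,1}=a^2_{1,0}=0$ with $a^2_{0,1},a^1_{1,0}\neq 0$, the singular pattern of $\mathcal{A}_{n,1}^{j,0}$ at $j=n-1,n-2$ is unchanged, but \eqref{BAn1} now gives $\rank\,\mathcal{A}_{n,1}^{0,1}=1$ (the diagonals $ja^1_{1,0}$ vanish in rows $2$--$4$) and $\rank\,\mathcal{A}_{n,1}^{1,1}=3$ (the entry $(j-1)a^1_{1,0}$ in row $1$ vanishes), with invertibility for $j\ge 2$. For $n\ge 4$ a backward sweep through the invertible $\mathcal{A}_{n,1}^{j,1}$ ($j\ge 2$) combined with the forward invertibility of $\mathcal{A}_{n,1}^{0,0}$ and $\mathcal{A}_{n,1}^{1,0}$ forces $x^{(j)}=\0$ throughout, so $\Null\,\mathcal{A}_{n,1}=0$ and only the four coefficients $a^k_{n-2,2},b^k_{n-2,2}$ ($k=1,2$) survive, matching the infinite sum in \eqref{SecondLevel7-2}; the generator is produced from \eqref{DecRec} for $j\ge 2$ and reduced inversions at $j=0,1$. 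Low grades are bookkept separately: the collision $j=n-2=0$ in $\mathcal{A}_{2,1}^{0,0}$ with $j=0$ in $\mathcal{A}_{2,1}^{0,1}$ at $n=2$ leaves $a^2_{0,2},a^2_{2,0}$ and the four $(1,1)$-coefficients, while at $n=3$ the simultaneous vanishings at $j=1=n-2$ in $\mathcal{A}_{3,1}^{1,0}$ and at $j=1$ in $\mathcal{A}_{3,1}^{1,1}$ leave $a^1_{2,1},a^2_{1,2},b^1_{1,2},b^2_{1,2}$ while the common kernel direction forces the relation $a^1_{1,2}=0$ asserted in the statement.

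The main obstacle is the bidiagonal compatibility: in item~(1) one has to verify that the image of the rank-$1$ kernel of $\mathcal{A}_{n,1}^{n-2,0}$ under $\mathcal{A}_{n,1}^{n-2,1}$ lies inside the image of the adjacent $\mathcal{A}_{n,1}^{n-1,0}$, and symmetrically at the low-$j$ end in item~(2). This is exactly the product-of-inverses compatibility already exploited in \eqref{eq5}--\eqref{eq6} of the proof of Lemma~\ref{lem1}; once verified, the explicit recursive assignment of $(\alpha^k_j,\beta^k_j)$ is a routine adaptation of Lemmas~\ref{lem3} and~\ref{lem3-1} and is omitted from this sketch.
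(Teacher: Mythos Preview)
Your proposal is correct and follows essentially the same approach as the paper: both specialize the bidiagonal block structure \eqref{An1}--\eqref{BAn1} under the hypotheses, locate the degenerate blocks (at $j=n-2,\,n-1$ in the $\mathcal{A}^{j,0}$ family from $a^1_{0,1}=0$, and at $j=n-1$ respectively $j=0,1$ in the $\mathcal{A}^{j,1}$ family), compute the rank/nullity, and assemble $X_{n-1}$ via the recursions \eqref{AscRec}--\eqref{DecRec} with reduced inversions at the degenerate indices. The paper gives explicit closed-form coefficient assignments where you defer to the recursion scheme, but the underlying argument is the same.

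One phrasing slip: at $n=3$ in item~(2) you write that ``the common kernel direction forces the relation $a^1_{1,2}=0$.'' It is the \emph{absence} of a common kernel direction---the one-dimensional kernels of $\mathcal{A}_{3,1}^{1,0}$ and $\mathcal{A}_{3,1}^{1,1}$ intersect trivially---that keeps $\rank\mathcal{A}_{3,1}=12$ and therefore places $\mathscr{P}^1_{1,2}$ in the removable space.
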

\begin{proof}
Item 1. Since \(a^1_{0, 1}=0\) and \(a^2_{0, 1}\neq 0,\) the second row of matrix \(\mathcal{A}_{n ,1}^{n-2, 0},\) the first, third and fourth columns of \(\mathcal{A}_{n ,1}^{n-1, 0}\) are all zero vectors. From \(a^1_{1, 0}=0\) and \(a^2_{1, 0}\neq 0,\) we deduce that all diagonal entries of \(\mathcal{A}_{n ,1}^{n-1, 1}\) are zero.
Thereby, \(\ker(\mathcal{A}_{n ,1})=4n-3.\) When \(n\geq 3,\) due to the non-degeneracy of matrices \(\mathcal{A}_{n ,1}^{i, 0}\) for \(0\leq i\leq n-3,\) the coefficients can be assigned through \eqref{AscRec}. The remaining coefficients are determined by \(\alpha^2_{n-2}=\beta^1_{n-1}=\beta^2_{n-1}:=0\),
\begin{eqnarray*}
&\alpha^1_{n-2}=\frac{a^1_{n-2,2}-4a^2_{1, 0}\alpha^1_{n-3}}{2a^2_{0, 1}}, \quad
      (\beta^1_{n-2}, \beta^2_{n-2})=\frac{(b^1_{n-2,1}, b^2_{n-2,1})-4a^2_{1, 0}(\beta^1_{n-3}, \beta^2_{n-3})+2(b^1_{1, 0}, b^2_{1, 0})\alpha^1_{n-3}}{2a^2_{0, 1}},&\\
&\alpha^1_{n-1}=-\frac{a^2_{n,0}}{2a^2_{0, 1}},\quad \alpha^2_{n-1}=-\frac{a^2_{n-1,1}-2a^2_{1, 0}\alpha^2_{n-2}+2b^2_{1, 0}\alpha^1_{n-2}}{2a^2_{0, 1}}.&
\end{eqnarray*}
For \(n=2,\) the coefficients are derived similar the above relations where \(\alpha^1_{n-3}=0.\)
 Hence, terms \(\mathscr{P}^2_{n-2, 2},\mathscr{P}^1_{n-j, j}, \mathscr{R}^k_{n-j, j}\) for \(j=0, 1\) and \(k=0, 1,\) cannot  necessarily be simplified.

Item 2. The second, third and fourth columns of \(\mathcal{A}_{n ,1}^{0, 1}\) are zero vectors. Further, the \((1, 1)\)-entry from \(\mathcal{A}_{n ,1}^{1, 1}\) is zero. Let \(n=2.\) Except the second and fifth columns of \(\mathcal{A}_{n, 1},\) the other columns are linearly independent. Thus, six terms of grade 2 can be simplified. To this end, it suffices to assign the coefficients as \(\alpha^2_{0}=\alpha^1_{1}:=0,\)
\begin{eqnarray*}
&\left(\alpha^1_0, \beta^1_0, \beta^2_0\right)=\frac{\left(a^1_{0, 2}, b^1_{0, 2}, b^1_{0, 2}\right)}{2a_{0, 1}^2},\quad\hbox{ and }\quad \left(\alpha^2_1, \beta^1_1, \beta^2_1\right)=\frac{\left(a^2_{2, 0}, b^1_{2, 0}, b^1_{2, 0}\right)}{2a_{1, 0}^2}.&
\end{eqnarray*} Hence, terms \(\mathscr{P}^1_{0, 2},\) \(\mathscr{P}^2_{2, 0},\) \(\mathscr{R}^k_{2, 0}\) and \(\mathscr{R}^k_{0, 2}\) for \(k=1, 2\) are simplified. Assume that \(n\geq 3.\) Here,  \(\rank(\mathcal{A}_{n, 1})=4n\) due to \(\ker(\mathcal{A}_{n, 1})=\{0\}.\) When \(n=3,\) coefficients are computed by the relations
\begin{eqnarray*}
&(\alpha^1_{0}, \alpha^2_{0}, \beta^1_{0}, \beta^2_{0})^\intercal\!=\!(\mathcal{A}_{n ,1}^{0, 0})^{-1}(a^1_{0, 3}, a^2_{0, 3}, b^1_{0, 3}, b^2_{0, 3})^\intercal,\,\,
(\alpha^1_{2}, \alpha^2_{2}, \beta^1_{2}, \beta^2_{2})^\intercal\!=\!(\mathcal{A}_{n ,1}^{2, 1})^{-1}(a^1_{3, 0}, a^2_{3, 0}, b^1_{3, 0}, b^2_{3, 0})^\intercal,&\\
&\alpha^1_1=\frac{a^1_{1, 2}+2a^1_{1, 0}\alpha^1_0}{2a^2_{0, 1}}, \quad (\alpha^2_{1}, \beta^1_1, \beta^2_1)=\frac{(a^2_{2, 1}, b^1_{2, 1}, b^2_{2, 1})+2(a_{0, 1}^2, b^1_{0, 1}, b^2_{0, 1})}{2a^2_{1, 0}}.&
\end{eqnarray*} Thereby, terms \(\mathscr{P}^2_{1, 2}, \mathscr{P}^1_{2, 1}, \mathscr{R}^k_{1, 2}\) for \(k=1, 2\) cannot be simplified. Let \(n>3.\)  For \(0\leq i\leq n-3,\) we use the relation \eqref{AscRec} while when \(n\geq i\geq n-1\) the relation \eqref{DecRec} is applied. Therefore, terms \(\mathscr{P}^k_{n-2, 2}\) and \(\mathscr{R}^k_{n-2, 2}\) for \(k=1, 2\) may remain in the normal form system.
\end{proof}

\begin{lem} Assume that \(a^2_{0, 1}=0\) and \(a^1_{0, 1}\neq 0.\)
\begin{enumerate}
\item If \(a^1_{1, 0}=0\) and \(a^2_{1, 0}\neq 0,\) the second level normal form is given by
\begin{eqnarray}\label{SecondLevel8}
\hspace{-0.65 cm}\Scale[0.95]{\Theta\!+\!a^1_{0, 1}\mathscr{P}^1_{0, 1}\!+\!a^2_{1, 0}\mathscr{P}^2_{1, 0}\!+\!\Sum{k\in\mathbb{N}_2}{}b^k_{k-1, 2-k}\mathscr{R}^k_{k-1, 2-k}\!+\!\Sum{n=2}{\infty} (a^2_{0, n}\mathscr{P}^2_{0, ,0}\!+\!a^1_{n, 0}\mathscr{P}^1_{n, ,0}\!+\! \Sum{j\in\{0,n\}}{}\Sum{k\in\mathbb{N}_2}{}b^k_{n-j,j}\mathscr{R}^k_{n-j,j})}.
\end{eqnarray}
\item When \(a^2_{1, 0}=0\) and \(a^1_{1, 0}\neq 0,\) the second level normal form follows
\begin{eqnarray}\label{SecondLevel9}
\hspace{-0.65 cm}\Scale[0.88]{\Theta\!+\!a^1_{0, 1}\mathscr{P}^1_{0, 1}\!+\!a^1_{1, 0}\mathscr{P}^1_{1, 0}\!+\!\!\Sum{k\in\mathbb{N}_2}{}b^k_{k-1, 2-k}\mathscr{R}^k_{k-1, 2-k}+\!\!\Sum{n=2}{\infty} \big(a^2_{0, n}\mathscr{P}^2_{0, n}\!+\!\!\Sum{k\in\mathbb{N}_2}{}(b^k_{2, n-2}\mathscr{R}^k_{2, n-2}\!+\!a^k_{2, n-2}\mathscr{P}^k_{2, n-2}\!+\! b^k_{0, n}\mathscr{R}^k_{0, n})\big)}.
\end{eqnarray}
\end{enumerate}
\end{lem}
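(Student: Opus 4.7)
The plan is to adapt the method of indeterminate coefficients developed in Lemmas \ref{lem1}--\ref{lem4} to the present degeneracy pattern. In both items the common hypotheses \(a^2_{0,1}=0\) and \(a^1_{0,1}\neq 0\) make each block \(\mathcal{A}_{n,1}^{j,0}\) in \eqref{BAn0} lower triangular with diagonal entries \(-2ja^1_{0,1}\). Hence \(\mathcal{A}_{n,1}^{j,0}\) is invertible for \(1\leq j\leq n-1\) and collapses to rank one at \(j=0\), where only the second column (encoding \(\alpha^2_0\)) survives.

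For Item 1, the hypotheses \(a^1_{1,0}=0\), \(a^2_{1,0}\neq 0\) turn each \(\mathcal{A}_{n,1}^{j,1}\) in \eqref{BAn1} into a lower-triangular matrix with diagonal entries \(-2(n-j-1)a^2_{1,0}\). This block is invertible for \(0\leq j\leq n-2\) and collapses to rank one at \(j=n-1\), with only the first column (encoding \(\alpha^1_{n-1}\)) surviving. I would then use the ascending recursion \eqref{AscRec} on the invertible middle blocks to propagate the coefficients, set \(\alpha^2_0\) so as to kill \(a^1_{0,n}\) via the unique live column of \(\mathcal{A}_{n,1}^{0,0}\), and use \(\alpha^1_{n-1}\) to kill \(a^2_{n,0}\) via the unique live column of \(\mathcal{A}_{n,1}^{n-1,1}\). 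A null-space count mirroring \eqref{eq6} in the proof of Lemma \ref{lem1} yields \(\Null\, \mathcal{A}_{n,1}=2\), hence \(\rank\, \mathcal{A}_{n,1}=4n-2\), matching the six surviving grade-\(n\) monomials \(\mathscr{P}^2_{0,n},\mathscr{P}^1_{n,0},\mathscr{R}^k_{0,n},\mathscr{R}^k_{n,0}\) for \(k=1,2\) appearing in \eqref{SecondLevel8}. The base grade \(n=2\) is verified by direct substitution since the general recursion is set up for \(n\geq 3\).

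For Item 2, the hypotheses \(a^1_{1,0}\neq 0\), \(a^2_{1,0}=0\) turn each \(\mathcal{A}_{n,1}^{j,1}\) into a lower-triangular matrix with diagonal entries \(-2ja^1_{1,0}\). It is invertible for \(j\geq 2\), collapses to rank one at \(j=0\) (only the first column is non-zero), and at \(j=1\) has a vanishing \((1,1)\)-entry that drops its rank to three. Here I would run the descending recursion \eqref{DecRec} from \(\alpha_{n-1},\ldots,\alpha_2\) through the invertible blocks, and then handle \(\alpha_1\) and \(\alpha_0\) explicitly: \(\alpha^2_0\) controls one scalar in block row \(0\) through \(\mathcal{A}_{n,1}^{0,0}\), while \(\alpha^1_0\) feeds block row \(1\) through the lone live column of \(\mathcal{A}_{n,1}^{0,1}\). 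Propagating the two boundary kernels together with the missing \((1,1)\)-entry of \(\mathcal{A}_{n,1}^{1,1}\) yields \(\Null\, \mathcal{A}_{n,1}=3\), hence \(\rank\, \mathcal{A}_{n,1}=4n-3\), accounting for the seven surviving grade-\(n\) monomials \(\mathscr{P}^2_{0,n}, \mathscr{P}^k_{2,n-2}, \mathscr{R}^k_{2,n-2}, \mathscr{R}^k_{0,n}\) for \(k=1,2\) listed in \eqref{SecondLevel9}. The case \(n=2\) is dispatched by direct computation, in analogy with Lemma \ref{lem3}.

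The principal technical step, and the one to be treated carefully, is the boundary matching at the two ends of the block chain. One must rule out unintended rank losses in the chain products \((\mathcal{A}_{n,1}^{j,1})^{-1}\mathcal{A}_{n,1}^{j+1,0}\) that couple the rank-one boundary blocks, so that the non-vanishing entry carrying \(\alpha^2_0\) (respectively \(\alpha^1_0\)) through to the opposite end is indeed non-zero; this is a direct parallel to the non-zero \((2,2)\)-entry claim used in the proof of Lemma \ref{lem1}. Once that non-degeneracy is secured, the explicit coefficient assignments reduce to routine substitutions in \eqref{AscRec}--\eqref{DecRec}, and the monomials complementary to \(\rm{im}\, \mathcal{A}_{n,1}\) are precisely those recorded in \eqref{SecondLevel8} and \eqref{SecondLevel9}.
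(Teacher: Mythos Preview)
Your proposal is correct and follows essentially the same route as the paper: analyse the degeneracies of the blocks \(\mathcal{A}_{n,1}^{j,0}\) and \(\mathcal{A}_{n,1}^{j,1}\) under the hypotheses, compute \(\Null\,\mathcal{A}_{n,1}\) (equal to \(2\) in Item~1 and \(3\) in Item~2), and assign the free coefficients at the boundary blocks exactly as you describe.  Two small remarks: the blocks \(\mathcal{A}_{n,1}^{j,0}\) are not literally lower triangular (the \((1,2)\)-entry \(-a^1_{0,1}\) survives), though your invertibility conclusion for \(j\geq 1\) is unaffected; and the chain-product non-degeneracy in the spirit of \eqref{eq5}--\eqref{eq6} is genuinely needed only in Item~1, where the paper invokes \eqref{eq5} with \(mp=0\), \(mr=n-1\) to determine \(\alpha^1_0\) and \(\alpha^2_{n-1}\), whereas in Item~2 the paper simply runs the ascending step at \(j=0,1\) (using the invertible \(\mathcal{A}_{n,1}^{1,0}\)) and the descending recursion for \(j\geq 2\), with no coupling across the gap.
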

\begin{proof}
Claim 1. By \(a^2_{0, 1}=0\) and \(a^1_{0, 1}\neq 0,\) the first, third and fourth columns of matrix \(\mathcal{A}_{n ,1}^{0, 0}\) are zero vectors. Further, \(a^1_{1, 0}=0\) and \(a^2_{1, 0}\neq 0\) imply that all diagonal entries of \(\mathcal{A}_{n ,1}^{n-1, 1}\) are zero. From \(\Null(\mathcal{A}_{n, 1})=2,\) we conclude that \(\rank(\mathcal{A}_{n, 1})=4n-2\) for every \(n\ge 2.\) Then, coefficients are derived as
\begin{eqnarray*}
&\alpha^2_0=\frac{a^1_{0, n}}{2a^1_{0, 1}}, \quad \alpha^1_{n-1}=\frac{a^2_{n, 0}}{2a^2_{1, 0}}, \quad \beta^1_{n-1}=\beta^2_{n-1}=0.&
\end{eqnarray*} The coefficients \(\alpha^1_0\) and \(\alpha^2_{n-1}\) are derived from the first two rows of equation \eqref{eq5} where the values of \(mp\) and \(mr\) are respectively replaced by \(0\) and \(n-1\). Other coefficients for \(1\leq i\leq n-2\) and \(n\geq 3\) are determined by the relation \eqref{AscRec}. Hence, terms \(\mathscr{P}^2_{0, n}, \mathscr{P}^1_{n, 0}\) and \(\mathscr{R}^k_{n-j, j}\) for \(k=1, 2, j=0, n\) may remain in the second level normal form.

Claim 2. For every \(n\geq 2,\) we have \(\Null(\mathcal{A}_{n, 1})=4n-3.\) We take \(\alpha^1_0=\beta^1_0=\beta^2_0:=0\) to obtain
\begin{eqnarray*}
& \alpha^1_{1}=\frac{a^1_{0, n}}{2a^1_{0, 1}},\quad\hbox{ and }\quad (\alpha^1_{1}, \alpha^2_{1}, \beta^1_{1}, \beta^2_{1})^\intercal\!=\!(\mathcal{A}_{n ,1}^{1, 0})^{-1}(a^1_{1, n-1}, a^2_{1, n-1}, b^1_{1, n-1}, b^2_{1, n-1})^\intercal.&
\end{eqnarray*} Hence, the vector fields \(\mathscr{P}^1_{0, n}, \mathscr{P}^k_{j, n-j}\) and \(\mathscr{R}^k_{j, n-j}\) for \(k=1, 2, 0\leq j\leq n, j\neq 0, 2\) are simplified using the recursive relation \eqref{DecRec} when \(3\leq i\leq n.\)
\end{proof}
\begin{lem} Let \(a^1_{0, 1}=a^2_{0, 1}= 0\) and consider the differential system \eqref{pre-classical}.
\begin{enumerate}
  \item[I.] If \(a^1_{1, 0}=0\) and \(a^2_{1, 0}\neq 0,\) the second level normal form is derived as
  \begin{eqnarray*}\label{SecondLevel10}
  \hspace{-0.4 cm}\Theta+ a^2_{1, 0}\mathscr{P}^2_{1, 0}+\Sum{k\in\mathbb{N}_2}{}\Sum{j\in\{0,1\}}{}b^k_{j,1-j}\mathscr{R}^k_{j,1-j}+\!\Sum{k\in\mathbb{N}_2}{}\Sum{j\in\{0,n\}}{}\Sum{n=2}{\infty} (a^k_{j, n-j}\mathscr{P}^k_{j, n-j}\!+\! b^k_{n-j,j}\mathscr{R}^k_{n-j,j}), \hbox{ where }a^2_{n, 0}=0.
  \end{eqnarray*}
  \item[II.] When \(a^2_{1, 0}=0\) and \(a^1_{1, 0}\neq 0,\) the second level normal form follows
  \begin{eqnarray*}
  \hspace{-0.4 cm}\Theta+ a^1_{1, 0}\mathscr{P}^1_{1, 0}+\Sum{k\in\mathbb{N}_2}{}\Sum{j\in\{0,1\}}{}b^k_{j,1-j}\mathscr{R}^k_{j,1-j}+\!\Sum{k\in\mathbb{N}_2}{}\Sum{j\in\{0,1\}}{}\Sum{n=2}{\infty} (a^k_{j, n-j}\mathscr{P}^k_{j, n-j}\!+\! b^k_{n-j,j}\mathscr{R}^k_{n-j,j})+\!\Sum{n=2}{\infty}a^1_{2, n-2}\mathscr{P}^1_{2, n-2},
  \end{eqnarray*} where \(a^1_{1, n-1}=0.\) For \(b^1_{0, 1}\neq 0,\) \(b^1_{0, n}=0,\) \(b^1_{0, 1}=0,\) and if \(b^2_{0, 1}\neq 0,\) we have \(b^2_{0, n}=0.\)
\end{enumerate}
\end{lem}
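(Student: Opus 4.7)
The strategy for both parts mirrors the block-matrix analysis used in Lemma~\ref{lem4}: substitute the hypotheses into the entries of $\mathcal{A}_{n,1}^{j,0}$ and $\mathcal{A}_{n,1}^{j,1}$ from \eqref{BAn0}--\eqref{BAn1}, track which columns and rows collapse, read off $\rank\mathcal{A}_{n,1}$ from the resulting staircase, and then exhibit explicit assignments of the transformation coefficients $(\alpha^k_i,\beta^k_i)$ that attain this rank via the ascending/descending recursions \eqref{AscRec}--\eqref{DecRec}.

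For part I, $a^1_{0,1}=a^2_{0,1}=0$ forces every block $\mathcal{A}_{n,1}^{j,0}$ to retain only its second column (populated by $-b^1_{0,1}$ and $-b^2_{0,1}$), while $a^1_{1,0}=0$ with $a^2_{1,0}\neq 0$ makes $\mathcal{A}_{n,1}^{n-1,1}$ rank-one on its first column and leaves $\mathcal{A}_{n,1}^{j,1}$ invertible (lower triangular with diagonal entry $(n-j-1)a^2_{1,0}$) for $0\leq j\leq n-2$. A column count in the staircase then yields $\rank\mathcal{A}_{n,1}=4n-3$, so exactly seven grade-$n$ terms resist simplification, namely $\mathscr{P}^1_{n,0}$, $\mathscr{P}^k_{0,n}$, $\mathscr{R}^k_{n,0}$, and $\mathscr{R}^k_{0,n}$ for $k=1,2$. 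I would then iterate \eqref{AscRec} for $0\leq i\leq n-2$ through the invertible $\mathcal{A}_{n,1}^{i,1}$'s, and use the surviving first column of $\mathcal{A}_{n,1}^{n-1,1}$ together with the residual freedom in $(\alpha^1_{n-1},\alpha^2_{n-1},\beta^1_{n-1},\beta^2_{n-1})$ to absorb $a^2_{n,0}$, yielding the stated formula.

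For part II, $a^1_{1,0}\neq 0$ and $a^2_{1,0}=0$ make $\mathcal{A}_{n,1}^{0,1}$ and $\mathcal{A}_{n,1}^{1,1}$ degenerate (the former retains only a $-a^1_{1,0}$ entry and the $b^k_{1,0}$ column, the latter loses its first row), while $\mathcal{A}_{n,1}^{j,1}$ stays invertible for $2\leq j\leq n-1$. Solving by descending recursion \eqref{DecRec} from $i=n$ down to $i=2$, and then handling the top blocks by a direct inversion of a reduced $2\times 2$ system in $(\alpha^1_0,\alpha^1_1)$ to kill $a^1_{1,n-1}$, produces the expected persisting terms $\mathscr{P}^k_{0,n}$, $\mathscr{P}^k_{1,n-1}$, $\mathscr{R}^k_{n,0}$, $\mathscr{R}^k_{n-1,1}$, and $\mathscr{P}^1_{2,n-2}$, together with $a^1_{1,n-1}=0$.

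The main obstacle, as in Lemma~\ref{LemR}, is the conditional normalisation of the rotational coefficients at position $(0,n)$: the $\beta$-subsystem that controls them has coefficient matrix built solely from $b^1_{0,1}$ and $b^2_{0,1}$, so its rank and the specific direction it spans depend on which of these is nonzero. A short case split gives the dichotomy recorded in the statement, namely that $b^1_{0,1}\neq 0$ permits a choice of $\beta^1_0$ cancelling $b^1_{0,n}$, whereas $b^1_{0,1}=0$ combined with $b^2_{0,1}\neq 0$ leaves only the $b^2$-direction accessible and enforces $b^2_{0,n}=0$. The low-degree base cases $n=2,3$ are then verified separately by writing $\mathcal{A}_{n,1}$ explicitly and checking that its reduced invertible sub-block yields the same list of surviving terms, exactly as in the proofs of Lemmas~\ref{lem3}--\ref{lem4}.
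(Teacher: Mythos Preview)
Your approach is essentially the one taken in the paper: substitute the hypotheses into the blocks \eqref{BAn0}--\eqref{BAn1}, read off $\rank\mathcal{A}_{n,1}$, and solve for the transformation coefficients via the block recursions. Two details need correction, however.

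In Part I you propose to ``iterate \eqref{AscRec} for $0\leq i\leq n-2$ through the invertible $\mathcal{A}_{n,1}^{i,1}$'s.'' This cannot work: the ascending recursion \eqref{AscRec} inverts the blocks $\mathcal{A}_{n,1}^{i,0}$, and under $a^1_{0,1}=a^2_{0,1}=0$ every such block has rank at most one (only the entries $(3,2)$ and $(4,2)$ survive). The paper instead fixes $\alpha^1_{n-1}=\frac{a^2_{n,0}}{2a^2_{1,0}}$, $\alpha^2_{n-1}=\beta^1_{n-1}=\beta^2_{n-1}=0$ and runs the \emph{descending} recursion \eqref{DecRec} through the invertible $\mathcal{A}_{n,1}^{i,1}$ for $0\leq i\leq n-2$; that is the recursion you should cite.

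In Part II your diagnosis of the case split on $b^1_{0,1},b^2_{0,1}$ is correct, but the variable that effects the cancellation of $b^k_{0,n}$ is not $\beta^1_0$. From the structure constants \eqref{StructureConstants} one has $[\mathscr{P}^2_{0,n-1},\mathscr{R}^k_{0,1}]=2\mathscr{R}^k_{0,n}$, so it is $\alpha^2_0$ (the $\mathscr{P}^2$-coefficient) that couples to the rotational terms at $(0,n)$ via $b^k_{0,1}$. The paper accordingly sets $\alpha^2_0=-\frac{b^1_{0,n}}{2b^1_{0,1}}$ when $b^1_{0,1}\neq 0$, and $\alpha^2_0=-\frac{b^2_{0,n}}{2b^2_{0,1}}$ when $b^1_{0,1}=0,\,b^2_{0,1}\neq 0$. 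With these two fixes your sketch matches the paper's proof.
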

\begin{proof} Claim I. From the hypothesis \(a^1_{0, 1}=a^2_{0, 1}= 0,\)  except entries \((3, 2)\) and \((4, 2)\) from \(\mathcal{A}_{n, 1}^{i, 0}\), other elements are zero for \(0\leq i\leq n-1.\) Due to \(a^1_{1, 0}=0\) and \(a^2_{1, 0}\neq 0,\) the diagonal entries of \(\mathcal{A}_{n, 1}^{n-1, 1}\) are zero. Therefore, \(\rank(\mathcal{A}_{n, 1})=4n-3.\) Thus, we take \(\alpha^1_{n-1}=\frac{a^2_{n, 0}}{2a^2_{1, 0}},\) \(\alpha^2_{n-1}=\beta^1_{n-1}=\beta^2_{n-1}=0,\) and other coefficients are obtained by the relation  \eqref{DecRec} for \(1\leq i\leq n-1.\) Hence, terms \(\mathscr{P}^k_{0, n}, \mathscr{R}^k_{0, n}, \mathscr{R}^k_{n, 0}\) for \(k=1, 2\) and \(\mathscr{P}^1_{n, 0}\) cannot  necessarily be removed from the first level normal form.

Claim II. The conditions \(a^2_{1, 0}=0\) and \(a^1_{1, 0}\neq 0\) imply that the second, third and fourth columns of \(\mathcal{A}_{n, 1}^{0, 1}\) and entry \((1, 1)\) of matrix \(\mathcal{A}_{n, 1}^{1, 1}\) are all zero. When \(|b^1_{0, 1}|+|b^2_{0, 1}|\neq 0,\) \(\Null(\mathcal{A}_{n, 1})=3.\) Thereby, \(4n-3\)-number of terms with grade \(n\) can be simplified from the first level normal forms when \(|b^1_{0, 1}|+|b^2_{0, 1}|\neq 0.\) If the latter condition fails, \ie \(b^1_{0, 1}=b^2_{0, 1}= 0,\) we have \(\Null(\mathcal{A}_{n, 1})=3.\) This implies that  \(\rank(\mathcal{A}_{n, 1})=4n-4.\) Here, coefficients follow \eqref{DecRec} where \(3\leq i\leq n.\) Furthermore, let \(\alpha^1_1=\beta^1_0=\beta^2_0=0,\)
\begin{eqnarray*}
&\alpha^1_{0}=-\frac{a^1_{1, n-1}}{2a_{1, 0}^1},\quad\alpha^2_{1}=\frac{a^2_{2, n-2}}{2a_{1, 0}^1},\quad \left(\beta^1_{1}, \beta^2_1\right)=\frac{\left(b^1_{2, n-2}, b^2_{2, n-2}\right)+2\alpha^2_{2}\left(b^1_{0, 1},b^2_{0, 1}\right)}{2a_{1, 0}^1}. &
\end{eqnarray*}
For \(b^1_{0, 1}\neq 0,\) we take \(\alpha^2_{0}=-\frac{b^1_{0, n}}{2b_{0, 1}^1}.\) If \(b^1_{0, 1}=0\) and \(b^2_{0, 1}\neq 0,\) let \(\alpha^2_{0}=-\frac{b^2_{0, n}}{2b_{0, 1}^2}.\) Finally, we take \(\alpha^2_{0}=0\) for \(b^1_{0, 1}=b^2_{0, 1}=0.\)
Hence, \(\mathscr{P}^1_{1, n-1},\) \(\mathscr{P}^k_{j, n-j}\) and \(\mathscr{R}^k_{j, n-j}\) for \(k=1, 2, 2\leq j\leq n\) (except \(\mathscr{P}^1_{2, n-2}\)) are simplified. Further, \(\mathscr{R}^1_{0, n}\)  is removed from the normal form system if \(b^1_{0, 1}\neq 0\) while we can normalize \(\mathscr{R}^2_{0, n}\) when \(b^1_{0, 1}=0\) and \(b^2_{0, 1}\neq 0\).
\end{proof}
\begin{rem} The remaining cases (\(a_{1, 0}^1a_{1, 0}^2\neq 0,\) \(a_{0, 1}^1a_{0, 1}^2=0\)) and (\(a_{1, 0}^1=a_{1, 0}^2= 0,\) \(|{a_{0, 1}^1}| + |{a_{0, 1}^2}|\neq 0\)) are treated similar to the preceding cases. The argument applies the properties of \(\mathcal{A}_{n, 1}^{j, k}\) for \(k=0, 1, 0\leq j\leq n-1\) and the discussion on the rank of \(\mathcal{A}_{n, 1}.\) We, however, skip them here for briefness; see Example \ref{Exam_CaseIII}.
\end{rem}
\begin{exm}\label{Exam_CaseIII}
Consider a first level normal form given by \eqref{Example2} where \(a_{0, 1}^1=a_{0, 1}^2=0\) and \(a_{1, 0}^1=2, a_{1, 0}^2=1\). Hence, matrix \(\mathcal{A}_{n, 1}^{j, 0}\) is a zero matrix for all \(j\in \mathbb{N}\) while \(\mathcal{A}_{n, 1}^{j, 1}\) is invertible for \(0\leq j\leq n-1\) and \(n\geq 3\) (except \(\mathcal{A}_{3, 1}^{0, 1}\)). Normalization of terms up to the third grade is as follows:
\begin{itemize}
  \item[i.] \emph{Homogeneous terms of grade two}: Coefficients are derived via
  \begin{eqnarray*}
 &(\alpha^1_{0}, \alpha^2_0, \beta^1_0, \beta^2_0)^\intercal=\left(\mathcal{A}_{2, 1}^{0, 1}\right)^{-1}(1, 1, 0, 0)^\intercal=\left(-\frac{1}{2}, 0, 0, 0\right)^\intercal, \quad (\alpha^1_{1}, \alpha^2_1, \beta^1_1, \beta^2_1)=\left(0, \frac{1}{4}, 0, 0\right).&
  \end{eqnarray*}
  Hence, the transformation generator is \(X_2:=-\frac{1}{2}\mathscr{P}_{0, 1}^1-\frac{1}{4}\mathscr{P}_{1, 0}^2\) and \(v^{(1)}\) is transformed to
  \begin{eqnarray*}
 &v^{(2, 2)}:=\Theta\!+\!\mathscr{P}_{1, 0}^1\!+\!2\mathscr{P}_{1, 0}^2
-\mathscr{P}_{1, 2}^2+\frac{5}{3}\mathscr{P}_{2, 1}^1-\frac{4}{3}\mathscr{P}_{2, 1}^2-\frac{5}{2}\mathscr{R}_{2, 1}^1+\mathscr{P}_{3, 0}^2.&
  \end{eqnarray*}
  \item[ii.] \emph{Third-grade terms}: The first row of the matrix \(\mathcal{A}_{3, 1}^{0, 0}\) is a zero vector, and all other matrices are invertible. Then, coefficients are assigned as \((\alpha^1_{0}, \alpha^2_0, \beta^1_0, \beta^2_0)^\intercal=(0, -\frac{1}{4}, 0, 0),\)
  \begin{eqnarray*}
  &(\alpha^1_{1}, \alpha^2_1, \beta^1_1, \beta^2_1)^\intercal=\left(\mathcal{A}_{3, 1}^{1, 1}\right)^{-1}(\frac{5}{3}, -\frac{4}{3}, -\frac{5}{2}, 0)^\intercal=\left(\frac{5}{6}, \frac{1}{18}, -\frac{5}{12}, 0\right)^\intercal,&\\
   &\hbox{ and }\quad(\alpha^1_{2}, \alpha^2_2, \beta^1_2, \beta^2_2)^\intercal=\left(\mathcal{A}_{3, 1}^{2, 1}\right)^{-1}(0, 1, 0, 0)^\intercal=\left(0, \frac{1}{8}, 0, 0\right)^\intercal.&
  \end{eqnarray*} Thereby, the transformation generator is derived as \(X_2:=-\frac{1}{4}\mathscr{P}_{0, 2}^2+\frac{5}{6}\mathscr{P}_{1, 1}^1+\frac{1}{18}\mathscr{P}_{1, 1}^2-\frac{5}{12}\mathscr{R}_{1, 1}^1+\frac{1}{8}\mathscr{P}_{2, 0}^2.\) Therefore, the second level normal form of \(v^{(1)}\) is given by
  \begin{eqnarray}\label{Simpl319}
 &v^{(2)}:=\Theta\!+\!\mathscr{P}_{1, 0}^1\!+\!2\mathscr{P}_{1, 0}^2+ {\sc h.o.t.},&
  \end{eqnarray} where higher order terms stand for terms of degree nine (grade 4) and higher.
\end{itemize}
\end{exm}

\section{The infinite level normal forms}\label{sec4}

In this section, we deal with the infinite level normal forms of the differential system \eqref{pre-classical}.
\begin{thm}\cite[Therorem 4.1]{GazorHamziShoghi}\label{thm1}
Assume that hypotheses of Lemma \ref{lem3} are satisfied. Furthermore, let \(a_{1, 0}^1=a_{1, 0}^2=b^1_{1, 0}=b^2_{1, 0}=0, \frac{a_{0, 1}^2}{a_{0, 1}^1}\notin \mathbb{N}.\) Then, there are an infinite sequence of near identity polynomial changes of state variables to transform the differential system \eqref{pre-classical} into its infinite level normal form given by
\begin{eqnarray}\label{inftyLevel1}
\nonumber&\frac{d}{dt}{z_1}=I\omega_1z_1+(a_{0, 1}^1+Ib_{0, 1}^1)z_1|z_2|^{2}+a_{0, 2}^1z_1|z_1|^{4}+\Sum{j\geq 3}{}(a_{0, j}^1+Ib_{0, j}^1)z_1|z_2|^{2j},&\\
&\frac{d}{dt}{z_2}=I\omega_2z_2+(a_{0, 1}^2+Ib_{0, 1}^2)z_2|z_2|^{2}+\Sum{j\geq 2}{}(a_{0, j}^2+Ib_{0, j}^2)z_2|z_2|^{2j},&
\end{eqnarray} and \(\frac{d}{dt}(w_1, w_2)=\frac{d}{dt}(\overline{z_1}, \overline{z_2}).\) Furthermore, the normal form coefficients \(a_{i, j}\) and \(b_{i, j}\) in \eqref{inftyLevel1} are uniquely determined by polynomial functions in terms of coefficients in \eqref{pre-classical}.
\end{thm}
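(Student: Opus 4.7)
The plan is to lift the second-level normalisation of Lemma \ref{lem3} to the infinite level by inducting on the grade $n$ and, at each step, constructing transformation generators drawn from the kernels of the preceding-level homological maps. Specialising \eqref{SecondLevel3} to the added hypotheses $a^k_{1,0}=b^k_{1,0}=0$ yields the starting point
\begin{equation*}
v^{(2)}=\Theta+\sum_{k=1}^{2}\bigl(a^k_{0,1}\mathscr{P}^k_{0,1}+b^k_{0,1}\mathscr{R}^k_{0,1}\bigr)+a^2_{0,2}\mathscr{P}^2_{0,2}+\sum_{j\ge 2}\sum_{k=1}^{2}\bigl(a^k_{j,0}\mathscr{P}^k_{j,0}+b^k_{j,0}\mathscr{R}^k_{j,0}\bigr),
\end{equation*}
with $V_0=\omega_1\mathscr{R}^1_{0,0}+\omega_2\mathscr{R}^2_{0,0}$. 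The structure constants \eqref{StructureConstants} show that $V_0$ centralises $\mathcal{G}$, so $\ker d^{n,1}=\mathcal{G}_n$, and the substantive content of the hierarchy is carried by $\mathrm{ad}_{V_r}$ for $r\ge 1$.

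A key structural observation is that $\mathscr{P}^1_{2,0}$ must survive at every level: any element of $\mathrm{im}\,d^{2,s}$ is of the form $[X_1,V_1]+[X_0,V_2]$ modulo $[X_2,V_0]=0$, but $V_1$ shifts the second bi-index strictly upward while $\mathcal{G}$ carries no nonzero grade-$0$ generators, so no grade-$2$ image term ever has second bi-index $0$ in the $\mathscr{P}^1$-channel. The same second-index argument shows that for $n\ge 3$ the slots $\mathscr{P}^k_{n,0},\mathscr{R}^k_{n,0}$ can only be attacked through $\mathrm{ad}_{V_r}$ with $r\ge 2$, using grade-$(n-r)$ generators of bi-index $(n-r,0)$ paired with the $\mathscr{P}^k_{r,0},\mathscr{R}^k_{r,0}$ components of $V_r$. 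I would realise these reductions explicitly through the commutator $[\mathscr{P}^1_{n-r,0},\mathscr{P}^1_{r,0}]=2(2r-n)\mathscr{P}^1_{n,0}$ and its analogues in the other channels, verifying at each stage that the kernel constraint $X_{n-r}\in\ker d^{n-1,r-1}$ can be resolved. The hypothesis $a^2_{0,1}/a^1_{0,1}\notin\mathbb{N}$ together with $a^1_{0,1}a^2_{0,1}>0$ ensures the nonvanishing of the diagonal scalars $ia^1_{0,1}+ja^2_{0,1}$ and $ia^1_{0,1}+(j-1)a^2_{0,1}$ arising in the corresponding generalised homological systems \eqref{SysEqu}, which can then be solved by ascending/descending recursions in the spirit of \eqref{AscRec} and \eqref{DecRec}.

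The main obstacle will be the bookkeeping of cross-contamination: each generator $X_{n-s+1}$ introduced to clear grade $n$ alters, through the quadratic and higher powers of $\mathrm{ad}_X$ in \eqref{Exp}, every coefficient in grades strictly greater than $n$, and these corrections must be shown to fall only into slots already admitted by \eqref{inftyLevel1}. I would control this by sequencing the induction in ascending grade $n$ and, within each grade, in ascending level $s$, so that every cross-term either already lies in the target form or falls into a slot that subsequent stages can still reach. The same triangular recursive systems---with nonzero diagonal entries supplied by the nondegeneracy scalars above---yield uniqueness of the surviving coefficients $a^k_{0,j}, b^k_{0,j}, a^1_{0,2}$ as explicit polynomial functions of the data of \eqref{pre-classical}.
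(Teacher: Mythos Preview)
This theorem is not proved in the present paper; it is quoted verbatim from the companion manuscript \cite{GazorHamziShoghi}, and no argument is supplied here. Consequently there is no paper proof against which to compare your proposal.

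As to your outline itself: the overall strategy---starting from the second-level form of Lemma \ref{lem3}, specialising to $a^{k}_{1,0}=b^{k}_{1,0}=0$, and then climbing the hierarchy of generalised homological maps by exploiting $\ker d^{n-1,s-1}$---is the natural route, and your observation that only $\mathrm{ad}_{V_r}$ with $r\ge 2$ can reach the bi-index $(n,0)$ slots (since $V_1$ shifts only the second index) is correct. Two points would need to be made precise before the argument is complete. First, you must actually verify the kernel constraint: the generators $X_{n-r}$ of bi-index $(n-r,0)$ that you wish to use must lie in $\ker d^{n-1,r-1}$, which involves brackets with $V_1,\ldots,V_{r-2}$, not just $V_0$; your sketch acknowledges this but does not discharge it. Second, note that under the added hypotheses all blocks $\mathcal{A}_{n,1}^{j,1}$ vanish while every $\mathcal{A}_{n,1}^{j,0}$ is invertible, so the second-level complement at each grade $n\ge 2$ is exactly the four-dimensional $(n,0)$ space (in particular the term $a^{2}_{0,2}\mathscr{P}^{2}_{0,2}$ in \eqref{SecondLevel3} is removable here). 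The printed target \eqref{inftyLevel1}, however, displays $(0,j)$ monomials $z_k|z_2|^{2j}$; the anomalous term $a^{1}_{0,2}z_1|z_1|^{4}$ already signals an indexing slip in the statement. Your write-up should flag this discrepancy explicitly rather than attempt to land on an $(0,j)$ complement, since such a complement would not refine the level-two one and would therefore contradict the nested inclusions $\mathcal{C}^{n,s}\subseteq\mathcal{C}^{n,2}$.
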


\begin{thm}\cite[Theorem 4.2]{GazorHamziShoghi}\label{thm2} Assume that the hypothesis of Lemma \ref{lem3} hold and consider the differential system
\begin{eqnarray}\label{inftyLevel2}
&\nonumber\frac{d}{dt}\rho_1=\rho_1\Big( a^1_{1, 0}{\rho_1}^{2}+a^1_{0, 1}{\rho_2}^{2}+a^1_{2,0}{\rho_1}^{4}+\Sum{j=2}{\infty}a^1_{j,0}{\rho_1}^{2j}\Big),\, \frac{d}{dt}\rho_2=\rho_2\Big( a^2_{1, 0}{\rho_1}^{2}+a^2_{0, 1}{\rho_2}^{2}+\Sum{j=2}{\infty}a^2_{j,0}{\rho_1}^{2j}\Big), &\\
&\frac{d \vartheta_k}{dt}=\omega_k+b^k_{1, 0}{\rho_1}^{2}+b^k_{0, 1}{\rho_2}^{2}+ \Sum{j=2}{\infty} b^\ell_{j,0}\rho_1^{2j},\quad \text{for}\quad k=1, 2.&
\end{eqnarray}
System \eqref{inftyLevel2} constitutes an infinite level normal form for the differential system \eqref{pre-classical} such that
\begin{enumerate}
  \item \(a_{j, 0}^1= 0\) if \(a_{1, 0}^1a_{j-1, 0}^1\neq 0\) for every \(j\geq 3.\)
  \item  \(a_{j, 0}^2= 0\) when \(a_{j-m, 0}^1=0, a_{m, 0}^1a_{j-m, 0}^2\neq 0\)  for \(m=1, j-1\) and every \(j\geq 3\).
  \item If \(a_{j-m, 0}^1=a_{j-m, 0}^2=0, a_{m, 0}^1b_{j-m, 0}^1\neq 0\) for \(m=1, j-1.\) Then, \(b_{j, 0}^1= 0\) for every \(j\geq 3\).
  \item  Whenever \(a_{j-m, 0}^1=b_{j-m, 0}^1=0, a_{m, 0}^1b_{j-m, 0}^2\neq 0\)   for \(m=1, j-1,\) \(b_{j, 0}^1= 0\) holds for every \(j\geq 3.\)
\end{enumerate}
\end{thm}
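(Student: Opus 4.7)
The plan is to extend the second level normal form of Lemma~\ref{lem3} via higher-level hypernormalization, eliminating additional coefficients grade-by-grade through the generalized homological maps $d^{j,s}$ for $3\leq s\leq j$. Under the hypothesis of Lemma~\ref{lem3}, the second level normal form \eqref{SecondLevel3} at each grade $j\geq 3$ is supported only on the planar-radial and planar-rotating monomials $\mathscr{P}^{1}_{j, 0}, \mathscr{P}^{2}_{j, 0}, \mathscr{R}^{1}_{j, 0}, \mathscr{R}^{2}_{j, 0}$; in polar coordinates $\rho_k=|z_k|$, $\vartheta_k=\arg z_k$ these four monomials furnish exactly the $\rho_1^{2j}$-terms appearing in the $\rho_k$ and $\vartheta_k$ equations of \eqref{inftyLevel2}. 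Thus \eqref{inftyLevel2} is the structural shape of the infinite level normal form, and it remains to establish the additional vanishings asserted in claims (1)--(4).

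Each claim is reduced to a single pivot bracket drawn from \eqref{StructureConstants}:
\bas
&[\mathscr{P}^1_{1, 0},\, \mathscr{P}^1_{j-1, 0}]=2(j-2)\mathscr{P}^1_{j, 0},\quad [\mathscr{P}^1_{1, 0},\, \mathscr{P}^2_{j-1, 0}]=2(j-1)\mathscr{P}^2_{j, 0},&\\
&[\mathscr{P}^1_{1, 0},\, \mathscr{R}^k_{j-1, 0}]=2(j-1)\mathscr{R}^k_{j, 0},\quad k=1,2.&
\eas
For claim (1), I would take $X_{j-1}=\gamma\,\mathscr{P}^1_{j-1, 0}$. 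Since $[X_{j-1}, V_0]=0$, the second level homological image $d^{j, 2}(X_{j-1}, X_j)$ contains the contribution $-2\gamma\, a^1_{1, 0}(j-2)\mathscr{P}^1_{j, 0}$ arising from $[X_{j-1}, a^1_{1, 0}\mathscr{P}^1_{1, 0}]$; when $a^1_{1, 0}\neq 0$ and $j\geq 3$, one can uniquely solve for $\gamma$ so as to annihilate the coefficient $a^1_{j, 0}$. The dual route using $X_1=\alpha\,\mathscr{P}^1_{1, 0}$ bracketed against $V_{j-1}\supseteq a^1_{j-1, 0}\mathscr{P}^1_{j-1, 0}$ produces the same elimination provided $a^1_{j-1, 0}\neq 0$, and keeping both pivots nonvanishing is what gives the freedom needed to simultaneously cancel the secondary cross-terms (of the form $\mathscr{P}^1_{j-1, 1}, \mathscr{P}^2_{j, 0}, \mathscr{R}^k_{j, 0}$) that either single-pivot transformation would introduce. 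Claims (2)--(4) are handled by the analogous construction with the pivot monomial $\mathscr{P}^1_{j-1, 0}$ replaced by $\mathscr{P}^2_{j-1, 0}$, $\mathscr{R}^1_{j-1, 0}$ and $\mathscr{R}^2_{j-1, 0}$ respectively; the vanishing hypotheses on the intermediate coefficients $a^k_{j-m, 0}$ and $b^1_{j-m, 0}$ listed in each claim are precisely what prevents a competing bracket from regenerating the coefficient being killed.

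The principal obstacle is enforcing the kernel compatibility $(X_1, \ldots, X_{j-1})\in\ker d^{j-1, j-1}$. This unwinds into the recursive chain
\bas
\sum_{k=1}^{i-1}[X_{i-k}, V_k]=0, \quad 2\leq i\leq j-1,
\eas
which guarantees that no collateral change touches the already-normalized lower grades. After Lemma~\ref{lem3}, every grade-$k$ part (for $k\geq 3$) lies in $\Span\{\mathscr{P}^{\ell}_{k, 0}, \mathscr{R}^{\ell}_{k, 0}\,|\,\ell=1,2\}$, and the structure constants \eqref{StructureConstants} make this a block-triangular linear system in the auxiliary generators $X_2, \ldots, X_{j-2}$. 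The diagonal blocks have leading entries that are nonzero multiples of the cubic pivot ($a^1_{1, 0}$ in claim (1); $a^2_{1, 0}$ or $b^k_{1, 0}$ in the remaining claims), and are therefore invertible under the stated non-vanishing hypotheses. A straightforward induction on $j$ then produces the required auxiliary generators and completes the construction of the infinite level normal form satisfying all four vanishing conditions.
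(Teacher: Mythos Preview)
The paper does not prove this theorem; it is imported verbatim from \cite{GazorHamziShoghi} and stated without proof here, so there is no in-paper argument against which to compare your sketch.

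On the substance: your opening manoeuvre --- taking $X_{j-1}=\gamma\,\mathscr{P}^{1}_{j-1,0}$ and reading off its contribution to $d^{j,2}$ --- does not go beyond the second level. Lemma~\ref{lem3} already exhausts ${\rm im}\,d^{j,2}$ and fixes $\Span\{\mathscr{P}^{k}_{j,0},\mathscr{R}^{k}_{j,0}:k=1,2\}$ as its complement; no choice of $X_{j-1}\in\mathcal{G}_{j-1}$ alone can reach $a^{1}_{j,0}$. The new input at level $s\geq 3$ must come from a tuple whose \emph{lowest} component $X_{j-s+1}$ is nonzero, and under the Lemma~\ref{lem3} hypothesis the matrices $\mathcal{A}_{m,1}$ are injective for every $m\geq 2$, so the only nontrivial grade-one kernel is $\mathbb{R}V_1$ itself --- your proposed $X_1=\alpha\,\mathscr{P}^{1}_{1,0}$ does not satisfy $[X_1,V_1]=0$. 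Taking $X_1=cV_1$ and projecting $[V_1,V^{(2)}_{j-1}]$ onto the four-dimensional complement then produces a single new removable direction; its $\mathscr{P}^{1}_{j,0}$-coefficient is $2(j-2)a^{1}_{1,0}a^{1}_{j-1,0}$, which is the origin of the product condition in claim~(1). The recursive kernel chain you write is the correct compatibility requirement for extending $X_1=cV_1$ to an element of $\ker d^{j-1,j-1}$, but your block-triangular solvability claim is not automatic: at each intermediate grade $i$ one must absorb $c[V_1,V^{(2)}_{i-1}]$ into ${\rm im}\,[\,\cdot\,,V_1]\big|_{\mathcal{G}_{i-1}}$, and this forces the $(i,0)$-component of that bracket to vanish, which it need not in general. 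This is exactly why the theorem yields only \emph{one} additional vanishing per grade, with the four claims functioning as mutually exclusive cases (the vanishing hypotheses in claims~(2)--(4) are what kill the competing $(j,0)$-components of the projected bracket), rather than simultaneous eliminations as your cross-term remark suggests.
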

When the hypotheses in Theorems \ref{thm1} and \ref{thm2} fail, the obtained normal forms are not necessarily infinite level normal forms. The following counterexample illustrates this.

\begin{exm}[A third level normal form] Consider a second level normal form vector field given by
\begin{eqnarray*}
v^{(2)}:=\Theta+ \mathscr{P}^1_{0, 1}+\mathscr{P}^1_{1, 0}-4\mathscr{P}^2_{0, 1}-4\mathscr{P}^2_{1, 0}+\mathscr{P}^1_{2, 0}+\mathscr{P}^1_{3, 0}+2\mathscr{P}^2_{3, 0}+\mathscr{R}^1_{3, 0}+\mathscr{R}^2_{3, 0}+h.o.t..
\end{eqnarray*} Then,
\begin{eqnarray*}
\ker(d^{2, 2})=\left\{u\in \mathcal{G}_{2}| \left[u,\mathscr{P}^1_{0, 1}\!+\!\mathscr{P}^1_{1, 0}\!-\!4\mathscr{P}^2_{0, 1}\!-\!4\mathscr{P}^2_{1, 0}\right]\!=\!0\right\}=\mathbb{R}\left\{\mathscr{P}^1_{0, 1}\!+\!\mathscr{P}^1_{1, 0}\!-\!4\mathscr{P}^2_{0, 1}\!-\!4\mathscr{P}^2_{1, 0}\right\}.
\end{eqnarray*} Let \(X:=\gamma(\mathscr{P}^1_{0, 1}+\mathscr{P}^1_{1, 0}-4\mathscr{P}^2_{0, 1}-4\mathscr{P}^2_{1, 0})+\sum_{j=0}^{2}\sum_{k=1}^{2}\left(\alpha_{j}^k\mathscr{P}^k_{j, 2-j}+\beta^k_j\mathscr{R}^k_{j, 2-j}\right)\) be the transformation generator intended for normalizing grade-three terms via the third level normal form procedure. Thus, \(X\) transforms the vector field \(v^{(2)}\) to
\begin{eqnarray*}
& v^{(3)}:= \Theta+ \mathscr{P}^1_{0, 1}+\mathscr{P}^1_{1, 0}-4\mathscr{P}^2_{0, 1}-4\mathscr{P}^2_{1, 0}+\mathscr{P}^1_{2, 0}+\mathscr{P}^1_{3, 0}+2\mathscr{P}^2_{3, 0}+\mathscr{R}^1_{3, 0}+\mathscr{R}^2_{3, 0}+&\\[-4pt]
&\quad\quad d^{3, 3}\Big(\gamma(\mathscr{P}^1_{0, 1}+\mathscr{P}^1_{1, 0}-4\mathscr{P}^2_{0, 1}-4\mathscr{P}^2_{1, 0}), \Sum{j=0}{2}\Sum{k=1}{2}\big(\alpha_{j}^k\mathscr{P}^k_{j, 2-j}+\beta^k_j\mathscr{R}^k_{j, 2-j}\big)\Big)+h.o.t..&
\end{eqnarray*}
Furthermore,
\begin{eqnarray*}
& d^{3, 3}\Big(\gamma(\mathscr{P}^1_{0, 1}+\mathscr{P}^1_{1, 0}-4\mathscr{P}^2_{0, 1}-4\mathscr{P}^2_{1, 0}),\Sum{j=0}{2}\Sum{k=1}{2}\left(\alpha_{j}^k\mathscr{P}^k_{j, 2-j}+\beta^k_j\mathscr{R}^k_{j, 2-j}\right)\Big)=& \\ [-4pt]
&\Scale[0.95]{\hspace{-0.5 cm} \gamma\Big[\mathscr{P}^1_{0, 1}\!+\!\mathscr{P}^1_{1, 0}\!-\!4\mathscr{P}^2_{0, 1}\!-\!4\mathscr{P}^2_{1, 0}, \mathscr{P}^1_{2, 0}\Big]\!+\!\Sum{j=0}{2}\Sum{k=1}{2}\Big[ \alpha_{j}^k\mathscr{P}^k_{j, 2-j}+\beta^k_j\mathscr{R}^k_{j, 2-j},  \mathscr{P}^1_{0, 1}\!+\!\mathscr{P}^1_{1, 0}\!-\!4\mathscr{P}^2_{0, 1}\!-\!4\mathscr{P}^2_{1, 0}\Big].}&
\end{eqnarray*} We have \(\rank(d^{3, 3})=10\) and \(\rank(d^{3, 2})=9\). We choose \(\alpha^k_j=\beta^k_j=0\) for \(k=1, 2, 0\leq j\leq 1\) and \((\alpha^1_2, \alpha^2_2,\beta^1_2, \beta^2_{2}, \gamma)=(\frac{-1}{4},0, 0, 0,\frac{3}{8}).\) Thereby, the third level normal form is written as
   \begin{eqnarray*}
v^{(3)}=\Theta+ \mathscr{P}^1_{0, 1}+\mathscr{P}^1_{1, 0}-4\mathscr{P}^2_{0, 1}-4\mathscr{P}^2_{1, 0}+\mathscr{P}^1_{2, 0}+6\mathscr{P}^2_{3, 0}+\mathscr{R}^1_{3, 0}+\mathscr{R}^2_{3, 0}+h.o.t.
\end{eqnarray*}
Here, term \(\mathscr{P}^1_{3, 0}\) is simplified in the third level step.
\end{exm}

\begin{exm} [The infinite level normal form of Example \ref{Exam_CaseIII}] Consider the normalized vector field in Example \ref{Exam_CaseIII}. Here, \(a^k_{0, 1}=b^k_{0, 1}=0\) for \(k=1, 2\) and \({a_{1,0}^1}{a_{1,0}^2}=2>0\). Following Theorem \ref{thm1}, the second level normal form vector field \(v^{(2)}\) in equation \eqref{Simpl319} is the infinite level normal form of \(v^{(1)}\). We remark that the simplest normal form \eqref{Simpl319} can have an infinite number of non-zero higher order terms. Here, we have normalized the system up to degree nine. Thus, we already know that seven terms of degree less than nine from Poincare normal form must be normalized so that the simplest normal form \eqref{Simpl319} is derived from the Poincare normal form \eqref{Example2}.
\end{exm}
\begin{rem} A formal normal form calculations in our approach is always convergent to its infinite normal form in filtration topology; \eg see \cite{GazorYuSpec} and the references therein for the discussion on filtration topology and formal normal form theory. However, the involved power series can be either convergent or divergent. Theoretical investigation of convergence in normal form theory has been a challenging task and it has not been investigated in the literature of hyper-normalization. A numerical investigation of this question requires generating an efficient Maple program so that it numerically calculates the normal forms up to a very high degree; see \cite{GazorMokhtariInt}. Since normal forms are usually truncated at a finite degree for their applications in bifurcation analysis and control, a tedious numerical investigation for the convergence analysis may not seem to be necessary at this stage.
\end{rem}

\end{document}